\numberwithin{equation}{section}
\newtheorem{thm}{Theorem}[section]
\newtheorem{prop}[thm]{Proposition} 
\newtheorem{lem}[thm]{Lemma}
\newtheorem{cor}[thm]{Corollary}
\theoremstyle{definition}
\newtheorem{dfn}[thm]{Definition}
\newtheorem{exmpl}[thm]{Example}
\newtheorem{?}[thm]{Question}
\theoremstyle{remark}
\newtheorem{rmk}[thm]{Remark}
\newcommand{\ds}{\displaystyle}
\newcommand{\tql}{\textquotedblleft}
\newcommand{\tqr}{\textquotedblright}
\newcommand{\noin}{\noindent}
\newcommand{\mc}{\mathcal}
\newcommand{\mb}{\mathbb}
\newcommand{\mbf}{\mathbf}
\newcommand{\ms}{\mathscr}
\newcommand{\fvna}{\textbf{fvNa}}
\newcommand{\rrz}{RR0\;}
\begin{document}
\title{Some results on tracial stability and graph products}

\author{Scott Atkinson}

\address{Vanderbilt University, Nashville, TN, USA}
\email{scott.a.atkinson@vanderbilt.edu}

\begin{abstract}
We establish the tracial stability of a certain class of graph products of $C^*$-algebras.  This result involves the development of the \tql pincushion class\tqr of finite graphs. We then apply this result in two ways. The first application yields a selective version of Lin's Theorem for almost commuting operators. The second application addresses some approximation properties of right-angled Artin groups.  In particular, we show that the full $C^*$-algebra of any right-angled Artin group is quasidiagonal and thus has a non-trivial amenable trace, and then we apply tracial stability to show when these amenable traces are in fact locally finite dimensional. 

\smallskip

\noindent\textbf{Keywords.} tracial stability, group stability, graph products, right-angled Artin groups, quasidiagonality
\end{abstract}

\maketitle

\section*{Introduction}

The notion of stability for groups has been of significant interest in recent years.  Stability describes in a precise fashion the situation when maps on a group that approximate a homomorphism are in fact near honest homomorphisms. Thom's ICM survey \cite{thom} sheds light on various group-theoretic results, applications, and questions related to this growing subject.  Stability for groups is inextricably related to the notions of soficity and hyperlinearity. Some notable group-theoretic references include \cite{arzpau, hadshu2, deglluth, beluth, beclub}. 

This article is concerned with an analogous notion of stability for $C^*$-algebras called \emph{tracial stability} introduced by Hadwin-Shulman in \cite{hadshu}--see \S\S\ref{ts} for precise definitions and examples. In particular, we consider the question of when tracial stability is preserved by the operation of taking a graph product.  A graph product is a simultaneous generalization of free products and direct/tensor products--see \S\S \ref{gp}. We introduce the \emph{pincushion class} (cf. Definition \ref{pcdef}) of finite graphs and show in Theorem \ref{pcts} that under suitable conditions, graph products corresponding to these graphs preserve tracial stability.  This result introduces a wide class of new examples of tracially stable algebras (and stable groups) and yields two interesting applications.



In Section \ref{sellin} we apply Theorem \ref{pcts} to obtain a selective version of Lin's Theorem.  Lin's Theorem (\cite{lin2}) says that with respect to the operator norm, a pair of almost commuting self-adjoint matrices is near a pair of commuting self-adjoint matrices where the estimates can be made independent of dimension.  This shows that the condition that a pair of self-adjoints commute is stable with respect to matrices and the operator norm.  The context of Lin's Theorem has been the subject of intense study for a long time--in fact, decades preceding Lin's solution.  Mathematicians have considered many variations of this statement--some true and some false.  There is a massive amount of literature on this topic; so we only briefly give an incomplete list of relevant references: \cite{halmos, voicheis, voiccomm, davicomm, choicomm, loringcomm1, exelor, friror, hadwin, loringcomm2, filkac, glebsky, filsaf, arzpau, kacsaf, elegra}.  See \S \ref{sellin} for more discussion on Lin's Theorem. Theorem \ref{mr} gives a positive variation of Lin's Theorem for selective commuting relations with respect to the normalized Hilbert-Schmidt norm. 

In Section \ref{app} we apply Theorem \ref{pcts} to establish an approximation property for certain right-angled Artin groups--see Definition \ref{raagdfn}.  In particular, we will first show that all right-angled Artin groups have quasidiagonal full group $C^*$-algebras.  This fact guarantees the existence of (non-trivial) amenable traces (cf. Definition \ref{amendef}) on these algebras.  Our result then shows that for a right-angled Artin group $A$ coming from a graph in the pincushion class, all amenable traces on $C^*(A)$ are in fact locally finite dimensional (cf. Definition \ref{lfddef}).


\subsection*{Acknowledgments}

The author would like to thank Ben Hayes, Adrian Ioana, Mehrdad Kalantar, Jesse Peterson, David Sherman, and Pieter Spaas for helpful comments and conversations about these results.

\section{Preliminaries}\label{pre}

\subsection{Tracial stability}\label{ts} Let $\mc{U}$ denote a free ultrafilter on $\mb{N}$.  If for each $k \in \mb{N}$  $\mc{A}_k$ is a unital $C^*$-algebra and $\tau_k$ is a tracial state on $\mc{A}_k$, we let $(\mc{A}_k,\tau_k)^\mc{U}$ denote the tracial ultraproduct of the $\mc{A}_k$'s with respect to the $\tau_k$'s. Let $a_k \in \mc{A}_k$ for each $k \in \mb{N}$; then we let $(a_k)_\mc{U} \in (\mc{A}_k, \tau_k)^\mc{U}$ denote the coset of the sequence $(a_k)_{k \in \mb{N}}$ in $\ds \prod_{k \in \mb{N}} \mc{A}_k$. We will sometimes suppress the $\tau_k$ notation when the context is clear. See Appendix A of \cite{brownozawa} for an introduction to ultrafilters and ultraproducts.  

\begin{dfn}[\cite{hadshu}]
Let $\mc{A}, \mc{A}_k$ be unital $C^*$-algebras, and for each $k \in \mb{N}$ let $\tau_k$ be a tracial state on $\mc{A}_k$.  A unital $*$-homomorphism $\pi: \mc{A} \rightarrow (\mc{A}_k,\tau_k)^\mc{U}$ is \emph{approximately liftable} if there is a set $E \in \mc{U}$ such that for each $k \in E$ there is a unital $*$-homomorphism $\pi_k: \mc{A} \rightarrow \mc{A}_k$ such that $\pi(a) = (\pi_k(a))_\mc{U}$ for every $a \in \mc{A}$ where $\pi_k(a) = 0$ for $k \notin E$.  Such a sequence $\left\{\pi_k\right\}$ is called a \emph{lifting} of $\pi$.
\end{dfn}

\noin The following is a general lemma about approximate liftings that will be a valuable utility.  It first appeared as Lemma 2.2 in \cite{hadshu}.

\begin{lem}[\cite{hadshu}]\label{techlemma}
Let $\mc{A}$ be a separable unital $C^*$-algebra with generators $\left\{b_j\right\}_{j \in \mb{N}}$.  For each $k \in \mb{N}$, let $\mc{A}_k$ be a unital $C^*$-algebra and $\tau_k$ be a tracial state on $\mc{A}_k$.  A $*$-homomorphism $\pi: \mc{A} \rightarrow (\mc{A}_k,\tau_k)^\mc{U}$ with $\pi(b_j) = (b_j(k))_\mc{U}$ is approximately liftable if and only if for every $\varepsilon > 0$ and every $N \in \mb{N}$, there is a set $E \in \mc{U}$ with the property that for every $k \in E$ there is a unital $*$-homomorphism $\pi_k: \mc{A} \rightarrow \mc{A}_k$ such that for every $1 \leq j \leq N$ and every $k \in E$, \[||\pi_k(b_j) - b_j(k)||_{2,\tau_k} < \varepsilon\] where $||\cdot ||_{2,\tau_k}$ denotes the Hilbert-Schmidt norm coming from the trace $\tau_k$. 
\end{lem}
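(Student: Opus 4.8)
The plan is to prove both implications, the nontrivial content residing in the ``if'' direction, which is a diagonalization over the free ultrafilter $\mc{U}$.

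For the forward direction, suppose $\pi$ is approximately liftable with lifting $\{\pi_k\}$ on some $E \in \mc{U}$. Since $\pi(b_j) = (\pi_k(b_j))_\mc{U}$ and also $\pi(b_j) = (b_j(k))_\mc{U}$ by hypothesis, the defining property of the tracial ultraproduct gives $\lim_{k \to \mc{U}} ||\pi_k(b_j) - b_j(k)||_{2,\tau_k} = 0$ for each $j$. Fixing $\varepsilon > 0$ and $N$, each set $E_j := \{k : ||\pi_k(b_j) - b_j(k)||_{2,\tau_k} < \varepsilon\}$ lies in $\mc{U}$ for $1 \le j \le N$; intersecting these finitely many sets together with $E$ (a filter is closed under finite intersections) yields the desired $E' \in \mc{U}$, and the restricted maps $\pi_k|_{E'}$ do the job.

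For the reverse direction, I would run a diagonal argument. Applying the hypothesis to $\varepsilon_m = 1/m$ and $N_m = m$ produces, for each $m$, a set $E_m \in \mc{U}$ and unital $*$-homomorphisms $\pi_k^{(m)}$ (for $k \in E_m$) with $||\pi_k^{(m)}(b_j) - b_j(k)||_{2,\tau_k} < 1/m$ for all $1 \le j \le m$. Replacing $E_m$ by $E_1 \cap \cdots \cap E_m$, I may assume the $E_m$ are decreasing. I then define $m(k) = \max\{m \le k : k \in E_m\}$ (setting $\rho_k = 0$ when $k \notin E_1$) and $\rho_k = \pi_k^{(m(k))}$ otherwise. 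The key observation is that, because $\mc{U}$ is free and hence contains every cofinite set, for each $M$ one has $\{k : m(k) \ge M\} \supseteq E_M \cap \{k \ge M\} \in \mc{U}$, so $m(k) \to \infty$ along $\mc{U}$. Consequently, for fixed $j$ and $\varepsilon > 0$, choosing $M \ge \max\{j, 1/\varepsilon\}$ yields $||\rho_k(b_j) - b_j(k)||_{2,\tau_k} < 1/m(k) \le 1/M \le \varepsilon$ on the $\mc{U}$-large set $\{m(k) \ge M\}$, whence $(\rho_k(b_j))_\mc{U} = (b_j(k))_\mc{U} = \pi(b_j)$.

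It remains to upgrade agreement on the generators to agreement on all of $\mc{A}$, which is where I expect the only conceptual subtlety. Since each $\rho_k$ is a unital $*$-homomorphism it is contractive, so $a \mapsto (\rho_k(a))_\mc{U}$ is a well-defined unital $*$-homomorphism $\mc{A} \to (\mc{A}_k, \tau_k)^\mc{U}$. Both this map and $\pi$ are continuous $*$-homomorphisms agreeing on the generating set $\{b_j\}$, hence on the dense $*$-subalgebra it generates, and therefore everywhere. Thus $\{\rho_k\}$ is a lifting of $\pi$ on $E_1 \in \mc{U}$, establishing approximate liftability. The main obstacle is organizing the diagonalization so that a single choice of homomorphisms $\rho_k$ simultaneously handles all generators and all tolerances; the freeness of $\mc{U}$ is exactly what forces the index function $m(k)$ to tend to infinity along the ultrafilter.
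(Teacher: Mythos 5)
Your proof is correct; the paper itself states this lemma without proof (citing Lemma 2.2 of Hadwin--Shulman), and your argument is the standard one: the forward direction by closure of $\mc{U}$ under finite intersections, and the reverse by a diagonalization in which freeness of $\mc{U}$ forces the index function $m(k)$ to tend to infinity along the ultrafilter, followed by the automatic-continuity upgrade from generators to all of $\mc{A}$. Nothing is missing.
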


\begin{dfn}[\cite{hadshu}]
Let $\ms{C}$ denote a class of unital $C^*$-algebras closed under $*$-isomorphism.  A unital $C^*$-algebra $\mc{A}$ is \emph{$\ms{C}$-tracially stable} if every unital $*$-homomorphism $\pi: \mc{A} \rightarrow (\mc{A}_k, \tau_k)^\mc{U}$ is approximately liftable whenever $\mc{A}_k \in \ms{C}$ and $\tau_k$ is a tracial state on $\mc{A}_k$ for every $k \in \mb{N}$.
\end{dfn}


Next, we discuss some results that give some examples of tracially stable $C^*$-algebras.  Recall that a $C^*$-algebra is \emph{real rank zero} (RR0) if its self-adjoint elements can be approximated by self-adjoint elements with finite spectrum.

\begin{thm}[\cite{hadshu}]\label{commts}
Let $\ms{C}$ be a class of \rrz $C^*$-algebras. Then any separable unital commutative $C^*$-algebra $C(X)$ is $\ms{C}$-tracially stable. 
\end{thm}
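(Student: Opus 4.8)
The plan is to invoke Lemma \ref{techlemma} to reduce the problem to approximating finitely many generators in Hilbert–Schmidt norm, and then to manufacture the approximating homomorphisms out of genuine orthogonal projections supplied by the real rank zero hypothesis. Since $C(X)$ is separable, $X$ is compact metric, and I take as generators the self-adjoint restrictions $b_1, b_2, \dots$ of the coordinate functions under an embedding $X \hookrightarrow [0,1]^{\mb{N}}$. Fix $\varepsilon > 0$ and $N \in \mb{N}$, and write $M = (\mc{A}_k, \tau_k)^{\mc{U}}$, which is a finite von Neumann algebra with faithful normal trace $\tau = \lim_{\mc{U}} \tau_k$. The state $\tau \circ \pi$ on $C(X)$ is integration against a Radon probability measure $\mu$ on $X$, so $\pi$ extends to a normal unital $*$-homomorphism $\bar\pi : L^\infty(X,\mu) \to M$. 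Using uniform continuity of $b_1, \dots, b_N$, choose $\rho > 0$ with $d(x,y) < \rho \Rightarrow |b_j(x) - b_j(y)| < \varepsilon/2$ for all $j \le N$, partition $X$ into finitely many nonempty Borel sets $B_1, \dots, B_m$ of diameter $< \rho$, and pick $x_i \in B_i$. Setting $P_i = \bar\pi(\mathbf{1}_{B_i})$ gives mutually orthogonal projections in $M$ summing to $1$, and since $\|b_j - \sum_i b_j(x_i)\mathbf{1}_{B_i}\|_{L^\infty(\mu)} < \varepsilon/2$, applying $\bar\pi$ yields $\|\pi(b_j) - \sum_i b_j(x_i) P_i\|_2 < \varepsilon/2$ for every $j \le N$.

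The next step is to reduce the lifting of the whole orthogonal family $\{P_i\}$ to the lifting of a single self-adjoint element. Pick distinct reals $c_1, \dots, c_m$ and set $h = \sum_i c_i P_i \in M$, a self-adjoint element of finite spectrum with $P_i = \psi_i(h)$ for a continuous bump function $\psi_i$ equal to $1$ near $c_i$ and $0$ near the remaining $c_j$. Lift $h$ to a uniformly bounded self-adjoint sequence $(h_k)$. Here is the one point where real rank zero is essential: for each $k$ I replace $h_k$ by a self-adjoint element of finite spectrum within operator norm $1/k$ of it, which is possible precisely because $\mc{A}_k$ is \rrz, and the new sequence still represents $h$ in $M$. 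Choosing intervals $I_1, \dots, I_m$ partitioning $\mb{R}$ with $c_i \in \mathrm{int}\, I_i$, I define $p_i(k) = \chi_{I_i}(h_k) \in \mc{A}_k$; because $h_k$ has finite spectrum, each $\chi_{I_i}(h_k)$ is an honest sum of eigenprojections, so the $p_i(k)$ are mutually orthogonal projections summing to $1$.

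The crux is the identity $(p_i(k))_{\mc{U}} = P_i$. The spectral distributions $\mu_{h_k}$ converge weakly along $\mc{U}$ to the atomic measure $\mu_h = \sum_i \tau(P_i)\,\delta_{c_i}$, since $\lim_{\mc{U}} \tau_k(g(h_k)) = \tau(g(h))$ for continuous $g$; as $\chi_{I_i} - \psi_i$ vanishes on a neighborhood of every atom $c_j$, a portmanteau estimate gives $\lim_{\mc{U}} \|\chi_{I_i}(h_k) - \psi_i(h_k)\|_{2,\tau_k} = 0$, and combined with $(\psi_i(h_k))_{\mc{U}} = \psi_i(h) = P_i$ this proves the claim. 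I then define the unital $*$-homomorphism $\pi_k : C(X) \to \mc{A}_k$ by $\pi_k(f) = \sum_i f(x_i)\, p_i(k)$, which factors as $C(X) \to \mb{C}^m \to \mc{A}_k$ through evaluation at the $x_i$ followed by the partition of unity $\{p_i(k)\}$. Consequently $(\pi_k(b_j))_{\mc{U}} = \sum_i b_j(x_i) P_i$, so $\lim_{\mc{U}} \|\pi_k(b_j) - b_j(k)\|_{2,\tau_k} = \|\sum_i b_j(x_i) P_i - \pi(b_j)\|_2 < \varepsilon/2 < \varepsilon$, and intersecting the resulting sets over $j \le N$ produces the set $E \in \mc{U}$ demanded by Lemma \ref{techlemma}.

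I expect the genuinely delicate point to be the lifting step: the von Neumann ultraproduct $M$ controls its elements only in $\|\cdot\|_2$, so the exact finite spectrum of $h$ in $M$ does \emph{not} transfer to operator-norm spectral information about the representatives $h_k$, and all that survives is weak convergence of the spectral distributions. Real rank zero is exactly the device that converts this soft $L^2$ information into genuine orthogonal projections inside each $\mc{A}_k$. The care required is to produce projections that are \emph{simultaneously} orthogonal, sum to $1$, and are $\|\cdot\|_2$-close to the intended targets $P_i$; routing everything through the single self-adjoint element $h$ — rather than lifting the $P_i$ one at a time and then trying to orthogonalize — is what makes this automatic.
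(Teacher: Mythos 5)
Your proof is correct: the reduction via Lemma \ref{techlemma} to finitely many generators, the passage to a finite Borel partition of unity $\{P_i\}$ in the (von Neumann algebra) tracial ultraproduct, and the lifting of that partition through a single finite-spectrum self-adjoint element supplied by the real rank zero hypothesis all hold up, including the portmanteau step showing $(\chi_{I_i}(h_k))_{\mc{U}}=P_i$. The paper itself states Theorem \ref{commts} as a citation from \cite{hadshu} and gives no proof, but your argument is essentially the Hadwin--Shulman one, so there is nothing further to compare.
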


Let \textbf{II}$_1$ denote the class of II$_1$-factor von Neumann algebras. Hadwin-Shulman studied the tracial stability of finitely presented groups in \cite{hadshu2}; the main result of that paper is the following theorem.

\begin{thm}[\cite{hadshu2}]
If $G$ is a one-relator group with nontrivial center, then $C^*(G)$ is \textbf{II$_1$}-tracially stable.
\end{thm}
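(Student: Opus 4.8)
The plan is to verify the approximate-liftability criterion of Lemma \ref{techlemma}, exploiting the rigid structure that a nontrivial center forces on a one-relator group. If $G$ is cyclic then $C^*(G)$ is commutative and the claim is immediate from Theorem \ref{commts}, since every von Neumann algebra---in particular every \textbf{II}$_1$ factor---is real rank zero; so I assume $G$ is non-cyclic. By the classical structure theory of one-relator groups with nontrivial center (Murasugi, Pietrowski), such a $G$ has infinite cyclic center $Z = \langle z\rangle$ and admits a presentation as a generalized torus-knot group: there are generators $a_1,\dots,a_n$ and integers $m_i,k_i$ with defining relations expressing that a power of each generator is the corresponding power of the central word $z$, so that the quotient $G/Z$ is a free product of cyclic groups $\mb{Z}/m_1 * \cdots * \mb{Z}/m_n$ and the only relations among the generators are these ``power-equals-central'' relations.

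The core mechanism is as follows. Fix a unital $*$-homomorphism $\pi: C^*(G) \to \mc{M} := (M_k,\tau_k)^{\mc{U}}$ with each $M_k$ a \textbf{II}$_1$ factor, and lift each generator to unitaries $\pi(a_i) = (u_i(k))_{\mc{U}}$. Since $z$ is central in $G$, the image $w := \pi(z)$ commutes with $\pi(C^*(G))$ and hence lies in the center of $\mc{N} := \pi(C^*(G))''$. Suppose first that $w = \lambda 1$ is scalar, $\lambda \in \mb{T}$. Then the relations force $\pi(a_i)^{m_i} = \lambda^{k_i}1$, so each $\pi(a_i)$ is a unitary whose spectrum is supported on the finite set $R_i$ of $m_i$-th roots of $\lambda^{k_i}$; thus $a_i$ subject to $a_i^{m_i} = \lambda^{k_i}$ generates a finite-dimensional commutative $C^*$-algebra $C(R_i)$, and the map $C(R_i) \to \mc{M}$ is approximately liftable by Theorem \ref{commts}. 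Concretely, applying the nearest-point retraction $\mb{T}\to R_i$ through the continuous functional calculus to each $u_i(k)$ produces unitaries $U_i(k)$ with $U_i(k)^{m_i} = \lambda^{k_i}1$ exactly, and $\|U_i(k)-u_i(k)\|_2$ is controlled by $\|u_i(k)^{m_i}-\lambda^{k_i}\|_2 \to 0$, because small $2$-norm distance of the power to the scalar concentrates the spectral mass of $u_i(k)$ near $R_i$, away from the finitely many discontinuities of the retraction. As $G/Z$ is a free product there are no cross-relations between distinct generators, and every monomial occurring in the relations is now an exact power of the single common scalar $\lambda$; hence all relations hold simultaneously, the $U_i(k)$ assemble into honest unital representations $\pi_k: C^*(G)\to M_k$ that $2$-approximate $\pi$ on generators, and Lemma \ref{techlemma} completes this case.

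The hard part is removing the hypothesis that $w = \pi(z)$ is scalar: a priori $w$ only lies in the center of $\mc{N}$, which need not be a factor, so $w$ may have diffuse spectrum, in which case the generators no longer have finite spectrum and the functional-calculus step fails outright. I would resolve this by a central-reduction argument performed at the level of the approximating factors $M_k$. Partition a neighborhood of $\sigma(w)$ in $\mb{T}$ into finitely many small arcs and let $\{e_j(k)\}_j$ be the corresponding spectral projections of $u_i(k)^{m_i}$ (an approximant of $w$); since $w$ is central in $\mc{N}$, these projections asymptotically commute with every $u_i(k)$ in $2$-norm. Perturbing a family of almost-commuting projections in a finite factor to an exactly commuting one---a standard $2$-norm stability step---lets me cut each $M_k$ into blocks on which $w$ is within $\varepsilon$ of a constant $\lambda_j$, run the scalar argument block-by-block with target roots determined by $\lambda_j$, and reassemble the blocks into a single honest representation of $G$ that is $2$-close to $\pi$. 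The principal obstacle is precisely this reassembly: one must ensure the almost-commuting cut can be made exact with $2$-norm error tending to zero uniformly as the mesh of the partition refines, and that the block-diagonal representations glue to a genuine unital $*$-homomorphism respecting all the amalgamation relations at once. The free-product structure of $G/Z$ together with the fact that the center is a single infinite cyclic group are exactly what make this gluing possible.
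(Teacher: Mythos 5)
First, a point of comparison: the paper gives no proof of this statement---it is imported verbatim from \cite{hadshu2}---so there is no internal argument to measure you against, only the known structure theory and the techniques the paper does develop elsewhere. Your overall strategy (reduce via the Murasugi--Pietrowski classification, split on whether $\pi(z)$ is scalar, and in the non-scalar case cut along spectral projections of the central image before applying functional calculus blockwise) is the right family of ideas, and the cutting-and-reassembling step is essentially the same device used in the proof of Theorem \ref{pcts}. That part of your sketch is repairable: you do not need an almost-commuting-projections perturbation lemma, since the spectral projections of $\pi(z)$ commute \emph{exactly} with $\pi(C^*(G))$ in the ultraproduct and lift to exact partitions of unity in the $M_k$, after which one compresses and corrects to unitaries exactly as in Theorem \ref{pcts}.

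The genuine gap is in the group theory. A non-cyclic one-relator group with nontrivial center is \emph{not} in general a central extension of a free product of finite cyclic groups by relations of the form $a_i^{m_i} = z^{k_i}$. Pietrowski's normal form is a chain of amalgamated products $\langle a_1,\dots,a_n \mid a_1^{p_1}=a_2^{q_1},\ a_2^{p_2}=a_3^{q_2},\ \dots,\ a_{n-1}^{p_{n-1}}=a_n^{q_{n-1}}\rangle$, and for $n\ge 3$ the amalgamating elements $a_i^{p_i}$ are not central; only a common higher power $z=a_i^{N_i}$ is. For example, in $\langle a,b,c \mid a^2=b^3,\ b^5=c^7\rangle$ the center is $\langle b^{15}\rangle$, and modulo it the images of $\langle a\rangle$ and $\langle b\rangle$ intersect in a copy of $\mb{Z}/5$, so $G/Z$ is an amalgam of finite cyclic groups over nontrivial subgroups rather than a free product; your claim that the only relations are ``power equals central'' fails. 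This breaks the core mechanism: after cutting to a block where $\pi(z)\approx\lambda$, the relation $a_1^{p_1}=a_2^{q_1}$ requires unitaries $V_1,V_2$ with $V_1^{p_1}=V_2^{q_1}$ \emph{exactly}, an amalgamation condition between two non-scalar unitaries that the nearest-root retraction (which only delivers $V_i^{N_i}=\lambda$) cannot produce, and your ``no cross-relations, so everything glues'' step collapses. As written, your argument is complete only for the two-syllable groups $\langle a,b\mid a^p=b^q\rangle$ (torus-knot and Klein-bottle type), where the amalgamated element happens to be central; handling the longer chains is precisely the content that a correct proof must supply.
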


A $C^*$-algebra $\mc{A}$ is \emph{tracially amenable} if for every tracial state $\tau$ on $\mc{A}$, $\pi_\tau(\mc{A})''$ is hyperfinite where $\pi_\tau$ denotes the GNS representation induced by $\tau$.   The following theorem first appeared as Theorem 4.10 of \cite{hadwin-li} for nuclear algebras.

\begin{thm}[\cite{hadwin-li, saa}]\label{tam}
Any unital separable tracially amenable $C^*$-algebra is \textbf{II$_1$}-tracially stable.
\end{thm}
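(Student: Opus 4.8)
The plan is to verify the approximate-liftability criterion of Lemma \ref{techlemma} by factoring each approximating homomorphism through a hyperfinite von Neumann algebra that is honestly embeddable into the individual $\text{II}_1$ factors. Fix a generating set $\{b_j\}$ of $\mc{A}$ and a unital $*$-homomorphism $\pi: \mc{A} \to M := (\mc{A}_k, \tau_k)^\mc{U}$ with each $\mc{A}_k$ a $\text{II}_1$ factor. First I would record the structural facts about the target: the tracial ultraproduct $M$ is itself a $\text{II}_1$ factor carrying a faithful normal trace $\tau_\mc{U}$, so in particular every value in $[0,1]$ is attained on projections and any two projections of equal trace are Murray--von Neumann equivalent. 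Pulling back, $\tau := \tau_\mc{U} \circ \pi$ is a tracial state on $\mc{A}$, and since $\pi$ is trace-preserving onto its image the tracial von Neumann algebra $N := \pi(\mc{A})''$ is canonically isomorphic to $\pi_\tau(\mc{A})''$.

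By tracial amenability this latter algebra is hyperfinite, so $N$ is a separable hyperfinite (approximately finite dimensional) subalgebra of $M$, and I may write $N = \overline{\bigcup_n Q_n}^{\,\|\cdot\|_2}$ for an increasing sequence of finite-dimensional $*$-subalgebras $Q_n$. The heart of the argument is then a lifting statement: the trace-preserving inclusion $N \hookrightarrow M$ can be lifted to genuine trace-preserving unital $*$-homomorphisms $\theta_k: N \to \mc{A}_k$, defined for $k$ in some $E \in \mc{U}$, in the sense that $(\theta_k(x))_\mc{U}$ agrees with $x$ in $\|\cdot\|_2$ on the finitely many elements at issue. To build these I would proceed inductively on the tower $Q_1 \subseteq Q_2 \subseteq \cdots$. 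Each inclusion $Q_n \hookrightarrow M$ is encoded by a finite system of matrix units, and such a system lifts to $\mc{A}_k$: the underlying projections lift with the correct traces, since every trace value is available in a $\text{II}_1$ factor, and the connecting partial isometries lift because equivalent projections in a $\text{II}_1$ factor are unitarily equivalent. Passing from $Q_n$ to $Q_{n+1}$ requires choosing the lift of $Q_{n+1}$ so as to restrict to the previously chosen lift of $Q_n$; here I would use the relative theory of finite-dimensional subalgebras of a $\text{II}_1$ factor to absorb the discrepancy between two lifts into a unitary conjugation fixing the image of $Q_n$. Threading these choices through the ultrafilter by a standard reindexing and diagonal argument then yields the homomorphisms $\theta_k$ on all of $N$.

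With the $\theta_k$ in hand I would finish by setting $\pi_k := \theta_k \circ \pi : \mc{A} \to \mc{A}_k$, which is a genuine unital $*$-homomorphism for each $k \in E$. Because $\theta_k$ is trace-preserving it is isometric for $\|\cdot\|_2$, so approximating each $\pi(b_j)$ by an element of some $Q_n$ and invoking the lifting property of $\theta_k$ gives $\|\pi_k(b_j) - b_j(k)\|_{2,\tau_k} < \varepsilon$ for all $k \in E$ and all $j$ below the prescribed bound. This is exactly the condition in Lemma \ref{techlemma}, whence $\pi$ is approximately liftable.

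I expect the main obstacle to lie in the inductive construction of the compatible lifts $\theta_k$: controlling the accumulation of unitary adjustments as one climbs the tower $Q_1 \subseteq Q_2 \subseteq \cdots$, and organizing the estimates uniformly across the ultrafilter so that the limiting maps are honest $*$-homomorphisms rather than merely approximately multiplicative unital completely positive maps. The reduction to the hyperfinite image $N$ and the availability of all trace values and of equivalences between equal-trace projections in the $\mc{A}_k$ are precisely the features that make this feasible; for nuclear $\mc{A}$ one recovers the hyperfiniteness of $N$ for free, recovering Theorem 4.10 of \cite{hadwin-li}.
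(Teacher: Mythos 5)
The paper offers no proof of Theorem \ref{tam} of its own; it defers to Theorem 4.10 of \cite{hadwin-li} and Theorem 5.15 of \cite{saa}, and your outline is essentially the argument used there: factor $\pi$ through the hyperfinite von Neumann closure $N=\pi(\mc{A})''$ (legitimate because the ultraproduct trace is faithful, so $N\cong\pi_\tau(\mc{A})''$ with $\tau=\tau_\mc{U}\circ\pi$), lift finite-dimensional subalgebras of $N$ by lifting matrix units --- using that a II$_1$-factor realizes every trace value on projections and that equal-trace projections are equivalent --- and then verify the criterion of Lemma \ref{techlemma}. Your proposal is correct. One simplification is worth pointing out: the step you single out as the main obstacle, namely building a single coherent lift of the whole tower $Q_1\subseteq Q_2\subseteq\cdots$ while controlling the accumulated unitary corrections, is not needed. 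For a fixed finite set of generators and a fixed $\varepsilon$ it suffices to lift one level $Q_n$; to extend that lift to a unital $*$-homomorphism defined on all of $N$ (which you genuinely need, since $\pi(\mc{A})\not\subseteq Q_n$ and $\pi_k=\theta_k\circ\pi$ must make sense), take any fixed trace-preserving unital embedding $N\rightarrow R\rightarrow\mc{A}_k$ and conjugate it by a unitary of $\mc{A}_k$ so that it agrees on $Q_n$ with the lifted copy; this uses only the standard fact that two trace-preserving unital embeddings of a finite-dimensional algebra into a II$_1$-factor are unitarily conjugate, and it sidesteps the diagonalization bookkeeping entirely.
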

\noin See Theorem 5.15 of \cite{saa} for the tracially amenable case and a concise proof.  We can use tracial stability to characterize the separably acting hyperfinite II$_1$-factor in the following way.

\begin{thm}\label{Rchar}
Let $N$ be a separably acting II$_1$-factor satisfying the Connes Embedding Problem. That is, there exists a unital trace preserving embedding $N \rightarrow R^\mc{U}$ where $R$ denotes the separably acting hyperfinite II$_1$-factor.  Then $N$ is hyperfinite if and only if $N$ is \textbf{II}$_1$-tracially stable.
\end{thm}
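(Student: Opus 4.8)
The plan is to prove both directions of the equivalence, with the genuinely substantive content residing in the reverse direction. For the forward direction, I would argue that if $N$ is hyperfinite, then $N$ is tracially amenable: every tracial state $\tau$ on the hyperfinite II$_1$-factor $R$ produces a GNS von Neumann algebra $\pi_\tau(N)''$ which is a tracial quotient/compression of $R$ and hence again hyperfinite. Invoking Theorem \ref{tam}, any separable tracially amenable $C^*$-algebra (and a separably acting II$_1$-factor is such) is \textbf{II}$_1$-tracially stable. Strictly speaking I would want to treat $N$ as a $C^*$-algebra here and confirm that its unique trace makes it tracially amenable, so that Theorem \ref{tam} applies directly.

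For the reverse direction, the plan is to assume $N$ is \textbf{II}$_1$-tracially stable and deduce hyperfiniteness by exploiting the hypothesis that $N$ embeds into $R^\mc{U}$. Write $R^\mc{U} = (R,\tau_R)^\mc{U}$ as the tracial ultraproduct of the single II$_1$-factor $R$ over the ultrafilter $\mc{U}$, which is itself a II$_1$-factor and thus lies in the class \textbf{II}$_1$. Let $\iota\colon N \hookrightarrow R^\mc{U}$ be the given trace-preserving embedding. Since $N$ is \textbf{II}$_1$-tracially stable and each $\mc{A}_k = R$ is a II$_1$-factor, the unital $*$-homomorphism $\iota$ is approximately liftable: there exist unital $*$-homomorphisms $\iota_k\colon N \to R$ for $k$ in some $E \in \mc{U}$ with $\iota(a) = (\iota_k(a))_\mc{U}$ for all $a \in N$.

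The crux is then to promote these approximate lifts into an actual hyperfiniteness conclusion for $N$. Each $\iota_k\colon N \to R$ is a unital $*$-homomorphism from a II$_1$-factor into the hyperfinite II$_1$-factor; since $N$ is a factor, $\iota_k$ is injective and hence a trace-preserving embedding of $N$ into $R$. A trace-preserving embedding of a II$_1$-factor into $R$ forces that factor to be hyperfinite, because the image $\iota_k(N)''$ is a subfactor of $R$ that is weakly closed with respect to the unique trace, and $N \cong \iota_k(N)''$ inherits the approximate finite dimensionality of $R$ via the trace-preserving isomorphism. Concretely, I would use that $R$ satisfies the property that any von Neumann subalgebra with the induced trace which is itself weakly dense in a copy of $N$ yields, through the local finite-dimensional approximations of $R$ in $\|\cdot\|_{2}$, the corresponding approximations for $N$.

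The step I expect to be the main obstacle is precisely this last promotion: verifying rigorously that an approximate lift $\{\iota_k\}$ produced by tracial stability can be upgraded to a genuine trace-preserving $*$-homomorphism $N \to R$ whose existence forces hyperfiniteness of $N$. The subtlety is that approximate liftability a priori only gives unital $*$-homomorphisms $N \to R$ agreeing with $\iota$ after passing through the ultraproduct, and one must be careful that such a homomorphism is automatically trace-preserving (which follows from uniqueness of the trace on the factor $N$ and on $R$) and injective (which follows from $N$ being a factor, hence simple as a von Neumann algebra, so any nonzero normal homomorphism is an embedding). Once a single honest trace-preserving embedding $N \hookrightarrow R$ is secured, hyperfiniteness of $N$ follows from the standard fact that a II$_1$-factor admitting a trace-preserving embedding into $R$ must itself be hyperfinite, completing the argument.
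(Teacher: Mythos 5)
Your proposal is correct and follows essentially the same route as the paper: the forward direction is Theorem \ref{tam} applied to the (tracially amenable) hyperfinite factor, and the reverse direction uses \textbf{II}$_1$-tracial stability to lift the given embedding $N \to R^\mc{U}$ to an honest unital $*$-homomorphism $N \to R$, automatically injective and trace-preserving, so that $N$ is hyperfinite by the standard fact that a II$_1$-factor embedding trace-preservingly into $R$ is hyperfinite (the paper states this same fact in contrapositive form: a non-hyperfinite $N$ does not embed into $R$, so no embedding into $R^\mc{U}$ can be approximately liftable). The only soft spot is your sketch of that last standard fact---the finite-dimensional approximants of $R$ do not directly restrict to a subalgebra; the clean justification is that the trace-preserving conditional expectation $R \to \iota_k(N)$ makes the (weakly closed, since a trace-preserving $*$-homomorphism between finite von Neumann algebras is normal) image injective, hence hyperfinite by Connes' theorem.
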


\begin{proof}
$(\mbf{\Rightarrow})$: This follows from Theorem \ref{tam}.

$(\mbf{\Leftarrow})$:  If $N$ is not hyperfinite, then it does not embed into $R$.  Thus no embedding $N \rightarrow R^\mc{U}$ is approximately liftable.
\end{proof}

\noin This result leads to the following open question.

\begin{?}\label{21ts}
In \cite{jung}, Jung showed that a (finitely generated) separably acting II$_1$-factor $N$ satisfying the Connes Embedding Problem is hyperfinite if and only if any two embeddings $N \rightarrow R^\mc{U}$ are unitarily equivalent. Theorem \ref{Rchar} shows that the separably acting hyperfinite II$_1$-factor $R$ has the weaker property that any embedding $R \rightarrow R^\mc{U}$ is approximately liftable.  Does this property characterize $R$?  That is, if $N$ is a separably acting II$_1$-factor  such that every embedding $N \rightarrow N^\mc{U}$ is approximately liftable, then is $N$ necessarily hyperfinite?
\end{?}

The next result follows directly from the universal property of free products.

\begin{prop}\label{freeprop}
Let $\ms{C}$ be any class of unital $C^*$-algebras closed under \linebreak $*$-isomorphisms. The full/universal free product of finitely many $\ms{C}$-tracially stable $C^*$-algebras is $\ms{C}$-tracially stable.
\end{prop}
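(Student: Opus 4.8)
The plan is to prove that the free product of finitely many $\ms{C}$-tracially stable algebras is $\ms{C}$-tracially stable, using the universal property of the full free product to reduce the problem to the individual factors.
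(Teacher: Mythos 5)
Your approach is exactly the paper's: the paper offers no written proof beyond the remark that the result ``follows directly from the universal property of free products,'' which is precisely your plan (restrict $\pi$ to each free factor, lift each restriction by $\ms{C}$-tracial stability, and recombine the lifts via the universal property, which imposes no compatibility conditions for a free product). The idea is correct and matches the paper; to make it a complete argument you would just add the routine verification, via Lemma \ref{techlemma}, that the recombined maps converge to $\pi$ on a generating set.
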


\noin The corresponding tensor product result takes more care.	
	
\begin{thm}[\cite{hadshu}]\label{tensorthm}
Let $\ms{C}$ be a class of \rrz $C^*$-algebras closed under taking direct sums and unital corners. If $\mc{A}$ is $\ms{C}$-tracially stable and $X$ is a compact Hausdorff space, then $\mc{A} \otimes C(X)$ is $\ms{C}$-tracially stable. 
\end{thm}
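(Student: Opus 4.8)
The plan is to reduce everything to a single application of the tracial stability of $\mc{A}$ inside the corners cut out by the image of $C(X)$. Fix a unital $*$-homomorphism $\pi: \mc{A}\otimes C(X) \to M$, where $M := (\mc{A}_k,\tau_k)^\mc{U}$ with each $\mc{A}_k \in \ms{C}$, and recall that $M$ is a finite von Neumann algebra with trace $\tau_\mc{U}$. The ranges of $\pi(\mc{A}\otimes1)$ and $\pi(1\otimes C(X))$ commute. By Lemma \ref{techlemma} it suffices, given $\varepsilon>0$ and finitely many generators $b_1,\dots,b_N$ of $\mc{A}$ and $f_1,\dots,f_N$ of $C(X)$, to produce on a set $E\in\mc{U}$ unital $*$-homomorphisms $\pi_k:\mc{A}\otimes C(X)\to\mc{A}_k$ with $\|\pi_k(b_j\otimes1)-\pi(b_j\otimes1)(k)\|_{2,\tau_k}<\varepsilon$ and $\|\pi_k(1\otimes f_l)-\pi(1\otimes f_l)(k)\|_{2,\tau_k}<\varepsilon$ for all $j,l$.

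First I would discretize the $C(X)$-direction. Using compactness, choose a finite Borel partition $X=\bigsqcup_{i=1}^n X_i$ and points $x_i\in X_i$ so that $|f_l(x)-f_l(x_i)|<\varepsilon$ for all $x\in X_i$ and all $l$. Theorem \ref{commts} tells us $C(X)$ is $\ms{C}$-tracially stable, which is the conceptual reason the $C(X)$-direction can be lifted; concretely, since $M$ is a finite von Neumann algebra I take the spectral projections $E_i\in\pi(1\otimes C(X))''$ of the sets $X_i$, obtaining a partition of unity $E_1,\dots,E_n$ with $\|\pi(1\otimes f_l)-\sum_i f_l(x_i)E_i\|<\varepsilon$. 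Crucially, each $E_i$ commutes with $\pi(\mc{A}\otimes1)$, since the latter commutes with $\pi(1\otimes C(X))$ and hence with the von Neumann algebra it generates. Using that each $\mc{A}_k$ is \rrz, I then lift $(E_i)_i$ to genuine mutually orthogonal projections $q_1^{(k)},\dots,q_n^{(k)}\in\mc{A}_k$ summing to $1$ with $(q_i^{(k)})_\mc{U}=E_i$, by rounding finite-spectrum norm-approximants of self-adjoint representatives and taking complements successively; discarding any $i$ with $E_i=0$ we may assume $\tau_\mc{U}(E_i)>0$.

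Next I would run the corner argument. The compression $\phi_i:\mc{A}\to E_iME_i$, $\phi_i(a)=\pi(a\otimes1)E_i$, is a unital $*$-homomorphism, multiplicativity being exactly where the commutation of $E_i$ with $\pi(\mc{A}\otimes1)$ is used. The inclusion $\bigoplus_i q_i^{(k)}\mc{A}_k q_i^{(k)}\hookrightarrow\mc{A}_k$ is trace preserving, so it identifies $\big(\bigoplus_i q_i^{(k)}\mc{A}_k q_i^{(k)}\big)^\mc{U}$ with $\bigoplus_i E_iME_i$. Because $\ms{C}$ is closed under unital corners and direct sums, $\bigoplus_i q_i^{(k)}\mc{A}_k q_i^{(k)}\in\ms{C}$, so the single map $\Phi:=\bigoplus_i\phi_i:\mc{A}\to\big(\bigoplus_i q_i^{(k)}\mc{A}_k q_i^{(k)}\big)^\mc{U}$ is approximately liftable by the $\ms{C}$-tracial stability of $\mc{A}$. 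This yields, on a set in $\mc{U}$, unital $*$-homomorphisms $\sigma_{i,k}:\mc{A}\to q_i^{(k)}\mc{A}_k q_i^{(k)}$ with $(\sigma_{i,k}(a))_\mc{U}=\phi_i(a)$. The two closure hypotheses do precisely the expected work: corners realize each block in $\ms{C}$, and direct sums let one invoke the stability of $\mc{A}$ a single time.

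Finally I would assemble and estimate. The maps $\alpha_k(a):=\sum_i\sigma_{i,k}(a)$ and $\beta_k(f):=\sum_i f(x_i)q_i^{(k)}$ are unital $*$-homomorphisms of $\mc{A}$ and $C(X)$ into $\mc{A}_k$ with commuting ranges — the block-orthogonality $q_i^{(k)}q_{i'}^{(k)}=0$ kills all cross terms — so by nuclearity of $C(X)$ they induce a unital $*$-homomorphism $\pi_k:\mc{A}\otimes C(X)\to\mc{A}_k$ with $\pi_k(a\otimes f)=\sum_i f(x_i)\sigma_{i,k}(a)$. For the $C(X)$-generators, $(\pi_k(1\otimes f_l))_\mc{U}=\sum_i f_l(x_i)E_i$ lies within $\varepsilon$ of $\pi(1\otimes f_l)$; for the $\mc{A}$-generators, $(\pi_k(b_j\otimes1))_\mc{U}=\sum_i\phi_i(b_j)=\pi(b_j\otimes1)\sum_iE_i=\pi(b_j\otimes1)$ exactly. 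Hence both families of $2$-norm estimates hold along $\mc{U}$, and Lemma \ref{techlemma} delivers approximate liftability of $\pi$. I expect the main obstacle to be the second step: producing projections in each $\mc{A}_k$ that simultaneously approximate the image of $C(X)$ and commute with the image of $\mc{A}$. This is exactly where real rank zero is indispensable, since $C(X)$ itself need not be real rank zero and the required projections are first found in the von Neumann algebra $M$ and only then lifted back into the $\mc{A}_k$.
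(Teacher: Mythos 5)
Your argument is correct and follows essentially the same route as the Hadwin--Shulman proof that the paper cites for this theorem and reuses as the template for Case II of Theorem \ref{pcts}: discretize the commutative factor into a partition of unity by projections commuting with $\pi(\mc{A}\otimes 1)$, lift those projections using real rank zero, apply the $\ms{C}$-tracial stability of $\mc{A}$ in the resulting corners (where the closure under unital corners and direct sums enters), and reassemble. The only deviations are cosmetic — you partition $X$ rather than the spectrum of $\pi(1\otimes C(X))$, lift the projections by hand rather than via Theorem \ref{commts} applied to $C^*(\chi_{E_1},\dots,\chi_{E_m})$, and invoke stability once on the direct sum of corners rather than once per corner — none of which changes the substance.
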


\noin Theorem \ref{tensorthm} yields an interesting corollary about relative commutants of tracially stable subalgebras of ultraproducts which may be known to experts but has not appeared in the literature. 

\begin{cor}
Let $\ms{C}$ be a class of \rrz $C^*$-algebras closed under taking direct sums and unital corners. Let $\mc{A}$ be a unital $\ms{C}$-tracially stable $C^*$-algebra.  For each $k \in \mb{N}$, let $\mc{A}_k$ be a member of $\ms{C}$ and $\tau_k$ be a tracial state on $\mc{A}_k$.  Fix a $*$-homomorphism $\pi: \mc{A} \rightarrow (\mc{A}_k,\tau_k)^\mc{U}$.  

\begin{enumerate}
\item If $x \in \pi(\mc{A})' \cap (\mc{A}_k,\tau_k)^\mc{U}$ is normal (self-adjoint, a projection, a unitary), then there is a lifting $\left\{\pi_k: \mc{A} \rightarrow \mc{A}_k\right\}$ of $\pi$ such that $x \in (\pi_k(\mc{A})'\cap \mc{A}_k, \tau_k)^\mc{U}$.  That is, $x = (x_k)_\mc{U}$ where $x_k \in \pi_k(\mc{A})'\cap \mc{A}_k$ is normal (respectively self-adjoint, a projection, a unitary) for every $k \in E$ for some $E \in \mc{U}$.  

\item From (1) we have \[ \pi(\mc{A})'\cap (\mc{A}_k,\tau_k)^\mc{U} = \bigvee_{\left\{\pi_k\right\} \text{is a lifting of } \pi} (\pi_k(\mc{A})'\cap \mc{A}_k, \tau_k)^\mc{U}\] where $\bigvee$ denotes the join.
\end{enumerate}
\end{cor}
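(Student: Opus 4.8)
The plan is to prove part (1) first and then deduce part (2) as a formal consequence. Fix a normal element $x = (x_k)_\mc{U} \in \pi(\mc{A})' \cap (\mc{A}_k,\tau_k)^\mc{U}$. The key idea is to encode the commutation of $x$ with $\pi(\mc{A})$ as a $*$-homomorphism out of an enlarged domain algebra to which Theorem \ref{tensorthm} applies. Since $x$ is normal, it generates a copy of $C(X)$ where $X = \operatorname{sp}(x) \subseteq \mb{C}$ (with $X \subseteq \mb{R}$ in the self-adjoint case, $X \subseteq \{0,1\}$ for a projection, $X \subseteq \mb{T}$ for a unitary). Because $x$ commutes with all of $\pi(\mc{A})$, the pair $(\pi, x)$ assembles into a single unital $*$-homomorphism $\widetilde{\pi}: \mc{A} \otimes C(X) \rightarrow (\mc{A}_k,\tau_k)^\mc{U}$ determined by $a \otimes f \mapsto \pi(a) f(x)$; this is well-defined on the algebraic tensor product precisely because the two images commute, and it extends to the maximal tensor product, which here coincides with $\mc{A} \otimes C(X)$ since $C(X)$ is nuclear.

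Next I would invoke Theorem \ref{tensorthm}: as $\ms{C}$ is a class of \rrz $C^*$-algebras closed under direct sums and unital corners and $\mc{A}$ is $\ms{C}$-tracially stable, the algebra $\mc{A} \otimes C(X)$ is $\ms{C}$-tracially stable. Hence $\widetilde{\pi}$ is approximately liftable: there is a set $E \in \mc{U}$ and unital $*$-homomorphisms $\widetilde{\pi}_k: \mc{A} \otimes C(X) \rightarrow \mc{A}_k$ for $k \in E$ with $\widetilde{\pi}(y) = (\widetilde{\pi}_k(y))_\mc{U}$ for all $y$. Now define $\pi_k := \widetilde{\pi}_k(\,\cdot\, \otimes 1)$ and $x_k := \widetilde{\pi}_k(1 \otimes z)$, where $z \in C(X)$ is the coordinate function $z(t) = t$. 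Then $\{\pi_k\}$ is a lifting of the original $\pi$, and since $\widetilde{\pi}_k$ is a $*$-homomorphism, $x_k$ commutes with $\pi_k(\mc{A}) = \widetilde{\pi}_k(\mc{A} \otimes 1)$ and inherits the functional-calculus relation satisfied by $z$ in $C(X)$; thus $x_k$ is normal, and is self-adjoint / a projection / a unitary exactly when $X$ forces $z$ to be. Finally $x = \widetilde{\pi}(1 \otimes z) = (x_k)_\mc{U}$ with each $x_k \in \pi_k(\mc{A})' \cap \mc{A}_k$, which is the assertion of (1).

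For part (2), the inclusion $\seq$ is immediate: any element of the form $(x_k)_\mc{U}$ with $x_k \in \pi_k(\mc{A})' \cap \mc{A}_k$ commutes with each $\pi(b) = (\pi_k(b))_\mc{U}$ and so lies in $\pi(\mc{A})' \cap (\mc{A}_k,\tau_k)^\mc{U}$; taking the join over all liftings preserves this containment. The inclusion $\peq$ is where part (1) does the work: since $\pi(\mc{A})' \cap (\mc{A}_k,\tau_k)^\mc{U}$ is a von Neumann algebra, it is generated (as a von Neumann algebra) by its projections, and by (1) every projection $x$ in the relative commutant can be written as $(x_k)_\mc{U}$ with $x_k \in \pi_k(\mc{A})' \cap \mc{A}_k$ a projection for a suitable lifting $\{\pi_k\}$. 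Hence every projection of the relative commutant lies in the right-hand join, and taking the von Neumann algebra generated by these projections yields the reverse containment.

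The main obstacle I anticipate is the lifting-compatibility bookkeeping across different elements $x$, since part (1) produces a lifting $\{\pi_k\}$ \emph{depending on} $x$, whereas the join in (2) must range over \emph{all} liftings simultaneously. This is not actually a problem for the equality in (2) because the right-hand side is a supremum over every lifting, so it suffices that \emph{some} lifting works for each individual generator $x$; one does not need a single lifting to diagonalize the entire relative commutant at once. The more delicate technical point is justifying that $\widetilde{\pi}$ extends to the (maximal = minimal) tensor product and that the functional-calculus identities characterizing normal / self-adjoint / projection / unitary elements are exactly preserved under the $*$-homomorphism $\widetilde{\pi}_k$; both reduce to the universal property of $C(X)$ together with continuity of functional calculus, and I would spell these out only briefly.
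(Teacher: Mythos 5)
Your proof of part (1) is essentially identical to the paper's: both encode $x$ via the algebra $\mc{A}\otimes C(X)$ with $X=\operatorname{sp}(x)$, invoke Theorem \ref{tensorthm} for its $\ms{C}$-tracial stability, lift the combined homomorphism, and restrict to the two tensor factors to obtain $\{\pi_k\}$ and $x_k$. For part (2) the paper simply decomposes an arbitrary element of the relative commutant into its real and imaginary (self-adjoint) parts, which is purely algebraic, whereas your route through projections additionally requires that the relative commutant and the join be weakly closed (true here since the tracial ultraproduct is a von Neumann algebra, but an extra step); otherwise the arguments coincide.
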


\begin{proof}
\noin (1): We prove the normal case; the other cases follow \emph{mutatis mutandis}. Let $x \in \pi(\mc{A})' \cap (\mc{A}_k,\tau_k)^\mc{U}$ be normal, and let $X$ denote the spectrum of $x$.  By Theorem \ref{tensorthm}, $\mc{A} \otimes C(X)$ is $\ms{C}$-tracially stable.  Let $\rho: C(X) \rightarrow (\mc{A}_k,\tau_k)^\mc{U}$ denote the $*$-homomorphism given by $\text{id} \mapsto x$.  By hypothesis, the images of $\pi$ and $\rho$ commute. Thus we can form the approximately liftable $*$-homomorphism $\pi \otimes \rho: \mc{A} \otimes C(X) \rightarrow (\mc{A}_k,\tau_k)^\mc{U}$.  So there is a lifting $\left\{\theta_k\right\}$ of $\pi \otimes \rho$ such that for every $y \in \mc{A} \otimes C(X), (\pi \otimes \rho)(y) = (\theta_k(y))_\mc{U}$.  Put $\pi_k := \theta_k|_{\mc{A}\otimes 1}$.  Then $\left\{\pi_k\right\}$ is a lifting of $\pi$ and $x = \rho(\text{id}) = (\theta_k(1\otimes \text{id}))_\mc{U} \in (\pi_k(\mc{A})'\cap \mc{A}_k, \tau_k)^\mc{U}.$

(2):  The algebra $\pi(\mc{A})' \cap (\mc{A}_k,\tau_k)^\mc{U}$ is spanned by its self-adjoints.  By (1), for every self-adjoint $x \in \pi(\mc{A})' \cap (\mc{A}_k,\tau_k)^\mc{U}$, there is a lifting $\left\{\pi_k\right\}$ such that $x \in (\pi_k(\mc{A})'\cap \mc{A}_k,\tau_k)^\mc{U}$.  Thus \[\pi(\mc{A})' \cap (\mc{A}_k,\tau_k)^\mc{U} \subset \bigvee_{\left\{\pi_k\right\} \text{is a lifting of } \pi} (\pi_k(\mc{A})'\cap \mc{A}_k, \tau_k)^\mc{U}.\]  The reverse containment is obvious.
\end{proof}

\begin{rmk}
Some concrete classes of \rrz $C^*$-algebras of interest are \textbf{fvNa}: finite von Neumann algebras, \textbf{ff}: finite factor von Neumann algebras, \textbf{II$_1$}: II$_1$-factors, and \textbf{matrix}: matrix algebras. The class \fvna\; is a class of \rrz $C^*$-algebras closed under taking direct sums and unital corners.  This class contains matrix algebras and II$_1$-factors.  So any algebra that is $\ms{C}$-tracially stable for any class $\ms{C}$ of \rrz $C^*$-algebras closed under taking direct sums and unital corners is \fvna-, \textbf{ff}-, \textbf{II$_1$}-, and \textbf{matrix}-tracially stable.
\end{rmk}

The following proposition relates a notion of group stability with \textbf{matrix}-tracial stability.

\begin{prop}\label{gpprop}
Let $G$ be a discrete group.  Using the notation of \cite{beclub}, $G$ is HS-stable if and only if the full group $C^*$-algebra is \textbf{matrix}-tracially stable.
\end{prop}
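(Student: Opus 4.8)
The plan is to translate both conditions into the language of unitary representations via the universal property of the full group $C^*$-algebra, and then observe that the two resulting statements coincide on the nose. Recall that for any unital $C^*$-algebra $\mc{B}$, unital $*$-homomorphisms $C^*(G) \to \mc{B}$ are in canonical bijection with group homomorphisms $G \to U(\mc{B})$ into the unitary group, via $\pi \mapsto (g \mapsto \pi(u_g))$, where $u_g$ denotes the canonical unitary in $C^*(G)$ implementing $g$. The key simplification on the $C^*$-side is that every member $\mc{A}_k$ of the class \textbf{matrix} is a full matrix algebra $M_{d_k}(\mb{C})$ carrying a \emph{unique} tracial state, the normalized trace $\text{tr}_k$; hence the only tracial ultraproducts available over \textbf{matrix} are the $(\mc{A}_k, \text{tr}_k)^{\mc{U}}$, and I would apply the bijection above with $\mc{B} = \mc{A}_k$ and with $\mc{B} = (\mc{A}_k, \text{tr}_k)^{\mc{U}}$. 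Throughout I would adopt the standing assumption that $G$ is countable, so that $C^*(G)$ is separable with canonical generating set $\{u_g\}_{g \in G}$ and Lemma \ref{techlemma} applies.

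First I would record the identification of the unitary group of the tracial ultraproduct. By a standard polar-decomposition/functional-calculus argument for tracial ultraproducts of finite von Neumann algebras, every unitary of $(\mc{A}_k, \text{tr}_k)^{\mc{U}}$ admits a representing sequence consisting of honest unitaries $v_k \in U(\mc{A}_k)$. Consequently $U\big((\mc{A}_k, \text{tr}_k)^{\mc{U}}\big)$ is precisely the metric ultraproduct $\prod_{\mc{U}}(U(d_k), d_{HS})$ of the unitary groups under the normalized Hilbert-Schmidt metric $d_{HS}$, which is exactly the target group appearing in the definition of HS-stability in \cite{beclub}. Under the bijection of the previous paragraph, a homomorphism $\phi: G \to U\big((\mc{A}_k, \text{tr}_k)^{\mc{U}}\big)$ corresponds to a unital $*$-homomorphism $\pi: C^*(G) \to (\mc{A}_k, \text{tr}_k)^{\mc{U}}$ with $\pi(u_g) = \phi(g)$, and choosing unitary representatives $\phi(g) = (v_k^g)_{\mc{U}}$ exhibits $\phi$ as arising from an asymptotic homomorphism $g \mapsto v_k^g$.

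With these identifications in hand, both directions are a matter of reading off definitions. For the forward implication, assume $G$ is HS-stable and let $\pi: C^*(G) \to (\mc{A}_k, \text{tr}_k)^{\mc{U}}$ be a unital $*$-homomorphism with associated $\phi$. By HS-stability, $\phi$ is approximated by a sequence of genuine homomorphisms $\psi_k: G \to U(\mc{A}_k)$, and the corresponding unital $*$-homomorphisms $\pi_k: C^*(G) \to \mc{A}_k$ (with $\pi_k(u_g) = \psi_k(g)$) furnish the approximate lifting of $\pi$. The converse runs in reverse: \textbf{matrix}-tracial stability hands an approximate lifting $\{\pi_k\}$ of $\pi$, and the $\psi_k := \pi_k(u_\bullet)$ are the genuine homomorphisms approximating $\phi$ in Hilbert-Schmidt distance required by HS-stability. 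In both cases I would invoke Lemma \ref{techlemma} to do the bookkeeping: since $\|\pi_k(u_g) - v_k^g\|_{2,\text{tr}_k} = d_{HS}(\psi_k(g), v_k^g)$, the generator-wise Hilbert-Schmidt approximation demanded by HS-stability on $\{u_g\}_{g\in G}$ is \emph{verbatim} the $\varepsilon$-$N$ criterion of that lemma.

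The only point demanding genuine care — and thus the main obstacle — is the faithful matching of the two quantifier formats. The $C^*$-algebraic definition of approximate liftability is phrased over the ultrafilter with a fixed set $E \in \mc{U}$, whereas HS-stability is customarily stated as an asymptotic (\tql for every $\varepsilon$, eventually\tqr) condition ranging over all sequences of asymptotic homomorphisms. Reconciling these is precisely what Lemma \ref{techlemma} accomplishes on the $C^*$-side; on the group side I would unwind the standard equivalence between the sequential and ultraproduct formulations of HS-stability, passing between an asymptotic homomorphism and the induced $\phi$ by choosing unitary representatives and restricting to a set in $\mc{U}$ on which the multiplicative defects are small. Once this equivalence and the unitary-lifting fact of the second paragraph are in place, no further estimates are needed and the proposition follows.
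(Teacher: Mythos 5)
The paper states Proposition \ref{gpprop} without any proof, treating it as an immediate translation between the definition of HS-stability in \cite{beclub} and the definition of \textbf{matrix}-tracial stability; your argument carries out exactly that translation, correctly and completely (the universal property of $C^*(G)$, uniqueness of the trace on matrix algebras, lifting unitaries of the tracial ultraproduct via polar decomposition, and the quantifier-matching via Lemma \ref{techlemma}). Your standing countability assumption on $G$ is the right reading of the statement and is consistent with the paper's later use of the result for countable groups.
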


\subsection{Graph products}\label{gp} In operator algebras, graph products unify the notions of free and tensor products.  In particular, given a simplicial graph $\Gamma$ assign an algebra to each vertex. If there is an edge between two vertices then the two corresponding algebras commute with each other in the graph product; if there is no edge between two vertices then the two corresponding algebras have no relations with each other within the graph product.  Thus free products are given by edgeless graphs, and tensor products are given by complete graphs.  Such products initially appeared in the group theory context, and the most well-known examples are right-angled Artin groups (graph products of $\mb{Z}$) and right-angled Coxeter groups (graph products of $\mb{Z}/2\mb{Z}$).  See \cite{baudisch, chiswell, droms1, droms2, droms3, green, charney, wise}. Interest in graph products has recently entered the realm of operator algebras.  See \cite{mlot,spewys,casfim,rec,li2,gpucp,gphaag}.

\begin{dfn}
A \emph{simplicial graph} $\Gamma$ is a graph with the following properties
\begin{enumerate}
\item undirected; 
\item no single-vertex loops;
\item there is at most one edge between vertices.
\end{enumerate}
We denote the set of vertices and the set of edges of $\Gamma$ as $V\Gamma$ and $E\Gamma$ respectively. We can consider $E\Gamma$ as a symmetric subset of $V\Gamma \times V\Gamma$ that does not intersect the diagonal, and when convenient, we will identify a pair $(v,w)\in E\Gamma$ with $(w,v)$.
\end{dfn}

Per usual, there are reduced and universal versions of graph products of $C^*$-algebras.  In this article, we will consider the universal graph product of $C^*$-algebras defined as follows.

\begin{dfn}\label{unidef}
Given a simplicial graph $\Gamma$ and unital $C^*$-algebras $\mc{A}_v$ for every $v \in V\Gamma$, the \emph{universal graph product $C^*$-algebra} is the unique unital $C^*$-algebra $\bigstar_\Gamma \mc{A}_v$ together with unital $*$-homomorphisms $\iota_v: \mc{A}_v \rightarrow \bigstar_\Gamma \mc{A}_v$  satisfying the following universal property.
\begin{enumerate}
\item $[\iota_v(a),\iota_w(b)] = 0$ whenever $a \in \mc{A}_v, b \in \mc{A}_w, (v,w) \in E\Gamma$;
\item For any unital $C^*$-algebra $\mc{B}$ with $*$-homomorphisms $\left\{f_v: \mc{A}_v \rightarrow \mc{B}\right\}_{v \in V\Gamma}$ such that $[f_v(a),f_w(b)] = 0$ whenever $a \in \mc{A}_v, b \in \mc{A}_w, (v,w) \in E\Gamma$, there exists a unique $*$-homomorphism $\bigstar_\Gamma f_v: \bigstar_\Gamma \mc{A}_v \rightarrow \mc{B}$ such that $\bigstar_\Gamma f_v \circ \iota_{v_0} = f_{v_0}$ for every $v_0 \in V\Gamma$.
\end{enumerate}
See \S 3 of \cite{mlot}, Remark 2.5 of \cite{casfim}, and \S\S 1.1 of \cite{gpucp} for discussions on the existence of such algebras.
\end{dfn}

\begin{dfn}
Let $\Gamma$ be a simplicial graph, and let $\mc{A}$ be a $C^*$-algebra.  For each $v \in V\Gamma$, let $\mc{A}_v$ be a $C^*$-subalgebra of $\mc{A}$.  If $[a,b] = 0$ whenever $a \in \mc{A}_v$ and $b \in \mc{A}_w$ with $(v,w) \in E\Gamma$, then we say the subalgebras $\left\{\mc{A}_v\right\}_{v \in V\Gamma}$ \emph{commute according to $\Gamma$}.  If $\left\{A_v\right\}_{v\in V\Gamma}$ are elements of $\mc{A}$ indexed by $V\Gamma$, then the elements $\left\{A_v\right\}_{v\in V\Gamma}$ commute according to $\Gamma$ if $[A_v,A_w] = 0$ whenever $(v,w) \in E\Gamma$.
\end{dfn}

\noin We can define the graph product of groups using a universal property analogous to the $C^*$-algebraic one in Definition \ref{unidef}.  Alternatively, one can define a graph product of groups as follows. 

\begin{dfn}
Fix a simplicial (undirected, no single-vertex loops, at most one edge between vertices) graph $\Gamma$. For each $v \in V\Gamma$, let $G_v$ be a group. The \emph{graph product group} $\bigstar_\Gamma G_v$ is given by the free product $*_{v\in V} G_v$ modulo the relations $[g,h] =1$ whenever $g \in G_v, h \in G_w$ and $(v,w) \in E\Gamma$. 
\end{dfn}

For the remainder of this section, fix a simplicial graph $\Gamma$, and for each $v \in V\Gamma$, let $\mc{A}_v$ be a unital $C^*$-algebra. When working with graph products, the bookkeeping can be done by considering words with letters from the vertex set $V\Gamma$.  Such words are given by finite sequences of elements from $V\Gamma$ and will be denoted with bold letters.  In order to encode the commuting relations given by $\Gamma$, we consider the equivalence relation generated by the following relations.
\begin{align*}
(v_1,\dots, v_i, v_{i+1},\dots, v_n) &\sim (v_1,\dots, v_i, v_{i+2}, \dots, v_n) &\text{if} &&v_i = v_{i+1}\\
(v_1,\dots, v_i, v_{i+1},\dots, v_n) &\sim (v_1,\dots, v_{i+1}, v_i,\dots, v_n) &\text{if} &&(v_i,v_{i+1}) \in E\Gamma.
\end{align*}
The concept of a reduced word is central to the theory of graph products.  The following definition is Definition 3.2 of \cite{spenic} in graph language; the equivalent definition in \cite{casfim} appears differently.

\begin{dfn}\label{redv}
A word $\mbf{v} = (v_1,\dots,v_n)$ is \emph{reduced} if whenever $v_k = v_l, k < l$, then there exists a $p$ with $k< p < l$ such that $(v_k, v_p) \notin E\Gamma$.  Let $\mc{W}_\text{red}$ denote the set of all reduced words.  We take the convention that the empty word is reduced.
\end{dfn}

\begin{prop}[\cite{green}, \cite{casfim}]\label{reducedlemma}\hspace*{\fill}
\begin{enumerate}
\item Every word $\mbf{v}$ is equivalent to a reduced word $\mbf{w} = (w_1,\dots, w_n)$.  (We let $|\mbf{w}|=n$ denote the \emph{length} of the reduced word.)
\item If $\mbf{v} \sim \mbf{w}\sim\mbf{w}'$ with both $\mbf{w}$ and $\mbf{w}'$ reduced, then the lengths of $\mbf{w}$ and $\mbf{w}'$ are equal and $\mbf{w}' = (w_{\sigma(1)},\dots,w_{\sigma(n)})$ is a permutation of $\mbf{w}$.  Furthermore, this permutation $\sigma$ is unique if we insist that whenever $w_k = w_l, k< l$ then $\sigma(k) < \sigma(l)$.
\end{enumerate}
\end{prop}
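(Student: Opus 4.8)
The plan is to handle the two parts separately, treating part (1) as a straightforward termination argument and reserving the real work for the uniqueness claim in part (2). For part (1) I would induct on the length of $\mbf{v}$. If $\mbf{v}$ is not reduced, choose $k < l$ with $v_k = v_l$ and $(v_k, v_p) \in E\Gamma$ for every $p$ with $k < p < l$. The second relation then lets me commute $v_l$ leftward one letter at a time until it sits immediately to the right of $v_k$, after which the first relation merges the two equal adjacent letters into one, producing an equivalent word of strictly smaller length. Since the length is a nonnegative integer, iterating this process terminates at a reduced word equivalent to $\mbf{v}$.

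For part (2) the organizing observation is that the second relation (commutation swaps) preserves length while the first relation (merging) strictly decreases it. I would therefore separate the two: let $\equiv_c$ denote the equivalence generated by swaps alone, and define a rewriting relation $\Rightarrow$ on $\equiv_c$-classes by declaring $[\mbf{u}] \Rightarrow [\mbf{u}']$ when some representative of $[\mbf{u}]$ passes to a representative of $[\mbf{u}']$ by a single merge. Reduced words are then exactly the representatives of the $\Rightarrow$-irreducible $\equiv_c$-classes, and the full equivalence $\sim$ is generated by $\Rightarrow$ together with $\equiv_c$. The goal reduces to showing that every $\sim$-class contains a unique $\Rightarrow$-irreducible $\equiv_c$-class: granting this, any two equivalent reduced words lie in the same $\equiv_c$-class, hence are related by commutation swaps alone, and in particular have equal length and are permutations of one another.

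The main obstacle is this uniqueness, for which I would invoke Newman's lemma. The system $\Rightarrow$ terminates since each step lowers the length, so it suffices to establish local confluence: whenever $[\mbf{u}] \Rightarrow [\mbf{u}_1]$ and $[\mbf{u}] \Rightarrow [\mbf{u}_2]$, the classes $[\mbf{u}_1]$ and $[\mbf{u}_2]$ admit a common $\Rightarrow$-reduct. This is the technical heart. Each merge amounts to bringing two occurrences of a single vertex $v$ together through intermediate letters all commuting with $v$, and I expect a case analysis according to whether the two merge sites involve distinct vertices, disjoint occurrences of one vertex, or three mutually assemblable occurrences of one vertex. The delicate case is the last: if occurrences at positions $i<j<k$ satisfy that $(i,j)$ and $(j,k)$ are each assemblable, then $(i,k)$ is \emph{not} directly assemblable because the letter $v$ at position $j$ blocks it (here $(v,v)\notin E\Gamma$, since $E\Gamma$ is diagonal-free), yet after merging away the occurrence at $j$ the remaining pair becomes assemblable; checking that both orders of merging land at the same class is exactly where the diagonal-freeness of $E\Gamma$ is used.

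Finally, for the uniqueness of the permutation $\sigma$, I would first record that commutation swaps never reverse the relative order of two equal letters: if $w_k = w_l$ with $k < l$ in a reduced word, reducedness supplies a blocker $w_p$ with $(w_k, w_p) \notin E\Gamma$, and since two occurrences of the same vertex are never adjacent-swappable (again $(w_k,w_k)\notin E\Gamma$), these occurrences can never cross during a sequence of swaps. Consequently, matching the $m$-th occurrence of each vertex in $\mbf{w}$ with the $m$-th occurrence of that vertex in $\mbf{w}'$ defines a permutation $\sigma$ meeting the monotonicity requirement that $w_k = w_l,\ k<l$ forces $\sigma(k) < \sigma(l)$, and any $\sigma$ realizing $\mbf{w}' = (w_{\sigma(1)},\dots,w_{\sigma(n)})$ while respecting the order of equal letters must agree with this one, yielding uniqueness.
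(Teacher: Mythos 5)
The paper does not prove this proposition at all: it is quoted verbatim from the literature (Green's thesis and Caspers--Fima), so there is no in-paper argument to compare against. Judged on its own, your proposal is a sound and essentially complete route to the result, and it is a genuinely different (more ``rewriting-theoretic'') argument than the direct inductions one usually finds in those sources. Part (1) is fine: the negation of reducedness hands you a pair of equal letters with all intermediate letters commuting past them, so you can assemble and merge, and length strictly drops. For part (2), the decomposition into the length-preserving swap equivalence $\equiv_c$ and a terminating merge rewriting $\Rightarrow$ on $\equiv_c$-classes, followed by Newman's lemma, is exactly the Cartier--Foata-style normal form argument for trace monoids, and it correctly reduces everything to local confluence; you have also correctly identified the one nontrivial critical pair (three occurrences of the same vertex chained by two overlapping merge sites, where the middle occurrence blocks the outer pair because $(v,v)\notin E\Gamma$) and why both orders of merging land in the same class after one more step. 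Two small things you should make explicit if you write this up: first, that a merge site on a $\equiv_c$-class is invariantly described as a pair of occurrences of one vertex with every intermediate letter commuting with it (so that ``reduced'' really is the same as ``$\Rightarrow$-irreducible'', which needs the converse observation that a non-commuting intermediate letter can never be moved out from between the two occurrences); and second, the remaining easy cases of the critical-pair analysis (distinct vertices, disjoint occurrences) should actually be checked rather than just named. Your non-crossing argument for the uniqueness of $\sigma$ is correct and does not even need the ``blocker'' supplied by reducedness --- it only needs that equal letters are never adjacent-swappable.
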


%


\section{Lifting selective commuting relations}\label{lscr}

\subsection{The pincushion class}\label{pincushion} We begin this section by defining the \emph{pincushion class} of graphs to which our main result applies.  Let $\ms{G}$ denote the collection of all finite simplicial graphs.  

\begin{dfn}Let $\ms{G}^{(1)}$ denote the class of finite simplicial graphs where each $\Gamma \in \ms{G}^{(1)}$ is obtained by recursively adding a vertex and adhering to one of the following two rules.
\begin{enumerate}
\item The new vertex is isolated;
\item The new vertex is adjacent to exactly one vertex.
\end{enumerate}
\end{dfn}

\begin{dfn}
Given $\Gamma \in \ms{G}$, a \emph{pin} of $\Gamma$ is a vertex $v \in V$ that is adjacent to exactly one other vertex $w$.  That is, there is a unique $w \in V\Gamma$ such that $(v,w) \in E\Gamma$.  Let $P\Gamma$ denote the (possibly empty) set of all pins of $\Gamma$.
\end{dfn}

\noin So rule (2) above describes forming a pin. For example, the following graph is contained in $\ms{G}^{(1)}$, and the gray vertices are pins of the graph.

\vspace{.2cm}
\begin{center}
\begin{tikzpicture}
\path (-3,0) node [draw,shape=circle,fill=gray] (p0) {}
(-1,0) node [draw,shape=circle,fill=black]  (p1) {}
(1,0) node [draw,shape=circle,fill=black] (p2) {}
(3,0) node [draw,shape=circle,fill=gray] (p3){};
\draw (p0) -- (p1)
(p2) -- (p3)
(p1) -- (p2);
\end{tikzpicture}
\end{center}
\vspace{.2cm}

To define the pincushion class of graphs, we generalize the construction of $\ms{G}^{(1)}$ by introducing the following \tql pinning\tqr operation on $\ms{G}$.  

\begin{dfn}Given $\Gamma \in \ms{G}$, fix $v \in V\Gamma$ and define $\Phi_{(\Gamma, v)}: \ms{G} \rightarrow \ms{G}$ as follows. For $\Gamma' \in \ms{G}$, put
\begin{align*}
V\Phi_{(\Gamma, v)}(\Gamma') &:= V\Gamma \sqcup V\Gamma'\\
E\Phi_{(\Gamma,v)}(\Gamma') &:= E\Gamma \sqcup E\Gamma' \sqcup \left\{(w,v): w \in V\Gamma'\right\}
\end{align*}
We call this \emph{pinning $\Gamma'$ to $\Gamma$ at $v$}.
\end{dfn}

\begin{exmpl}
We illustrate this pinning operation in this example.  Let $\Gamma$ be given by the following graph, and let $v \in V\Gamma$ be as indicated below.

\begin{center}
\begin{tikzpicture}
\path (1,0) node [draw,shape=circle,] (p2) {$v$}
(3,1) node [draw,shape=circle,fill=black] (p3){}
(3,-1) node [draw,shape=circle,fill=black] (p4){};
\draw 
(p2) -- (p3)
(p2) -- (p4)
(p3) -- (p4);
\end{tikzpicture}
\end{center}

Let $\Gamma'$ be given by the following graph.

\begin{center}
\begin{tikzpicture}
\path (0,-1) node [draw,shape=circle,fill=gray] (p0) {}
(0,0) node [draw,shape=circle,fill=gray]  (p1) {}
(0,1) node [draw,shape=circle,fill=gray] (p2) {};
\draw (p0) -- (p1)
(p1) -- (p2);
\end{tikzpicture}
\end{center}

Then $\Phi_{(\Gamma,v)}(\Gamma')$ is given as follows.

\begin{center}
\begin{tikzpicture}
\path (-1,-1) node [draw,shape=circle,fill=gray] (q0) {}
(-1,0) node [draw,shape=circle,fill=gray]  (q1) {}
(-1,1) node [draw,shape=circle,fill=gray] (q2) {}
(1,0) node [draw,shape=circle,] (p2) {$v$}
(3,1) node [draw,shape=circle,fill=black] (p3){}
(3,-1) node [draw,shape=circle,fill=black] (p4){};
\draw (q0) -- (p2)
(q1) -- (p2)
(q2) -- (p2)
(q0) -- (q1)
(q1) -- (q2)
(p2) -- (p3)
(p2) -- (p4)
(p3) -- (p4);
\end{tikzpicture}
\end{center}

\end{exmpl}

Using this pinning operation, we recursively construct the classes $\ms{G}^{(m)}$ for $m \in \mb{N}$.  

\begin{dfn}\label{gm}
Assume $\ms{G}^{(m)}$ has been formed; define $\ms{G}^{(m+1)}$ as follows.  A graph $\Gamma \in \ms{G}^{(m+1)}$ is obtained by recursively appending a graph from $\ms{G}^{(m)}$ and adhering to one of the following two rules.

\begin{enumerate}
\item The appended graph is isolated from the graph obtained in the previous step;
\item The appended graph is pinned to the graph from the previous step at some vertex.
\end{enumerate}
\end{dfn}

\begin{rmk}
We observe here that if we define $\ms{G}^{(0)}$ to be the set of finite simplicial graphs consisting only of the single vertex graph, then the above recursive definition for $m = 0$ recovers the definition of $\ms{G}^{(1)}$.  So the collections $\ms{G}^{(m)}$ can be considered as graded generalizations of $\ms{G}^{(1)}$ via the pinning operation.
\end{rmk}

Note that $\ms{G}^{(m-1)} \subset \ms{G}^{(m)}$, and the following proposition shows that this containment is strict.

\begin{prop}
For every $m \in \mb{N}$, $\ms{G}^{(m)} \setminus \ms{G}^{(m-1)} \neq \emptyset.$ In particular, if $K_{m+1}$ denotes the complete graph with $m+1$ vertices, then $K_{m+1} \in \ms{G}^{(m)}\setminus \ms{G}^{(m-1)}$.
\end{prop}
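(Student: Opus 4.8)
The plan is to prove the two assertions in order: that $K_{m+1}$ belongs to $\ms{G}^{(m)}$, and that it does not belong to $\ms{G}^{(m-1)}$. The second, separating statement is where the content lies, and for it I would isolate a numerical invariant of a finite simplicial graph that is controlled by the pinning construction, namely the \emph{clique number} $\omega(\Gamma)$, the size of a largest complete subgraph of $\Gamma$. The heart of the argument is the claim that every $\Gamma \in \ms{G}^{(m)}$ satisfies $\omega(\Gamma) \leq m+1$, together with the elementary fact that $\omega(K_{m+1}) = m+1$.

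For the membership $K_{m+1} \in \ms{G}^{(m)}$ I would argue by induction on $m$, the base case being the single vertex $K_1$ generating $\ms{G}^{(0)}$. Assuming $K_m \in \ms{G}^{(m-1)}$, I would begin the $\ms{G}^{(m)}$-construction from a single vertex $v$ (which lies in $\ms{G}^{(0)} \subset \ms{G}^{(m-1)}$) and then pin the graph $K_m \in \ms{G}^{(m-1)}$ to it at $v$. A direct check of the vertex and edge sets in the definition of $\Phi_{(\{v\}, v)}(K_m)$ shows that every one of the $m$ vertices of $K_m$ becomes adjacent to $v$ while all edges of $K_m$ are retained, so the result is exactly $K_{m+1}$; this exhibits $K_{m+1}$ as a legal pinning step and hence as an element of $\ms{G}^{(m)}$.

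The main work is the clique-number bound, which I would establish by first recording how $\omega$ behaves under the two gluing moves. Under disjoint union (the ``isolated'' rule) one has $\omega(\Gamma \sqcup \Gamma') = \max(\omega(\Gamma), \omega(\Gamma'))$. Under pinning, the key structural point is that the only edges joining $V\Gamma'$ to $V\Gamma$ in $\Phi_{(\Gamma,v)}(\Gamma')$ run to the single vertex $v$; hence any complete subgraph meeting $V\Gamma'$ can meet $V\Gamma$ only in $\{v\}$, which yields $\omega(\Phi_{(\Gamma,v)}(\Gamma')) = \max(\omega(\Gamma), 1 + \omega(\Gamma'))$. With these two formulas in hand I would run a double induction: the outer induction on $m$ supplies the hypothesis $\omega(H) \leq m$ for every piece $H \in \ms{G}^{(m-1)}$ from which a graph $\Gamma \in \ms{G}^{(m)}$ is built, and an inner induction on the number of gluing steps propagates the bound $\omega \leq m+1$, since once $\omega \leq m+1$ holds neither $\max(\omega, m)$ (disjoint union) nor $\max(\omega, 1+m)$ (pinning) can exceed $m+1$.

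Finally I would combine the pieces: the bound gives $\omega(\Gamma) \leq m$ for every $\Gamma \in \ms{G}^{(m-1)}$, whereas $\omega(K_{m+1}) = m+1$, so $K_{m+1} \notin \ms{G}^{(m-1)}$; together with $K_{m+1} \in \ms{G}^{(m)}$ this gives $\ms{G}^{(m)} \setminus \ms{G}^{(m-1)} \neq \emptyset$. I expect the only genuine obstacle to be the pinning formula for $\omega$, specifically the verification that a clique cannot spread from $\Gamma'$ into $\Gamma$ past the pin vertex $v$; once that is in place, both inductions are routine bookkeeping.
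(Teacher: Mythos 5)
Your proposal is correct, and for the crucial non-membership half it takes a genuinely different route from the paper. The membership half ($K_{m+1}=\Phi_{(K_1,v)}(K_m)$ with $K_m\in\ms{G}^{(m-1)}$) is identical to the paper's. For non-membership, the paper argues by contradiction on the structure of the \emph{last} step of a hypothetical $\ms{G}^{(m-1)}$-construction of $K_{m+1}$: the last pinned piece $\Gamma_0\in\ms{G}^{(m-2)}$ must be a complete proper subgraph, hence (using the inductive hypothesis $K_m\notin\ms{G}^{(m-2)}$) misses some earlier vertex $v_1\neq v_0$, and since pinning only creates edges to the pin vertex $v_0$, no vertex of $\Gamma_0$ is adjacent to $v_1$, contradicting completeness. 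Your argument instead extracts a numerical invariant, the clique number, and proves the uniform bound $\omega(\Gamma)\leq m+1$ for all $\Gamma\in\ms{G}^{(m)}$ via the two formulas $\omega(\Gamma\sqcup\Gamma')=\max(\omega(\Gamma),\omega(\Gamma'))$ and $\omega(\Phi_{(\Gamma,v)}(\Gamma'))=\max(\omega(\Gamma),1+\omega(\Gamma'))$; the key verification that a clique meeting $V\Gamma'$ can meet $V\Gamma$ only in $\{v\}$ is exactly the same structural observation the paper uses (cross edges only run to the pin), but you package it as a monotone invariant rather than a one-off contradiction. Your route is slightly longer to set up (a double induction versus a single one) but buys more: it does not need the inductive non-membership of $K_m$ in $\ms{G}^{(m-2)}$ as an input to the separation step, it separates all levels simultaneously, and it gives a reusable necessary condition ($\omega\leq m+1$) for membership in $\ms{G}^{(m)}$ that applies to graphs other than complete ones. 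Both proofs are sound; yours is the more systematic, the paper's the more economical.
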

\begin{proof}
We proceed by induction on $m$.  The base case of $m=1$ is clear.  Let $m >1$ and assume that for $1 \leq k < m$, we have $K_{k+1} \in \ms{G}^{(k)}\setminus \ms{G}^{(k-1)}$. Since $K_{m+1} = \Phi_{(K_1,v_1)}(K_m)$ where $VK_1 = \left\{v_1\right\}$ and $K_m \in \ms{G}^{(m-1)}$ by the induction hypothesis, we have that $K_{m+1} \in \ms{G}^{(m)}$. 

Suppose for the sake of contradiction that $K_{m+1} \in \ms{G}^{(m-1)}$. Since every pair of vertices in $K_{m+1}$ is adjacent, $K_{m+1}$ is obtained by using rule (2) of Definition \ref{gm} exclusively.  Let $v_0 \in VK_{m+1}$ denote the vertex of $K_{m+1}$ to which a graph $\Gamma_0 \in \ms{G}^{(m-2)}$ was pinned in the last step of the construction of $K_{m+1}$.  The graph $\Gamma_0$ is a subgraph of $K_{m+1}$ so it must be complete too. Since $K_m \notin \ms{G}^{(m-2)}$ by the induction hypothesis, then $\Gamma_0 \neq K_m$.  Thus there must be a vertex $v_1 \in VK_{m+1}$ different from $v_0$ such that $v_1 \notin V\Gamma_0$.  Hence $v_1$ must have been present in the construction process before $\Gamma_0$ was pinned to $v_0$.  But this means that no vertex in $\Gamma_0$ is adjacent to $v_1$, contradicting the completeness of $K_{m+1}$.  Thus $K_{m+1} \notin \ms{G}^{(m-1)}$.
\end{proof}

%

We are now prepared to define the pincushion class of finite simplicial graphs.

\begin{dfn}\label{pcdef}
The \emph{pincushion class} of finite simplicial graphs is denoted $\ms{G}^{(\infty)}$ and is given by \[\ms{G}^{(\infty)} := \bigcup_{m \in \mb{N}} \ms{G}^{(m)}.\]
\end{dfn}

\noin In addition to containing all complete graphs, the pincushion class also contains all finite trees.  This is a straightforward exercise left to the reader.

\subsection{Main result}\label{mainresult}

Proposition \ref{freeprop} and Theorem \ref{tensorthm} show that in certain circumstances, tracial stability is preserved under free products in general and tensor products with commutative $C^*$-algebras.  When a property behaves well with free and tensor products, it is natural to ask about how it behaves with graph products.  If the graph comes from the pincushion class and certain component algebras are commutative, then the corresponding graph product indeed preserves tracial stability.  For the remainder of this section, let $\ms{C}$ be a class unital \rrz $C^*$-algebras closed under taking direct sums and unital corners. The following theorem can be considered as a generalization of Theorem \ref{tensorthm}.   

\begin{thm}\label{pcts}
Let $\Gamma \in \ms{G}^{(\infty)}$ be a pincushion class graph.  For each $v \in V\Gamma$, let $\mc{A}_v$ be a separable unital $C^*$-algebra satisfying the following properties.
\begin{enumerate}
\item If $v \in V\Gamma$ is a pin not adjacent to another pin, then $\mc{A}_v$ is $\ms{C}$-tracially stable;
\item If $v \in V\Gamma$ is isolated, then $\mc{A}_v$ is $\ms{C}$-tracially stable.
\item If $v \in V\Gamma$ fits neither of the above descriptions, then $\mc{A}_v = C(X_v)$ for some compact Hausdorff space $X_v$.
\end{enumerate}
Then $\bigstar_\Gamma \mc{A}_v$ is $\ms{C}$-tracially stable.
\end{thm}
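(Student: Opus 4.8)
The plan is to induct on the recursive construction of $\Gamma$ in the pincushion class, peeling off one appended piece at a time and feeding each step into the free-product and tensor-product results already available. Fix a presentation of $\Gamma$ as being built from smaller pincushion graphs by the two operations of Definition \ref{gm}: appending a graph in isolation, or pinning it at a vertex. The base case is the single-vertex graph, where the lone vertex is isolated and its algebra is $\ms{C}$-tracially stable by hypothesis (2). For the inductive step write $\Gamma$ as obtained from $\Gamma_0$ by adjoining $\Gamma'$. If $\Gamma'$ is appended in isolation, then $\bigstar_\Gamma \mc{A}_v \cong (\bigstar_{\Gamma_0}\mc{A}_v)\ast(\bigstar_{\Gamma'}\mc{A}_v)$; since a disjoint union changes no vertex's degree or adjacencies, hypotheses (1)--(3) restrict verbatim to $\Gamma_0$ and $\Gamma'$, so both factors are $\ms{C}$-tracially stable by induction and Proposition \ref{freeprop} closes this case.

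The substantive case is pinning, $\Gamma = \Phi_{(\Gamma_0,v)}(\Gamma')$. Here I would first record a structural decomposition. Because every vertex of $\Gamma'$ is adjacent to $v$ and to no other vertex of $\Gamma_0$, the subalgebra generated by $\mc{A}_v$ and $\{\mc{A}_w : w\in V\Gamma'\}$ is $\mc{A}_v\otimes_{\max}\bigstar_{\Gamma'}\mc{A}_w$, while $\{v\}$ is a (trivially complete) separating clique; the standard clique-amalgamation description of graph products then yields
\[ \bigstar_\Gamma \mc{A}_v \;\cong\; \Bigl(\mc{A}_v\otimes_{\max}\bigstar_{\Gamma'}\mc{A}_w\Bigr)\;\ast_{\mc{A}_v}\;\bigstar_{\Gamma_0}\mc{A}_v. \]
A short degree count shows that in every pinning instance $v$ has degree at least two in $\Gamma$ (the only alternative produces a $K_2$-component), so by hypothesis (3) $\mc{A}_v = C(X_v)$ is \emph{commutative}: the amalgamation is over a commutative algebra and one leg is a tensor product with $C(X_v)$. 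One also checks, absorbing any $K_2$-component pin of $\Gamma_0$ at $v$ into the pinned piece $\Gamma'$, that the hypotheses descend to $\Gamma_0$ and $\Gamma'$, so $\bigstar_{\Gamma_0}\mc{A}_v$ and $\bigstar_{\Gamma'}\mc{A}_w$ are $\ms{C}$-tracially stable by induction. The theorem thus reduces to an amalgam statement that simultaneously absorbs Theorem \ref{tensorthm}: \emph{if $\mc{D}'$ and $\mc{N}$ are $\ms{C}$-tracially stable and $C(X)\subseteq\mc{N}$ is a unital commutative subalgebra, then $(C(X)\otimes_{\max}\mc{D}')\ast_{C(X)}\mc{N}$ is $\ms{C}$-tracially stable.}

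To prove the amalgam statement I would mimic the proof of Theorem \ref{tensorthm} through the finite criterion of Lemma \ref{techlemma}. Given a unital $\pi$ into an ultraproduct $(\mc{A}_k,\tau_k)^\mc{U}$ with $\mc{A}_k\in\ms{C}$, first lift the restriction to $\mc{N}$ using its stability, obtaining genuine $\ast$-homomorphisms on $\mc{N}$, and in particular a lift of $C(X)$. Using the \rrz hypothesis on each $\mc{A}_k$, approximate the image of $C(X)$ in $\|\cdot\|_{2,\tau_k}$ by a finite-dimensional commutative subalgebra with minimal projections $e_1^k,\dots,e_{m_k}^k$; its relative commutant is $\bigoplus_l e_l^k\mc{A}_k e_l^k$, which lies in $\ms{C}$ since $\ms{C}$ is closed under direct sums and unital corners. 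Lifting $\mc{D}'$ into this relative commutant (possible by $\ms{C}$-tracial stability of $\mc{D}'$) produces a $\ast$-homomorphism whose image commutes with the chosen finite-dimensional copy of $C(X)$ and is otherwise free from $\mc{N}$, realizing the tensor leg; assembling the two legs over $C(X)$ via the amalgamated universal property gives the required approximate lift. The commutative case of $\mc{D}'$ or $\mc{N}$, needed to build up the commutative ``core'' of $\Gamma$, is covered by Theorem \ref{commts}.

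The step I expect to be the real obstacle is the passage from \emph{approximate} to \emph{exact} commutation inside $\mc{A}_k$: the amalgamated-free-product property assembles a genuine $\ast$-homomorphism on $\bigstar_\Gamma\mc{A}_v$ only if the chosen lifts commute on the nose over $C(X)$, yet the lift of $C(X)$ coming from $\mc{N}$ need not be measurable with respect to the finite-dimensional cut used to place $\mc{D}'$. Reconciling these — choosing a single finite-dimensional approximation of the image of each amalgamating commutative vertex that is simultaneously compatible with the $\mc{N}$-lift and with every noncommutative piece (the pins) attached there, and only then cutting $\mc{A}_k$ into corners and lifting the remaining pieces — is the delicate bookkeeping at the heart of the argument, and is exactly what forces the commutativity hypothesis (3) on every non-pin, non-isolated vertex.
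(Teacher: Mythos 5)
Your structural reduction is sound and close in spirit to the paper's: the induction on the pincushion construction, the free-product case for an isolated append, the identification of the pinned piece as sitting in tensor position over a commutative vertex algebra, and the check that the hypotheses descend to $\Gamma_0$ and $\Gamma'$ all match what the paper does (its Case I is exactly your collapse of the last pinned subgraph to a single tracially stable ``pin'' vertex). But the proof is not complete: the entire analytic content of the theorem is the step you flag in your final paragraph and then leave unresolved. The amalgamated-free-product universal property demands that the lift of $C(X)\otimes_{\max}\mc{D}'$ and the lift of $\mc{N}$ agree \emph{exactly} on $C(X)$, and there is no reason the lift of $C(X)$ obtained by restricting a lift of $\mc{N}$ should have finite spectrum or commute with the corners $e_l^k\mc{A}_k e_l^k$ into which you want to place $\mc{D}'$. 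Worse, your intermediate ``amalgam statement'' is posed for an arbitrary commutative unital subalgebra $C(X)\subseteq\mc{N}$ of an arbitrary $\ms{C}$-tracially stable $\mc{N}$; at that level of generality there is no mechanism for re-choosing the lift of $C(X)$ to be finite dimensional while keeping it the restriction of a lift of all of $\mc{N}$, so the lemma is not established (and it is unclear that it is even true --- it forgets precisely the structure that makes the argument work).

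The paper resolves this by never invoking the amalgamated universal property and by not asking the final lift to agree with $\pi$ on $\mc{N}$ or on $C(X_{v_1})$ at all: by Lemma \ref{techlemma} one only needs $2$-norm approximation on a finite set. Concretely, one approximates the finitely many relevant elements of $\pi(C(X_{v_1}))$ by simple functions $\sum_d \pi(a)(\omega_d)\chi_{E_d}$, then \emph{replaces} the vertex algebra $\mc{A}_{v_1}$ by the finite-dimensional algebra $C^*(\chi_{E_1},\dots,\chi_{E_m})$ inside the graph product over $\Gamma_2$ (the graph $\Gamma$ with $v_0$ deleted), and applies the induction hypothesis to $\bigstar_{\Gamma_2}\mc{B}_v$ to lift that entire modified graph product. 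This is the move your abstract lemma cannot make: because $C(X_{v_1})$ is a free coordinate of the graph product rather than just some subalgebra of $\mc{N}$, it can be swapped out, and the resulting lift produces projections $P_{d,k}$ that commute \emph{on the nose} with the lifts of every algebra adjacent to $v_1$. Only then is $\mc{A}_{v_0}$ lifted into $\bigoplus_d P_{d,k}M_kP_{d,k}$ using its $\ms{C}$-tracial stability, the map on $\mc{A}_{v_1}$ is taken to be $a\mapsto\sum_d\pi(a)(\omega_d)P_{d,k}$, and the plain graph-product universal property (which needs only commutation, not agreement on an amalgam) assembles $\pi_k$. If you want to salvage your write-up, replace the amalgam lemma by this finite-spectrum perturbation-and-replacement argument; as it stands the proposal names the obstacle but does not cross it.
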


\begin{proof}
Hadwin-Shulman's proof of Theorem \ref{tensorthm} in \cite{hadshu} serves as the inspiration for this proof.  

We proceed by induction on $|V\Gamma |$.  The base case follows tautologically. Let $K >1$, and suppose that the theorem holds for $|V\Gamma| < K$.  By the induction hypothesis and Proposition \ref{freeprop}, the result clearly holds if $\Gamma$ is not connected.  Assume that $\Gamma$ is connected.  

\vspace{.2cm}

\noin \textbf{Case I}: $P\Gamma = \emptyset$.  There is a construction of $\Gamma$ in which the last step is to pin a subgraph $\Gamma_1 \in \ms{G}^{(\infty)}$ with $|V\Gamma_1| > 1$ to a vertex $v_1$.  By the induction hypothesis, $\bigstar_{\Gamma_1}\mc{A}_w$ is $\ms{C}$-tracially stable.  Consider the graph $\Gamma'$ given by taking $\Gamma$ and collapsing $\Gamma_1$ to a single vertex $v'$ with an edge connecting $v'$ to $v_1$. Thus, $v'$ is a pin of $\Gamma'$.  For each $v \in V\Gamma'$, put \[\mc{A}'_v := \left\{\begin{array}{lcr}
\mc{A}_v & \text{if} & v \neq v'\\
\bigstar_{\Gamma_1}\mc{A}_w & \text{if} & v = v'.
\end{array}\right.\] Then by the induction hypothesis, we have \[\bigstar_\Gamma \mc{A}_v = \bigstar_{\Gamma'} \mc{A}'_v\] is $\ms{C}$-tracially stable.

\noin \textbf{Case II}: $P\Gamma \neq \emptyset$.  Let $v_0 \in P\Gamma$ be a pin of $\Gamma$.  Let $v_1$ be the (unique) vertex adjacent to $v_0$.  If $v_1$ is also a pin, then the subgraph $\Gamma_{01}$ formed by $v_0, v_1$, and the edge joining them is isolated from the complementary subgraph $\Gamma_c$ of $\Gamma$.  Then \[\bigstar_\Gamma \mc{A}_v = (\bigstar_{\Gamma_{01}}\mc{A}_v) * (\bigstar_{\Gamma_c}\mc{A}_v) = (C(X_{v_0}) \otimes C(X_{v_1})) * (\bigstar_{\Gamma_c} \mc{A}_v)\] is $\ms{C}$-tracially stable by Theorem \ref{commts}, Proposition \ref{freeprop}, and the induction hypothesis.  

If $v_1$ is not a pin, then $\mc{A}_{v_0}$ is $\ms{C}$-tracially stable, and $\mc{A}_{v_1} = C(X_{v_1})$ for some compact Hausdorff $X_{v_1}$. For each $k \in \mb{N}$ let $M_k$ be a $C^*$-algebra in $\ms{C}$, and let $\tau_k$ be a tracial state on $M_k$.  Fix a unital $*$-homomorphism $\pi: \bigstar_\Gamma \mc{A}_v \rightarrow (M_k,\tau_k)^\mc{U}$.  As in the proof of Theorem \ref{tensorthm} in \cite{hadshu}, we will use Lemma \ref{techlemma} to show that $\pi$ is approximately liftable. Fix $\varepsilon > 0$ and contractions $x_1,\dots x_n \in \bigstar_\Gamma \mc{A}_v$.  We set out to find $*$-homomorphisms $\pi_k: \bigstar_\Gamma \mc{A}_v \rightarrow M_k$ such that 
\begin{align}
||(\pi_k(x_j))_\mc{U} - \pi(x_j)||_{2,\mc{U}} < 3\varepsilon \label{tol}
\end{align}
for every $1 \leq j \leq n$.  For each $1 \leq j \leq n$ there are elements $w_i^{(j)} \in \bigstar_\Gamma \mc{A}_v$ of the form 
\begin{align}
w_i^{(j)} = a(j,i,1)\cdots a(j,i,s_i^{(j)})\label{decomp}
\end{align} where $a(j,i,l) \in (\mc{A}_{v(j,i,l)})_{\leq 1}$ for $1 \leq l \leq s_i^{(j)}$ and $(v(j,i,1),\cdots, v(j,i,s_i^{(j)}))$ is reduced with the property that \[\left\|x_j - \sum_{i=1}^{N_j} w_i^{(j)}\right\| < \varepsilon\] (cf. \S\S 1.1 in \cite{gpucp}).    
The image $\pi(\mc{A}_{v_1}) = \pi(C(X_{v_1}))$ is commutative, so let $\pi(C(X_{v_1})) = C(\Omega)$ for some compact Hausdorff $\Omega$.  Let $a_1(j_1,i_1,l_1),\dots a_1(j_T,i_T,l_T)$ be the elements of $\mc{A}_{v_1}$ appearing in the above decompositions of the $w_i^{(j)}$'s, and for each $1 \leq j \leq n, 1 \leq i \leq N_j$ let $S_i^{(j)}$ denote the number of elements in $\left\{a_1(j_1,i_1,l_1),\dots a_1(j_T,i_T,l_T)\right\}$  appearing in the above decomposition of $w_i^{(j)}$. Put 
\begin{align*}
N&:= \max_{1 \leq j \leq n} N_j\\
&\text{and}\\
S&:= \max_{\substack{1 \leq j \leq n \\ 1\leq i \leq N_j}} S_i^{(j)}.
\end{align*}
We can approximate each of $a_1(j_1,i_1,l_1),\dots a_1(j_T,i_T,l_T)$ using simple functions.  That is, there is a disjoint collection $\left\{E_1,\dots, E_m\right\}$ of Borel subsets of $\Omega$ whose union is $\Omega$, and for each $1 \leq d \leq m$, there is an element $\omega_d \in E_d$ such that 
\begin{align}
\left\| \pi(a_1(j_t,i_t,l_t)) - \sum_{d=1}^m \pi(a_1(j_t,i_t,l_t))(\omega_d)\chi_{E_d}\right\| < \frac{\varepsilon}{NS}\label{est}
\end{align} for $1 \leq t \leq T$.

 Let $\Gamma_2$ denote the subgraph of $\Gamma$ given by $V\Gamma_2 = V\Gamma \setminus\left\{v_0\right\}$ and $E\Gamma_2 = E\Gamma \setminus \left\{(v_0,v_1)\right\}$.  
 For each $v \in V\Gamma_2$, let $\mc{B}_v$ be defined as follows. \[\mc{B}_v := \left\{\begin{array}{lcr}
\mc{A}_v & \text{if} & v \neq v_1\\
C^*(\chi_{E_1},\cdots \chi_{E_d}) & \text{if} & v = v_1
\end{array}\right.\]
It is clear that $\Gamma_2 \in \ms{G}^{(\infty)}$. Indeed, in the construction of $\Gamma \in \ms{G}^{(\infty)}$, after $v_1$ appears, the step of pinning $v_0$ to $v_1$ is independent of the rest of the steps in the construction.  So by the induction hypothesis, $\bigstar_{\Gamma_2} \mc{B}_v$ is $\ms{C}$-tracially stable. Let $\rho=\bigstar_{\Gamma_2}\rho_v: \bigstar_{\Gamma_2}\mc{B}_v \rightarrow (M_k,\tau_k)^\mc{U}$ where 
\[\rho_v = \left\{\begin{array}{lcr}
\pi|_v & \text{if} & v \neq v_1\\
\text{id} & \text{if} &v = v_1.
\end{array}\right.\] 
Note that $\rho$ is well-defined because $C^*(\chi_{E_1},\dots ,\chi_{E_d}) \subset W^*(\pi(C(X_{v_1})))$, thus $\left\{\rho_v(\mc{B}_v)\right\}_{v \in V\Gamma_2}$ satisfies all requisite commuting relations. Hence by $\ms{C}$-tracial stability, for each $k \in \mb{N}$, we can find a unital $*$-homomorphism $\rho_k: \bigstar_{\Gamma_2} \mc{B}_v \rightarrow M_k$ such that $(\rho_k(a))_\mc{U} = \rho(a)$ for every $a \in \bigstar_{\Gamma_2}\mc{B}_v$, and for every $v \in V\Gamma_2$, let $\rho_{v,k} = \rho_k|_{\mc{B}_v}$. 

We now construct the desired sequence of $*$-homomorphisms $\left\{\pi_k\right\}$ satisfying \eqref{tol}. This follows an argument parallel to the one found in the proof of Theorem 2.7 of \cite{hadshu}. For each $1 \leq d \leq m$ and each $k \in \mb{N}$ let $P_{d,k}:= \rho_{v_1,k}(\chi_{E_d})$. Then we have $\chi_{E_d} = (P_{d,k})_\mc{U}$ and $\left\{P_{1,k},\dots P_{m,k}\right\}$ is a pairwise orthogonal partition of unity in $M_k$.  Observe that \[C^*(\chi_{E_1},\dots, \chi_{E_m}) \subset \pi(C^*(\mc{A}_{v_0}, C(X_{v_1})))'\cap (M_k,\tau_k)^\mc{U}.\] Therefore \[\pi(C^*(\mc{A}_{v_0}, C(X_{v_1}))) \subset \left(\sum_{d=1}^m P_{d,k}M_kP_{d,k}, \tau_k \right)^\mc{U} = \bigoplus_{d=1}^m\left(\left(P_{d,k}M_kP_{d,k},\tau_k\right)^\mc{U}\right).\] For $1 \leq d \leq m$, let $\varphi_d$ denote the projection onto the $d^\text{th}$ summand of \[\bigoplus_{d=1}^m\left(\left(P_{d,k}M_kP_{d,k},\tau_k\right)^\mc{U}\right).\] Since $\mc{A}_{v_0}$ is $\ms{C}$-tracially stable, for each $k \in \mb{N}$ there is a unital $*$-homomorphism \[\pi_{v_0,d,k}: \mc{A}_{v_0} \rightarrow P_{d,k}M_kP_{d,k}\] such that $(\pi_{v_0,d,k}(a))_\mc{U} = \varphi_d \circ \pi(a)$ for every $a \in \mc{A}_{v_0}$.  Define \[\pi_{v_0,k}:= \bigoplus_{d=1}^m \pi_{v_0,d,k}.\]  For each $k \in \mb{N}$, define \[\pi_{v_1,k}(a) = \sum_{d=1}^m \pi(a)(\omega_d)P_{d,k},\] and for $v \in V\Gamma\setminus \left\{v_0,v_1\right\}$, set \[\pi_{v,k} = \rho_{v,k}.\] By construction, for each $k \in\mb{N}$, the images of the $\pi_{v,k}$'s satisfy the commuting relations prescribed by $\Gamma$. So for each $k \in \mb{N}$, we can define \[\pi_k = \bigstar_\Gamma \pi_{v,k}.\] We claim that these $\pi_k$'s satisfy \eqref{tol}.  For $1 \leq j \leq n$ we have
\begin{align}
\|(\pi_k(x_j))_\mc{U} - \pi(x_j)\|_2 &\leq 2\varepsilon + \left|\left|\sum_{i=1}^{N_j}\left(\left(\pi_k\left(w_i^{(j)}\right)\right)_\mc{U} - \pi\left(w_i^{(j)}\right)\right)\right|\right|_2\notag\\
&< 2\varepsilon + \sum_{i=1}^{N_j}\left|\left|\left(\left(\pi_k\left(w_i^{(j)}\right)\right)_\mc{U} - \pi\left(w_i^{(j)}\right)\right)\right|\right|_2\notag\\
& < 2\varepsilon + \sum_{i = 1}^{N_j} S_i^{(j)} \frac{\varepsilon}{NS}\label{why}\\
&\leq3\varepsilon.\notag
\end{align}
Line \eqref{why} follows from a straightforward exercise applying \eqref{est} and iterating the following standard approximation technique.
\begin{align*}
||xyzyw - xy'zy'w||_2 & = ||xyzyw - xy'zyw + xy'zyw - xy'zy'w||_2\\
&\leq ||xyzyw - xy'zyw||_2 + ||xy'zyw - xy'zy'w||_2\\
&\leq ||x||\cdot ||y-y'||_2 \cdot ||zyw|| + ||xy'z||\cdot ||y-y'||_2 \cdot ||w||
\end{align*}
This completes the proof.
\end{proof}

In light of Proposition \ref{gpprop}, we record the analogous group-theoretic result (again, using the terminology from \cite{beclub}) as follows.

\begin{cor}
Let $\Gamma \in \ms{G}^{(\infty)}$ be a pincushion class graph.  For each $v \in V\Gamma$, let $G_v$ be a countable discrete group satisfying the following properties.
\begin{enumerate}
\item If $v \in V\Gamma$ is a pin not adjacent to another pin, then $G_v$ is HS-stable;
\item If $v \in V\Gamma$ is isolated, then $G_v$ is HS-stable.
\item If $v \in V\Gamma$ fits neither of the above descriptions, then $G_v$ is abelian.
\end{enumerate}
Then $\bigstar_\Gamma G_v$ is HS-stable.  In particular, the graph product of abelian groups over a pincushion class graph is HS-stable.
\end{cor}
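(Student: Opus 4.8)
The plan is to pass through the $C^*$-algebraic dictionary of Proposition \ref{gpprop} and then invoke Theorem \ref{pcts}. Writing $\mc{A}_v := C^*(G_v)$ (separable since each $G_v$ is countable), the first step is to identify $C^*\!\left(\bigstar_\Gamma G_v\right) \cong \bigstar_\Gamma \mc{A}_v$. This is a matching of universal properties: a unitary representation of $\bigstar_\Gamma G_v$ is precisely a family of unitary representations of the $G_v$ whose ranges commute according to $\Gamma$, which is in turn precisely a family of unital $*$-representations of the $\mc{A}_v$ commuting according to $\Gamma$, i.e. a single representation of the universal graph product $\bigstar_\Gamma \mc{A}_v$ of Definition \ref{unidef}. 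Both sides therefore satisfy the same universal property and are canonically isomorphic.

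Next I would verify that the three hypotheses on the groups $G_v$ are exactly the three vertex hypotheses of Theorem \ref{pcts} for the algebras $\mc{A}_v$. For a pin not adjacent to another pin, or an isolated vertex, Proposition \ref{gpprop} turns HS-stability of $G_v$ into \textbf{matrix}-tracial stability of $\mc{A}_v$, matching cases (1) and (2). For the remaining vertices, a countable abelian $G_v$ has $\mc{A}_v = C^*(G_v) = C(\hat{G}_v)$ with $\hat{G}_v$ a compact Hausdorff space (its Pontryagin dual), matching case (3). Applying Theorem \ref{pcts} then gives tracial stability of $\bigstar_\Gamma \mc{A}_v \cong C^*\!\left(\bigstar_\Gamma G_v\right)$, and the reverse implication of Proposition \ref{gpprop} reads this back as HS-stability of $\bigstar_\Gamma G_v$. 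For the final clause, a commutative $\mc{A}_v$ is tracially stable with respect to any class of real-rank-zero algebras by Theorem \ref{commts}; hence an abelian $G_v$ satisfies all three vertex hypotheses at once, and the statement for graph products of abelian groups requires no separate argument.

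The step I expect to be the genuine obstacle is the bookkeeping of the target class. Proposition \ref{gpprop} is phrased in terms of the class \textbf{matrix}, but \textbf{matrix} is not closed under direct sums, whereas Theorem \ref{pcts} assumes a class $\ms{C}$ of real-rank-zero algebras closed under direct sums and unital corners; so one cannot quote Theorem \ref{pcts} with $\ms{C} = \textbf{matrix}$ verbatim. The way I would resolve this is to inspect the proof of Theorem \ref{pcts} rather than cite it as a black box: every lifting produced there takes values either in a target algebra $M_k$ or in one of its unital corners $P_{d,k}M_kP_{d,k}$, and a unital corner of a matrix algebra is again a matrix algebra. Thus, run with matrix targets, the induction consumes exactly the \textbf{matrix}-tracial stability of the vertex algebras $\mc{A}_{v_0}$ and $\bigstar_{\Gamma_2}\mc{B}_v$ that we have available, and outputs \textbf{matrix}-tracial stability of the graph product. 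The care lies in checking that this specialization is consistent at every stage of the induction — in particular that the auxiliary finite-dimensional commutative algebra $C^*(\chi_{E_1},\dots,\chi_{E_m})$ inserted at $v_1$ keeps $\bigstar_{\Gamma_2}\mc{B}_v$ within the scope of the inductive hypothesis for the class \textbf{matrix}. Alternatively, one could absorb this point once and for all by proving that, for separable unital $C^*$-algebras, \textbf{matrix}-tracial stability coincides with tracial stability for the class of all finite-dimensional $C^*$-algebras, after which Theorem \ref{pcts} applies directly with $\ms{C}$ the latter class.
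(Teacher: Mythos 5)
Your proposal is correct and is precisely the argument the paper intends: the corollary is stated without proof as an immediate consequence of Proposition \ref{gpprop}, the isomorphism $C^*(\bigstar_\Gamma G_v) \cong \bigstar_\Gamma C^*(G_v)$, and Theorem \ref{pcts}, exactly as you lay out. The obstacle you flag --- that \textbf{matrix} is not closed under direct sums, so Theorem \ref{pcts} cannot be quoted verbatim with $\ms{C} = \textbf{matrix}$ --- is genuine and is passed over in silence by the paper; your resolution (in the proof of Theorem \ref{pcts} the vertex algebras are only ever asked to lift into the $M_k$'s or their unital corners $P_{d,k}M_kP_{d,k}$, and a unital corner of a matrix algebra is again a matrix algebra, while the auxiliary algebra $C^*(\chi_{E_1},\dots,\chi_{E_m}) \cong C(\{1,\dots,m\})$ keeps $\bigstar_{\Gamma_2}\mc{B}_v$ within the scope of the inductive hypothesis) is the right way to close it.
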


\section{A selective version of Lin's Theorem}\label{sellin}

Lin's Theorem (\cite{lin2}) resolved a long-standing problem posed by Halmos in \cite{halmos} and can be stated as follows.

\begin{thm}[\cite{lin2}]
For any $\varepsilon > 0$ there is a $\delta > 0$ such that for any $n \in \mb{N}$ and any two contractive self-adjoint elements $a,b \in \mb{M}_n$ with $||[a,b]|| < \delta$ there exist commuting self adjoints $a',b' \in \mb{M}_n$ with $||a - b'|| + ||a - b'|| < \varepsilon$.  Here, $\mb{M}_n$ denotes the $n\times n$ matrices with complex entries, and $[x,y] := xy-yx$.
\end{thm}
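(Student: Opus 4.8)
The plan is to recast the statement in the language of norm ultraproducts of matrix algebras and to reduce it to a lifting (semiprojectivity-type) property of the commutative $C^*$-algebra $C([-1,1]^2)$. First I would argue by contradiction: if the theorem fails, there are $\varepsilon > 0$, integers $k_n$, and self-adjoint contractions $a_n, b_n \in \mb{M}_{k_n}$ with $\|[a_n,b_n]\| \to 0$ but $\inf\{\|a_n - a'\| + \|b_n - b'\| : [a',b']=0\} \ge \varepsilon$ for every $n$. Passing to the $C^*$-ultraproduct $\mc{M} := \prod_{\mc{U}} \mb{M}_{k_n}$ (with respect to the operator norm, not a trace), the elements $a := (a_n)_\mc{U}$ and $b := (b_n)_\mc{U}$ are commuting self-adjoint contractions, so continuous functional calculus yields a unital $*$-homomorphism $\pi \colon C([-1,1]^2) \to \mc{M}$ sending the two coordinate functions to $a$ and $b$.

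The theorem is then equivalent to the assertion that $\pi$ admits a norm-approximate lifting to honest unital $*$-homomorphisms $\pi_n \colon C([-1,1]^2) \to \mb{M}_{k_n}$: the images of the coordinate functions under such a lift would be genuinely commuting self-adjoint contractions $a_n', b_n'$ with $\|a_n - a_n'\|$ and $\|b_n - b_n'\|$ small for $\mc{U}$-many $n$, contradicting the choice of $a_n, b_n$. Thus everything reduces to a matrix-stability statement for $C([-1,1]^2)$ in the operator norm. The easy inputs are the real rank zero of $\mb{M}_n$ (which settles the one-variable problem, i.e. approximating a single self-adjoint by one with finite spectrum) together with the fact that almost-commuting elements can be spectrally cut along a partition of $[-1,1]^2$ into small squares.

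The hard part --- and the entire mathematical content of Lin's theorem --- is that such liftings can genuinely fail: the analogous statement for two almost-commuting \emph{unitaries} is false, as Voiculescu's examples show, the obstruction being a $K$-theoretic (Bott/winding-number) index carried by $K^1(\mb{T}^2) \neq 0$. So the crux is to show that the corresponding index obstruction vanishes in the self-adjoint case and then to convert this vanishing into an actual lift. Here the decisive feature is that the joint spectrum lives in the \emph{contractible} square $[-1,1]^2$, so $K^1([-1,1]^2) = 0$ and there is no room for a nontrivial index; equivalently, writing $x = a + ib$ as an almost-normal element whose spectrum sits in a contractible region, one hopes to perturb $x$ to an honestly normal element without meeting a topological obstruction. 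Making this precise is exactly where the real work lies: either via Lin's original route through the classification and tracial-rank theory of real rank zero $C^*$-algebras, or via the Friis--R\o rdam simplification, which exploits the contractibility of the spectrum directly through functional-calculus perturbations of $x$. I therefore expect the index-vanishing-to-lifting step to be the main obstacle, with the ultraproduct reformulation and the real-rank-zero spectral approximations being comparatively routine.
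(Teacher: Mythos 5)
The paper does not prove this statement at all: it is Lin's Theorem, quoted verbatim from \cite{lin2} as background for Section \ref{sellin}, so there is no in-paper argument to compare yours against. Judged on its own terms, your proposal has a genuine gap. The ultraproduct reformulation is correct and standard: the theorem is indeed equivalent to the assertion that every unital $*$-homomorphism $C([-1,1]^2) \rightarrow \prod_{\mc{U}}\mb{M}_{k_n}$ (norm ultraproduct) lifts approximately to the $\mb{M}_{k_n}$, i.e.\ to a matricial weak semiprojectivity statement for $C([-1,1]^2)$ in the operator norm. But an equivalence is not a proof of either side. Everything you label ``the hard part'' --- showing that the vanishing of the $K^1$ obstruction can be converted into an actual normal perturbation of the almost-normal element $x = a+ib$ --- is the entire content of Lin's Theorem, and your proposal explicitly stops at the point of saying that this is ``exactly where the real work lies.'' Identifying the obstacle and gesturing at Lin's original route or the Friis--R\o rdam simplification does not discharge it.

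You also understate the difficulty of what you call the ``easy inputs.'' Real rank zero of $\mb{M}_n$ settles nothing here (a self-adjoint matrix already has finite spectrum), and the claim that ``almost-commuting elements can be spectrally cut along a partition of $[-1,1]^2$ into small squares'' is precisely where the problem bites: the approximate joint spectral projections one builds over the small squares only almost commute, and patching them into an honest commuting family across the partition is where the index obstruction appears and where all known proofs expend their effort. Note also that the tracial $\|\cdot\|_{2,\tau}$ version of this patching is what the present paper's Theorem \ref{mr} handles via tracial stability; the operator-norm version you need is strictly harder and is not obtainable by those methods. As written, your argument is a correct strategic outline with the central step missing, not a proof.
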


\noin In English, this says \tql a pair of almost commuting self-adjoint matrices is near a pair of commuting self-adjoint matrices.\tqr This is a widely celebrated result in the operator theory community and frequently has been the subject of extension and variation.  Friis-R\o rdam in \cite{friror} extended Lin's Theorem to more operator algebras, including factor von Neumann algebras.  Further quantitative refinements can be found in \cite{filsaf, kacsaf}. A number of counterexamples are also present throughout the literature; in particular, it has been shown that the above statement does not apply to unitaries (\cite{voiccomm}), or to more than two self-adjoints (\cite{voicheis, davicomm, choicomm, loringcomm1, loringcomm2}). 

Halmos's problem is often posed in different contexts.  When we consider the Hilbert-Schmidt norm given by a trace, we have a positive result for an $n$-tuple of normal, self-adjoint, or unitary elements.  To put a finer point on it, Hadwin gave the following Hilbert-Schmidt variation of Lin's Theorem in \cite{hadwin}.

\begin{thm}[\cite{hadwin}]\label{hadwin}
For every $n \in \mb{N}$ and each $\varepsilon > 0$, there is a $\delta>0$ such that for any finite factor von Neumann algebra $M$ with tracial state $\tau$, if $a_1,\dots, a_n$ are contractions in $M$ such that \[||[a_i,a_i^*]||_{2,\tau}, ||[a_i,a_j]||_{2,\tau}< \delta\] for $1 \leq i,j \leq n$, then there is a commuting family $\left\{b_1,\dots, b_n\right\}$ of normal elements in $M$ such that \[\sum_{i = 1}^n ||a_i - b_i||_{2,\tau} \leq \varepsilon.\] If $||a_i-a_i^*||_{2,\tau} < \delta$ for $1 \leq i \leq n$, then the $a_i$'s can be taken to be self-adjoint; and if $||1 - a_ia_i^*||_{2,\tau} < \delta$, then the $b_i$'s can be taken to be unitaries.  Here, $||x||_{2,\tau} = \sqrt{\tau(x^*x)}$.
\end{thm}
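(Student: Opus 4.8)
The plan is to deduce this uniform, dimension-free statement from the tracial stability of commutative $C^*$-algebras (Theorem \ref{commts}) via a standard ultraproduct/contradiction argument. The organizing observation is that the universal unital $C^*$-algebra generated by $n$ commuting normal contractions is $C(\overline{\mb{D}}^n)$, where $\overline{\mb{D}} \subset \mb{C}$ is the closed unit disk and the generators $z_1,\dots,z_n$ are the coordinate functions; here one uses the Fuglede--Putnam theorem to see that a commuting family of normals generates a commutative algebra, so that no noncommutative universal object arises. Since every finite factor von Neumann algebra is real rank zero, the class $\mbf{ff}$ of finite factors is a class of \rrz $C^*$-algebras, and Theorem \ref{commts} guarantees that the separable algebra $C(\overline{\mb{D}}^n)$ is $\mbf{ff}$-tracially stable. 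The theorem is then essentially a reformulation of this tracial stability, with the ultraproduct supplying the uniformity over $M$ (and over dimension) for free.

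Concretely, I would fix $n$ and $\varepsilon > 0$ and argue by contradiction. If no $\delta$ as in the statement existed, then for each $k \in \mb{N}$ there would be a finite factor $(M_k,\tau_k)$ and contractions $a_1^{(k)},\dots,a_n^{(k)} \in M_k$ with $\|[a_i^{(k)},a_i^{(k)*}]\|_{2,\tau_k}, \|[a_i^{(k)},a_j^{(k)}]\|_{2,\tau_k} < 1/k$ for all $i,j$, yet with no commuting normal family $\{b_1,\dots,b_n\} \subset M_k$ satisfying $\sum_i \|a_i^{(k)} - b_i\|_{2,\tau_k} \leq \varepsilon$. Forming the tracial ultraproduct $(M_k,\tau_k)^\mc{U}$ and setting $a_i := (a_i^{(k)})_\mc{U}$, the commutator hypotheses force $[a_i,a_i^*] = 0$ and $[a_i,a_j] = 0$ in the ultraproduct, so $a_1,\dots,a_n$ is a genuinely commuting family of normal contractions. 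By joint continuous functional calculus this determines a unital $*$-homomorphism $\pi: C(\overline{\mb{D}}^n) \rightarrow (M_k,\tau_k)^\mc{U}$ with $\pi(z_i) = a_i$.

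By $\mbf{ff}$-tracial stability, $\pi$ is approximately liftable: there is a set $E \in \mc{U}$ and unital $*$-homomorphisms $\pi_k: C(\overline{\mb{D}}^n) \rightarrow M_k$ for $k \in E$ with $(\pi_k(f))_\mc{U} = \pi(f)$ for all $f$. Put $b_i^{(k)} := \pi_k(z_i)$. Since $\pi_k$ is a unital $*$-homomorphism out of a commutative algebra, $b_1^{(k)},\dots,b_n^{(k)}$ is a commuting family of normal contractions, and the identity $(b_i^{(k)})_\mc{U} = a_i = (a_i^{(k)})_\mc{U}$ gives $\lim_\mc{U} \sum_{i=1}^n \|a_i^{(k)} - b_i^{(k)}\|_{2,\tau_k} = 0$. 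Hence the set of $k$ with $\sum_i \|a_i^{(k)} - b_i^{(k)}\|_{2,\tau_k} \leq \varepsilon$ lies in $\mc{U}$ and is in particular nonempty, contradicting the choice of the sequence. The self-adjoint and unitary refinements follow by rerunning the argument with $C(\overline{\mb{D}}^n)$ replaced by $C([-1,1]^n)$ and $C(\mb{T}^n)$ respectively, using the extra hypotheses $\|a_i - a_i^*\|_{2,\tau} < \delta$ and $\|1 - a_i a_i^*\|_{2,\tau} < \delta$ to force the ultraproduct generators into the self-adjoint, respectively unitary, locus; each replacement algebra is again commutative, hence tracially stable by Theorem \ref{commts}.

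The main obstacle I anticipate is verificational rather than structural. One must confirm that finite factors are genuinely real rank zero so that Theorem \ref{commts} applies with $\ms{C} = \mbf{ff}$, and, more delicately, that the $\|\cdot\|_{2,\tau_k}$ commutator bounds pass to \emph{exact} relations in the ultraproduct, so that $\pi$ is a bona fide $*$-homomorphism on all of $C(\overline{\mb{D}}^n)$ and not merely an approximate one on the generators (this is exactly where Fuglede--Putnam and the normality of commuting limits are needed). Controlling the full sum $\sum_i \|a_i - b_i\|_{2,\tau}$ rather than a single term is harmless, since approximate liftability yields $\lim_\mc{U}\|a_i^{(k)} - b_i^{(k)}\|_{2,\tau_k} = 0$ for each of the finitely many $i$; alternatively one invokes Lemma \ref{techlemma} with $N = n$ and tolerance $\varepsilon/n$ to obtain the bound directly and uniformly over the sequence.
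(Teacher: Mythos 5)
Your argument is correct, and it is essentially the approach the paper has in mind: Theorem \ref{hadwin} is quoted from \cite{hadwin} without proof, but the paper explicitly describes Hadwin's proof as an ultraproduct argument that anticipates tracial stability, and the paper's own proof of the generalization Theorem \ref{mr} (which, per the remark following it, recovers Theorem \ref{hadwin} when $\Gamma$ is complete) is exactly your contradiction/ultraproduct/approximate-lifting scheme. The only cosmetic difference is that you lift from the universal commutative model $C(\overline{\mb{D}}^n)$ via Theorem \ref{commts}, whereas the paper lifts from the graph (here tensor) product of the singly generated commutative algebras $C^*(1,x_v)$ via Theorem \ref{pcts}; these coincide in substance, including the implicit use of Fuglede--Putnam to get commutativity of the generated algebra.
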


\noin See \cite{glebsky, filkac} for more on the Hilbert-Schmidt version. Theorem \ref{hadwin} is proved in \cite{hadwin} using an ultraproduct argument. Upon revisiting Hadwin's proof of Theorem \ref{hadwin} in \cite{hadwin} one can see how it utilizes the  notion of tracial stability two decades before its definition. 
It should also noted that Arzhantseva-P\u{a}unescu recently proved an analogous result for symmetric groups in \cite{arzpau}: with respect to the Hamming metric, almost commuting permutations are near commuting permutations.  More recently, Elek-Grabowski proved an analogous result for unitary or self-adjoint matrices with respect to the rank metric in \cite{elegra}.

The main result of this section is a selective version of Lin's Theorem.  As in \cite{hadwin, glebsky, filkac} we consider the Hilbert-Schmidt norm from a tracial state; so a more accurate description would be a selective version of Hadwin's tracial variation of Lin's Theorem.  We show that the statement of Theorem \ref{hadwin} holds when the almost-commuting hypothesis is assumed only for a selected sub-collection of pairs of the $a_i$'s.  Using graph products, the selectivity of the almost commuting relations can be encoded using a graph with the vertices representing the $n$ elements and the edges connecting the selected pairs of elements which almost commute. In particular, we apply Theorem \ref{pcts} to show that for graphs in the \emph{pincushion class}, a version of Theorem \ref{hadwin} holds for elements that almost commute according to those graphs.  

\begin{thm}\label{mr}
Let $\Gamma \in \ms{G}^{(\infty)}$ be a pincushion graph, and let $\varepsilon >0$ be given.  Then there exists $\delta>0$ such that for any $C^*$-algebra $M \in \ms{C}$ with tracial state $\tau$, if for every $v \in V\Gamma, a_v$ is a contraction in $M$ such that $\left|\left|[a_v,a_v^*]\right|\right|_{2,\tau}< \delta$ for every $v \in V\Gamma$ and \[\left|\left|[a_v,a_w]\right|\right|_{2,\tau} < \delta\] whenever $(v,w) \in E\Gamma$, then there is a family $\left\{b_v\right\}_{v \in V\Gamma}$ of normal elements in $M$ commuting according to $\Gamma$ such that \[\sum_{v \in V\Gamma} ||a_v - b_v||_{2,\tau} \leq \varepsilon.\] If $||a_v-a_v^*||_{2,\tau} < \delta$ for $v \in V\Gamma$, then the $b_v$'s can be taken to be self-adjoint; and if $||1 - a_va_v^*||_{2,\tau} < \delta$, then the $b_v$'s can be taken to be unitaries.
\end{thm}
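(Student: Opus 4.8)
The plan is to deduce Theorem \ref{mr} from Theorem \ref{pcts} by a standard ultraproduct compactness argument, exactly the mechanism by which finitary $\varepsilon$--$\delta$ statements are repackaged as tracial-stability statements. Suppose the conclusion fails. Then there is some $\varepsilon_0 > 0$ such that for every $k \in \mb{N}$ (taking $\delta = 1/k$) there is an algebra $M_k \in \ms{C}$ with tracial state $\tau_k$ and a family of contractions $\left\{a_v(k)\right\}_{v \in V\Gamma}$ in $M_k$ satisfying $\|[a_v(k),a_v(k)^*]\|_{2,\tau_k} < 1/k$ for each $v$ and $\|[a_v(k),a_w(k)]\|_{2,\tau_k} < 1/k$ for each $(v,w) \in E\Gamma$, yet for which no family $\left\{b_v\right\}_{v \in V\Gamma}$ of normal elements commuting according to $\Gamma$ achieves $\sum_{v \in V\Gamma} \|a_v(k) - b_v\|_{2,\tau_k} \leq \varepsilon_0$.

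The key step is to pass to the tracial ultraproduct $(M_k,\tau_k)^\mc{U}$ and set $A_v := (a_v(k))_\mc{U}$. Because the defect norms tend to $0$ along $\mc{U}$, each $A_v$ is normal and the family $\left\{A_v\right\}$ commutes according to $\Gamma$. Hence, letting $\mc{A}_v := C(\sigma(A_v))$ be the commutative $C^*$-algebra generated by $A_v$ (so each $\mc{A}_v$ is separable, unital, commutative, and in particular satisfies all three hypotheses of Theorem \ref{pcts} regardless of the role $v$ plays in $\Gamma$), the universal property of the graph product furnishes a unital $*$-homomorphism $\pi : \bigstar_\Gamma \mc{A}_v \rightarrow (M_k,\tau_k)^\mc{U}$ sending the canonical generator of $\mc{A}_v$ to $A_v$. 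By Theorem \ref{pcts}, $\bigstar_\Gamma \mc{A}_v$ is $\ms{C}$-tracially stable, so $\pi$ is approximately liftable: there is a lifting $\left\{\pi_k\right\}$ with $A_v = (\pi_k(\text{id}_v))_\mc{U}$. Setting $b_v(k) := \pi_k(\text{id}_v)$, these are normal elements of $M_k$ commuting according to $\Gamma$ (since each $\pi_k$ is a graph-product homomorphism), and $\|a_v(k) - b_v(k)\|_{2,\tau_k} \to 0$ along $\mc{U}$ for each $v$. Thus for $\mc{U}$-many $k$ we have $\sum_{v \in V\Gamma} \|a_v(k) - b_v(k)\|_{2,\tau_k} \leq \varepsilon_0$, contradicting the choice of the $a_v(k)$. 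This yields the normal case; the self-adjoint and unitary cases follow by the same argument, replacing $\mc{A}_v$ with $C([-1,1])$ or $C(\mb{T})$ respectively and invoking the hypotheses $\|a_v(k) - a_v(k)^*\|_{2,\tau_k} < 1/k$ or $\|1 - a_v(k)a_v(k)^*\|_{2,\tau_k} < 1/k$ to ensure the ultraproduct limits $A_v$ are self-adjoint or unitary.

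The main obstacle is the verification that $\pi$ is genuinely well-defined, i.e.\ that the generators $A_v$ really commute according to $\Gamma$ in the ultraproduct so that the universal property applies. This requires that $\|[A_v,A_w]\|_{2,\mc{U}} = 0$ for each edge $(v,w)$, which is immediate from $\|[a_v(k),a_w(k)]\|_{2,\tau_k} < 1/k \to 0$, together with the fact that $\|\cdot\|_{2,\mc{U}}$ is a genuine norm on the tracial ultraproduct (so $2$-norm-zero elements are $0$); normality of each $A_v$ follows identically from the self-commutator defect. One must also confirm that each $A_v$ is a contraction so that $\sigma(A_v)$ is a compact subset of the disk and $\mc{A}_v = C(\sigma(A_v))$ is unital and separable — this is immediate since $\|A_v\| \leq 1$ in the ultraproduct. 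A final bookkeeping point is that since the $\mc{A}_v$ are all commutative here, the hypotheses of Theorem \ref{pcts} are trivially met at every vertex, so no case analysis on the graph structure of $\Gamma$ is needed at this stage; all the graph-theoretic subtlety has already been absorbed into the proof of Theorem \ref{pcts} itself.
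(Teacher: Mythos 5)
Your proposal is correct and follows essentially the same route as the paper: negate the statement, pass to the tracial ultraproduct where the generators become normal and commute according to $\Gamma$, apply Theorem \ref{pcts} to the graph product of the (commutative) singly generated algebras to lift, and derive a contradiction. The only cosmetic difference is that you write the vertex algebras as $C(\sigma(A_v))$ where the paper uses $C^*(1,x_v)$ (the same algebra via functional calculus), and you spell out the self-adjoint and unitary variants that the paper leaves implicit.
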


\begin{proof}
As in the proof of Theorem \ref{hadwin}, we will apply tracial stability. Let $\Gamma \in \ms{G}^{(\infty)}$ and $\varepsilon > 0$ be given.  Suppose for the sake of contradiction that for every $k \in \mb{N}$, there is a $C^*$-algebra $M_k \in \ms{C}$ with tracial state $\tau_k$ and elements $\left\{a_{v,k}\right\}_{v\in V}$ such that $\ds ||[a_{v,k},a_{v,k}^*]||_{2,\tau_k} < \frac{1}{k}$ for every $v \in V$, and $\ds ||[a_{v,k},a_{w,k}]||_{2,\tau_k}< \frac{1}{k}$ whenever $(v,w) \in E\Gamma$ with the property that for any family of normal elements $\left\{b_{v,k}\right\}_{v\in V\Gamma} \subset M_k$ commuting according to $\Gamma$, we have 
\begin{align}
\sum_{v \in V\Gamma} ||a_{v,k} - b_{v,k}||_{2,\tau_k} > \varepsilon. \label{gg}
\end{align} Now consider the ultraproduct $(M_k,\tau_k)^\mc{U}$.  For $v \in V\Gamma$, let $x_v = (a_{v,k})_\mc{U}$.  Then the elements $\left\{x_v\right\}_{v \in V\Gamma}$ are normal in $(M_k,\tau_k)^\mc{U}$ and commute according to $\Gamma$.  For each $v \in V$ consider the identity $*$-homomorphism denoted $\pi_v: C^*(1, x_v) \rightarrow (M_k,\tau_k)^\mc{U}$.  Since the images of the $\pi_v$'s commute according to $\Gamma$, we can form the $*$-homomorphism $\pi = \bigstar_\Gamma \pi_v: \bigstar_\Gamma C^*(1, x_v) \rightarrow (M_k,\tau_k)^\mc{U}$.  Since $C^*(1, x_v)$ is commutative for every $v \in V$ and $\Gamma$ is in the pincushion class, $\pi$ is approximately liftable by Theorem \ref{pcts}.  Hence, for each $k \in \mb{N}$, there is a $*$-homomorphism $\pi_k: \bigstar_\Gamma C^*(1, x_v) \rightarrow M_k$ such that $(\pi_k(a))_\mc{U} = \pi(a)$ for every $a \in \bigstar_\Gamma C^*(1, x_v)$.  For each $v \in V$ and each $k \in \mb{N}$, let $b_{v,k} = \pi_k(x_v)$. Then for each $k \in \mb{N}$, $\left\{b_{v,k}\right\}_{v \in V}$ is a collection of normal elements in $M_k$ commuting according to $\Gamma$ with $(a_{v,k})_\mc{U} = (b_{v,k})_\mc{U}$ for every $v \in V$.  This contradicts \eqref{gg}.
\end{proof}

\begin{rmk}
Note that if $\Gamma$ is taken to be a complete graph, then the above proof recovers the spirit of the proof of Theorem \ref{hadwin} in \cite{hadwin}.  This shows that Hadwin's usage of tracial stability predates its inception in \cite{hadshu}.
\end{rmk}

\section{Approximation properties of RAAGs}\label{app}

This section lays out the narrative of how Theorem \ref{pcts} impacts certain approximation properties of right-angled Artin groups (see Definition \ref{raagdfn}). 

To begin, we show that if $A$ is a right-angled Artin group (RAAG), its full group $C^*$-algebra $C^*(A)$ is quasidiagonal (QD) by extending Brown-Ozawa's proof of the fact that $C^*(\mb{F}_n\times \mb{F}_n)$ is QD (Proposition 7.4.5 in \cite{brownozawa}). Quasidiagonality is an approximation property for $C^*$-algebras that has been the subject of intense study for several decades now.  Group $C^*$-algebras are natural examples for which one can consider quasidiagonality.  In the context of the reduced $C^*$-algebra of a countable discrete group $G$, the score is settled: $C^*_r(G)$ is QD if and only if $G$ is amenable.  The \tql only if\tqr direction has been know for over thirty years now thanks to Rosenberg in \cite{hadros}, whereas the \tql if\tqr direction (Rosenberg's conjecture) was only recently resolved by Tikuisis-White-Winter in \cite{tww}.  For full/universal $C^*$-algebras of (non-amenable) groups the question of quasidiagonality is not as well-understood, but there are some surprising results nonetheless.  For example, due to Choi's residual finite dimensional result in \cite{choi}, $C^*(\mb{F}_n)$ is QD.  We already mentioned Brown-Ozawa's result showing that $C^*(\mb{F}_n\times \mb{F}_n)$ is QD.  Since these groups are examples of RAAGs, Theorem \ref{raagsqd} generalizes these two examples.  In the wake of the resolution of Rosenberg's conjecture in \cite{tww}, it is worth recording that if we combine the fact that amenable groups have QD $C^*$-algebras with the fact that the universal free product of QD $C^*$-algebras is QD (Proposition 13 of \cite{boca2}), then we obtain the following theorem.

\begin{thm}\label{fpgpqd}
Let $\left\{G_i\right\}_{i \in I}$ be a collection of countable discrete amenable groups.  Then $C^*(*_{i \in I} G_i)\cong *_{i \in I}C^*(G_i)$ is QD.
\end{thm}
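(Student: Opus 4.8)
The plan is to reduce the claim to two ingredients that are already available in the excerpt's narrative: first, that the full $C^*$-algebra of a countable discrete amenable group is quasidiagonal, and second, that quasidiagonality is preserved under the universal free product. The isomorphism $C^*(*_{i \in I} G_i) \cong *_{i \in I} C^*(G_i)$ is the standard functorial compatibility of the full group $C^*$-algebra with free products, which I would invoke by matching universal properties: a unitary representation of the free product $*_{i \in I} G_i$ is the same datum as a family of unitary representations of the $G_i$, which in turn corresponds to a family of $*$-representations of the $C^*(G_i)$, which by the universal property of the $C^*$-algebraic free product is a $*$-representation of $*_{i \in I} C^*(G_i)$. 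So both sides have the same universal property and are canonically isomorphic.

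Next I would establish that each $C^*(G_i)$ is quasidiagonal. Since each $G_i$ is amenable, $C^*(G_i) = C^*_r(G_i)$ (the full and reduced $C^*$-algebras coincide for amenable groups), and by the Tikuisis-White-Winter resolution of Rosenberg's conjecture (cited as \cite{tww}), $C^*_r(G_i)$ is quasidiagonal precisely because $G_i$ is amenable. This is exactly the input the excerpt flags as the resolution of Rosenberg's conjecture.

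Finally I would feed these quasidiagonal building blocks into the free-product stability result. The excerpt explicitly points to Proposition 13 of \cite{boca2}, which states that the universal free product of quasidiagonal $C^*$-algebras is quasidiagonal. Applying this to the family $\{C^*(G_i)\}_{i \in I}$ gives that $*_{i \in I} C^*(G_i)$ is quasidiagonal, and transporting this across the isomorphism from the first step yields that $C^*(*_{i \in I} G_i)$ is quasidiagonal, as desired.

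The main obstacle here is essentially bookkeeping rather than any genuine difficulty: one must be careful that \cite{boca2} and the statement are both phrased for the \emph{universal} (full) free product, since quasidiagonality is notoriously sensitive to the choice of norm and the corresponding statement can fail for reduced free products. Provided the index set $I$ is countable so that the free product is separable (or provided the cited results are stated in the appropriate generality), the argument is a direct concatenation of the two cited facts with the universal-property identification, and no further estimates are needed.
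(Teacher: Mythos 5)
Your proposal is correct and matches the paper's argument exactly: the paper derives Theorem \ref{fpgpqd} in the preceding prose by combining the fact that amenable groups have QD full group $C^*$-algebras (via Tikuisis--White--Winter and the coincidence of full and reduced algebras for amenable groups) with Boca's result that universal free products of QD $C^*$-algebras are QD. Your additional remarks on the universal property identification and the full-versus-reduced distinction are sound elaborations of the same route.
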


\noin Theorems \ref{fpgpqd} and \ref{raagsqd} add to the list of groups with QD universal $C^*$-algebras.

RAAGs were first introduced by Baudisch in \cite{baudisch}.  Since their introduction, such groups have been heavily studied in the group theory literature--see \cite{charney} for a survey and repository of references.  In addition to the attention earned by their subgroups, the relation of RAAGs with special cube complexes in Agol's resolution of the virtual Haken conjecture has further enriched their cachet--see \cite{agol, hagwis, wise}. 
\begin{dfn}\label{raagdfn}
A group $A$ with presentation \begin{align} A= \langle s_1,\dots, s_n | s_is_j = s_j s_i \text{ if } (i,j) \in E \rangle \label{raagdef}\end{align} for some symmetric subset $E\subset \left\{1,\dots, n\right\}^2$, is called a \emph{right-angled Artin group (or RAAG)}. 
\end{dfn}
\noin A RAAG can be perceived as a graph product of copies of $\mb{Z}$; that is, $A = \bigstar_\Gamma \mb{Z}$ where $V\Gamma = \left\{1,\dots, n\right\}$ and $E\Gamma = E$ for $E$ as in \eqref{raagdef}.  For instance, $\mb{F}_2 \times \mb{F}_2 \cong \bigstar_\Gamma \mb{Z}$ where $\Gamma$ is the following graph.

\begin{center}
\begin{tikzpicture}
\path (0,0) node [draw,shape=circle,] (p0) {$\mb{Z}$}
(0,2) node [draw,shape=circle,fill=gray]  (p1) {$\mb{Z}$}
(2,2) node [draw,shape=circle,] (p2) {$\mb{Z}$}
(2,0) node [draw,shape=circle,fill=gray] (p3) {$\mb{Z}$};
\draw (p0) -- (p1)
(p1) -- (p2)
(p2) -- (p3)
(p3) -- (p0);
\end{tikzpicture}
\end{center}
Here, the unshaded vertices generate one copy of $\mb{F}_2$ and the shaded vertices generate the second copy commuting with the first.



The following proposition shows that graph products are durable under the $C^*(\bullet)$ functor from groups to $C^*$-algebras.

\begin{prop}[\cite{casfim}]
$C^*(\bigstar_\Gamma G_v) \cong \bigstar_\Gamma C^*(G_v)$.
\end{prop}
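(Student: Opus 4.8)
The plan is to establish the isomorphism $C^*(\bigstar_\Gamma G_v) \cong \bigstar_\Gamma C^*(G_v)$ by matching universal properties. Both sides are defined by universal properties—the graph product group by its defining relations (equivalently, its own universal property), the functor $C^*(\bullet)$ by the universal property of full group $C^*$-algebras, and the universal graph product $C^*$-algebra by Definition \ref{unidef}. The natural strategy is to show that $C^*(\bigstar_\Gamma G_v)$ satisfies the universal property characterizing $\bigstar_\Gamma C^*(G_v)$, so that the two coincide by the uniqueness clause in Definition \ref{unidef}. First I would fix the canonical unital $*$-homomorphisms $\iota_v : C^*(G_v) \to C^*(\bigstar_\Gamma G_v)$ induced by functoriality from the inclusions $G_v \hookrightarrow \bigstar_\Gamma G_v$. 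Since the images of $G_v$ and $G_w$ commute in $\bigstar_\Gamma G_v$ whenever $(v,w) \in E\Gamma$, the induced maps on the group algebras also have commuting images on generators, hence on all of $C^*(G_v)$ and $C^*(G_w)$; this verifies condition (1) of Definition \ref{unidef}.

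Next I would verify the factorization property (2). Given any unital $C^*$-algebra $\mc{B}$ with $*$-homomorphisms $f_v : C^*(G_v) \to \mc{B}$ whose images commute according to $\Gamma$, I would restrict each $f_v$ to the unitary group $G_v \subset C^*(G_v)$ to obtain group homomorphisms $\tilde f_v : G_v \to \mc{U}(\mc{B})$. Because the $f_v$ have images commuting according to $\Gamma$, the unitaries $\tilde f_v(g)$ and $\tilde f_w(h)$ commute whenever $(v,w) \in E\Gamma$. By the universal property of the graph product \emph{group}, these assemble into a single homomorphism $\bigstar_\Gamma \tilde f_v : \bigstar_\Gamma G_v \to \mc{U}(\mc{B})$, which in turn induces by the universal property of $C^*(\bigstar_\Gamma G_v)$ a unital $*$-homomorphism $\Phi : C^*(\bigstar_\Gamma G_v) \to \mc{B}$. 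A direct check on generators shows $\Phi \circ \iota_{v_0} = f_{v_0}$ for each $v_0$, and uniqueness of $\Phi$ follows because the images of the $\iota_v$ generate $C^*(\bigstar_\Gamma G_v)$ as a $C^*$-algebra (the group $\bigstar_\Gamma G_v$ is generated by the $G_v$). Thus $C^*(\bigstar_\Gamma G_v)$ satisfies the universal property of $\bigstar_\Gamma C^*(G_v)$, giving the desired isomorphism.

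The main obstacle I anticipate is the careful handling of the two layers of universal properties—group-level versus $C^*$-algebra-level—and in particular confirming that commutation of the \emph{images} of the $f_v$ (i.e.\ on all of $C^*(G_v)$) is both implied by and sufficient for commutation on the generating unitaries. This equivalence is routine, since a $*$-homomorphism out of a group $C^*$-algebra is determined by its restriction to the generating group and two $C^*$-subalgebras commute if and only if their generating sets commute, but it must be stated cleanly to avoid circularity. A secondary subtlety is ensuring that the canonical maps $\iota_v$ are injective (so the identification is genuine and not merely a quotient); this follows from the existence of a faithful representation or from the fact that each $G_v$ embeds into $\bigstar_\Gamma G_v$, but since the statement only asserts an isomorphism of the whole algebras, the universal-property argument suffices without separately invoking injectivity of the $\iota_v$. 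Given that this result is attributed to \cite{casfim}, I would keep the proof to a short paragraph and reference that source for any remaining details.
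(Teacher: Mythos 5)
Your universal-property argument is correct and complete; the paper itself gives no proof of this proposition, simply citing \cite{casfim}, and your two-layer matching of universal properties (group-level graph product, full group $C^*$-algebra, and Definition \ref{unidef}) is exactly the standard argument found there. The subtleties you flag—commutation on generating unitaries extending to the generated $C^*$-subalgebras, and the fact that injectivity of the $\iota_v$ is not needed for the isomorphism—are handled appropriately.
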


\noin Thus, given a RAAG $A$, we have that $C^*(A) = C^*(\bigstar_\Gamma \mb{Z}) \cong \bigstar_\Gamma C^*(\mb{Z})$.  We record the following well-known fact about these component $C^*$-algebras $C^*(\mb{Z}) \cong C(\mb{T})$.

\begin{prop}
$C^*(\mb{Z})$ enjoys the following universal property.  Given a Hilbert space $\mc{H}$ and a unitary $u \in B(\mc{H})$, there exists a unital $*$-homomorphism $k: C^*(\mb{Z}) \rightarrow C^*(u)$ such that $k(s) = u$ where $s$ is the generator of $\mb{Z}$ and $C^*(u)$ denotes the $C^*$-algebra generated by $u$.
\end{prop}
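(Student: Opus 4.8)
The plan is to reduce the statement to the continuous functional calculus for the unitary $u$, using the identification $C^*(\mb{Z}) \cong C(\mb{T})$ recorded above. Under this isomorphism the generator $s$ corresponds to the coordinate function $\iota \colon z \mapsto z$ on the circle $\mb{T}$, so the task becomes producing a unital $*$-homomorphism $C(\mb{T}) \rightarrow C^*(u)$ that carries $\iota$ to $u$.

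First I would observe that since $u$ is a unitary, its spectrum $\sigma(u)$ is a closed subset of $\mb{T}$. The continuous functional calculus for the normal element $u$ then furnishes an isometric unital $*$-isomorphism $\Phi \colon C(\sigma(u)) \rightarrow C^*(u)$ given by $f \mapsto f(u)$; surjectivity onto $C^*(u)$ follows from Stone--Weierstrass, since polynomials in $z, \bar z$ are dense in $C(\sigma(u))$ and map to polynomials in $u, u^*$, which are dense in $C^*(u)$. Next I would compose $\Phi$ with the restriction $*$-homomorphism $r \colon C(\mb{T}) \rightarrow C(\sigma(u))$, which is surjective by the Tietze extension theorem. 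Setting $k := \Phi \circ r$ gives a unital $*$-homomorphism $C(\mb{T}) \rightarrow C^*(u)$, and to finish I would check that $k(s) = u$: the coordinate function $\iota$ on $\mb{T}$ restricts to the coordinate function on $\sigma(u)$, and $\Phi$ applied to the coordinate function is by definition $u$. Alternatively---and perhaps more in keeping with the group-theoretic setting---I could invoke the universal property of the full group $C^*$-algebra directly, noting that $n \mapsto u^n$ is a unitary representation of $\mb{Z}$ on $\mc{H}$ which extends uniquely to a unital $*$-homomorphism $C^*(\mb{Z}) \rightarrow B(\mc{H})$ sending $s$ to $u$, with image the unital $C^*$-algebra $C^*(u)$.

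There is no serious obstacle here, as this is a standard consequence of functional calculus; the only point requiring a moment's care is verifying that the image of $k$ is exactly $C^*(u)$ rather than a proper subalgebra, which is handled by the surjectivity of both $\Phi$ and $r$ (equivalently, by the fact that $C^*(u)$ is generated by $u$).
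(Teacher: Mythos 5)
Your proof is correct. The paper states this proposition as a well-known fact and gives no proof at all, so there is nothing to compare against; either of your two routes (functional calculus on $C(\mb{T}) \cong C^*(\mb{Z})$ composed with restriction to $\sigma(u)$, or the universal property of the full group $C^*$-algebra applied to the representation $n \mapsto u^n$) is a complete and standard justification, and you correctly handle the one point needing care, namely that the image is all of $C^*(u)$.
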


\begin{dfn} A $C^*$-algebra $\mc{A}$ is \emph{quasidiagonal (or QD)} if for every finite subset $F \subset \mc{A}$ and every $\varepsilon >0$ there exist $n \in \mb{N}$ and a contractive completely positive map $\varphi: \mc{A} \rightarrow \mb{M}_n$ such that \[||\varphi(ab) - \varphi(a)\varphi(b)|| < \varepsilon\] and \[||\varphi(a)|| > ||a||-\varepsilon\] for every $a,b \in F$. We will say that a group $G$ is QD if $C^*(G)$ is QD. So, for instance, Theorem \ref{fpgpqd} says that the free product of amenable groups is QD.
\end{dfn}

The last ingredient we need before we give the next main result is a version of Voiculescu's homotopy invariance property for quasidiagonality from \cite{voicqd}.  

\begin{dfn}
Let $\mc{A}$ and $\mc{B}$ be two $C^*$-algebras. Two $*$-homomorphisms $\varphi,\psi: \mc{A} \rightarrow \mc{B}$ are \emph{homotopic} if there are $*$-homomorphisms $\sigma_t: \mc{A} \rightarrow \mc{B}$ for $t \in [0,1]$ such that $\sigma_0 = \varphi, \sigma_1 = \psi$, and for every $a \in \mc{A}$, $\sigma_t(a)$ is a norm-continuous path.  
\end{dfn}

\begin{prop}[\cite{brownozawa}]
Let $\varphi, \psi: \mc{A} \rightarrow \mc{B}$ be homotopic $*$-homomorphisms such that $\varphi$ is injective and $\psi(\mc{A})$ is QD.  Then $\mc{A}$ is QD.
\end{prop}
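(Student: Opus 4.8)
The plan is to produce a faithful representation of $\mc{A}$ whose image is a quasidiagonal set of operators; since $\varphi$ is injective we have $\mc{A}\cong\varphi(\mc{A})$ and $\|a\|=\|\varphi(a)\|$ for all $a$, and quasidiagonality is a $*$-isomorphism invariant, so this suffices. First I would assemble the homotopy into a single $*$-homomorphism $\Sigma:\mc{A}\to C([0,1],\mc{B})$ defined by $\Sigma(a)(t)=\sigma_t(a)$; this is well defined because $t\mapsto\sigma_t(a)$ is norm-continuous, and it is a $*$-homomorphism because each $\sigma_t$ is. Crucially $\Sigma$ is injective, since $\mathrm{ev}_0\circ\Sigma=\sigma_0=\varphi$ is injective, so $\mc{A}$ embeds in $C([0,1],\mc{B})$ with the endpoint evaluations recovering $\varphi$ (faithful) and $\psi$ (with quasidiagonal image). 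Fixing a faithful representation $\mc{B}\subseteq B(\mc{K})$, I would realize $\Sigma$ as a faithful representation of $\mc{A}$ by multiplication operators on $L^2([0,1],\mc{K})$. Note that, because every $\sigma_t$ is contractive while $\sigma_0=\varphi$ is isometric, $\|\Sigma(a)\|=\sup_{t}\|\sigma_t(a)\|=\|a\|$ with the supremum attained at the faithful end $t=0$.

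Next I would exploit quasidiagonality at the opposite end. Fixing a finite set $F\subseteq\mc{A}$ of contractions and a tolerance, I would use the norm-continuity of each $t\mapsto\sigma_t(a)$ and the compactness of $[0,1]$ to choose a mesh $0=t_0<\cdots<t_L=1$ with $\|\sigma_{t_l}(a)-\sigma_{t_{l-1}}(a)\|$ below the tolerance for every $a\in F$ and every $l$. Since $\psi(\mc{A})=\sigma_1(\mc{A})$ is quasidiagonal, the definition supplies, after pulling back along $\psi$, finite-rank projections that nearly commute with $\sigma_1(a)$ for $a\in F$, together with an approximately multiplicative, approximately isometric matrix model at the endpoint $t=1$. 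The core of the argument is then to transport this endpoint structure across the mesh: at each step from $t_l$ to $t_{l-1}$ the two $*$-homomorphisms are uniformly close on $F$, and I would correct the resulting block-diagonal approximation using Voiculescu's theorem (the noncommutative Weyl--von Neumann--Voiculescu absorption theorem, cf. \cite{brownozawa, voicqd}), whose applicability rests on $\varphi$ being faithful, hence available as a faithful essential representation that absorbs the degenerate summands. Iterating over a sequence of shrinking tolerances and refining meshes yields finite-rank projections $P_n\to 1$ strongly with $\|[P_n,\Sigma(a)]\|\to 0$ for all $a$; equivalently, the contractive completely positive approximations of the definition can be assembled so that $\|\Phi(a)\|>\|a\|-\varepsilon$, the norm being read off at the faithful end $t=0$.

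The main obstacle is that quasidiagonality is not preserved under extensions or quotients in general, so the quasidiagonality of the single endpoint algebra $\psi(\mc{A})$ cannot simply be pushed to the faithful endpoint $\varphi(\mc{A})\cong\mc{A}$ whose operator norm we must reproduce. The homotopy is exactly the bridge between the two ends, and the technical heart is the uniform propagation of nearly-commuting finite-rank projections along the path while forcing the commutator errors to zero. Naively reusing the endpoint projections only yields commutators bounded by a fixed step size rather than tending to $0$, and telescoping the norms across the mesh cannot recover $\|\sigma_0(a)\|$ from $\|\sigma_1(a)\|$; overcoming this is precisely where the norm-continuity of the homotopy and the absorption theorem (via the faithfulness of $\varphi$) are indispensable.
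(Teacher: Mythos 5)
This proposition is Voiculescu's homotopy invariance theorem for quasidiagonality; the paper does not prove it but cites it from \cite{brownozawa} (where it is Theorem 7.3.6 and its corollary), so there is no in-paper argument to compare against. Measured against the actual proof, your proposal is a correct framing of the problem together with a list of the right ingredients, but the decisive step is missing. You set up the path homomorphism $\Sigma$, observe that the norm of $a$ is read off at the faithful end $t=0$ while the almost-commuting finite-rank projections are manufactured at the quasidiagonal end $t=1$, discretize by uniform continuity, and then say you would ``correct the resulting block-diagonal approximation using Voiculescu's theorem.'' That last sentence is the entire theorem. As you yourself note, transporting a projection one mesh step costs $2\delta$ in the commutator estimate, so iterating over $n(\delta)$ steps yields an error of order $n(\delta)\cdot\delta$, which is controlled by the ``length'' of the path and does not tend to zero; and no amount of telescoping of norms recovers $\|\varphi(a)\|$ from $\|\psi(a)\|$ (take $\psi$ to be the trivial representation of $C^*(\mb{F}_2)$, homotopic to a faithful one). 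Naming the absorption theorem as the fix, without exhibiting how it is applied --- which quantity it makes independent of the representation, and why that kills the accumulation rather than merely relocating it --- leaves the technical heart of the proof unproved. In Brown--Ozawa the argument runs through the quantity $qd(\omega)=\liminf_{P\to 1}\max_{x\in\omega}\|[P,x]\|$ for infinite ampliations, its exact (not merely approximate) invariance under approximate unitary equivalence, and the absorption theorem's consequence that this quantity computed in any faithful essential representation of $\mc{A}$ is the same; none of that machinery appears in your sketch.

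Two smaller points. First, even the starting move at $t=1$ is glossed: the definition of quasidiagonality of $\psi(\mc{A})$ used in this paper is the abstract one (c.c.p.\ maps into matrices), and converting it into finite-rank projections that nearly commute with $\sigma_1(a)$ inside the \emph{given} representation on $L^2([0,1],\mc{K})$ already requires Voiculescu's theorem applied to a faithful \emph{essential} representation of $\psi(\mc{A})$ (e.g.\ after an infinite ampliation); the multiplication representation you construct is not obviously essential for $\psi(\mc{A})$ and in any case is never used again. Second, the reduction via $\Sigma:\mc{A}\to C([0,1],\mc{B})$ is harmless but does no work: the proof operates with the family $\{\sigma_t\}$ directly, and embedding $\mc{A}$ into the path algebra neither supplies the projections nor the norm estimate. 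The gap, concretely, is the mechanism by which faithfulness of $\varphi$ plus the absorption theorem converts ``almost commuting projections for $\psi$'' into ``almost commuting, almost norm-attaining projections for $\varphi$'' with errors that can be made arbitrarily small rather than merely bounded.
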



\begin{thm}\label{raagsqd}
Let $A$ be a RAAG.  Then $A$ is QD.
\end{thm}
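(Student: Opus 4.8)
The plan is to prove that every RAAG $A$ has a QD full group $C^*$-algebra by following the strategy the paper has already set up for the homotopy argument, mirroring Brown-Ozawa's treatment of $\mathbb{F}_n \times \mathbb{F}_n$. Since $A = \bigstar_\Gamma \mathbb{Z}$, we have $C^*(A) \cong \bigstar_\Gamma C^*(\mathbb{Z})$ by the graph-product functoriality proposition, so it suffices to produce a QD model for this graph product via a homotopy. The idea is to construct two $*$-homomorphisms $\varphi, \psi: C^*(A) \to \mathcal{B}$ into a suitable target algebra, with $\varphi$ injective and $\psi(C^*(A))$ manifestly QD, and then invoke Voiculescu's homotopy invariance proposition (the version quoted just before the theorem).

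First I would fix, for each vertex $v \in V\Gamma$, a concrete unitary realizing the generator $s_v$ of the corresponding copy of $\mathbb{Z}$. The natural candidate is the bilateral shift (equivalently, the function $z$ on $\mathbb{T}$ acting by multiplication), so that each $C^*(\mathbb{Z}) \cong C(\mathbb{T})$ embeds faithfully; using the universal property of $C^*(\mathbb{Z})$ recorded above, any choice of unitary gives a $*$-homomorphism. The universal property of the graph product (Definition \ref{unidef}) then assembles these vertex maps into a single $*$-homomorphism on $\bigstar_\Gamma C^*(\mathbb{Z})$, provided the chosen unitaries commute according to $\Gamma$. For the injective map $\varphi$ I would use the faithful (e.g. left-regular or universal) realization; for the homotopic map $\psi$ the goal is to deform the unitaries along a norm-continuous path $t \mapsto u_v(t)$ (for instance rotating the shift toward a unitary with a single eigenvalue, or scaling toward a quasidiagonalizable model) so that at $t=1$ each $u_v(1)$ lands in a finite-dimensional or block-diagonalizable picture, while the commuting-according-to-$\Gamma$ relations are preserved along the entire path. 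Continuity of each path guarantees $\sigma_t(a)$ is norm-continuous, and preservation of the $\Gamma$-relations guarantees each $\sigma_t$ is a well-defined $*$-homomorphism on the graph product by the universal property.

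The main obstacle I expect is verifying that the endpoint image $\psi(C^*(A))$ is genuinely QD, and simultaneously keeping the commutation relations intact throughout the homotopy. Deforming a single shift into a quasidiagonalizable unitary is standard, but doing so for all vertices \emph{coherently}, so that edges of $\Gamma$ continue to commute at every time $t$, is the delicate point; the paper's citation to Brown-Ozawa's Proposition 7.4.5 suggests the model at $t=1$ should be built so that each $u_v(1)$ acts on a common finite-dimensional-approximable space with the right commuting structure, which is exactly where the graph $\Gamma$ enters. Once $\psi(C^*(A))$ is shown QD and $\varphi$ is faithful (hence injective), Voiculescu's homotopy invariance proposition yields that $C^*(A)$ itself is QD, completing the argument. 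I would also remark that this recovers the known cases $C^*(\mathbb{F}_n)$ and $C^*(\mathbb{F}_n \times \mathbb{F}_n)$ as the edgeless and two-clique instances, respectively.
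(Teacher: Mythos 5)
Your overall architecture---homotopy invariance plus the universal property of the graph product applied vertex-by-vertex---matches the paper's, but the plan stops exactly where the content of the proof lies, and the resolution you sketch points in the wrong direction. You flag as ``the main obstacle'' both (a) keeping the $\Gamma$-commutation relations intact along the homotopy and (b) showing the endpoint image is QD, and you propose to handle (b) by building a model at $t=1$ in which each $u_v(1)$ acts on a common finite-dimensional-approximable space with the right commuting structure. No such construction is needed, and attempting one would reimport essentially all of the difficulty of quasidiagonalizing the graph product in the first place. The paper's endpoint is simply the trivial representation $\sigma(s_v)=1_{\mathcal{H}}$ for every $v$: its image is $\mathbb{C}$, which is QD for free, and the constant unitaries commute according to any graph whatsoever.

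The remaining issue (a) is resolved by one observation your plan is missing: take the path from $\pi(s_v)$ to $1_{\mathcal{H}}$ inside the abelian von Neumann algebra $\pi(C^*(s_v))''$, e.g.\ $t\mapsto \exp\bigl((1-t)\log \pi(s_v)\bigr)$ via Borel functional calculus. (The logarithm of a unitary generally does not lie in the $C^*$-algebra it generates---think of $z\in C(\mathbb{T})$---which is why one must pass to the weak closure; this is the reason the paper works with $\pi(C^*(s_i))''$ rather than $\pi(C^*(s_i))$.) Since commutation of $\pi(C^*(s_v))$ and $\pi(C^*(s_w))$ passes to their weak closures, the paths attached to adjacent vertices commute at \emph{every} time $t$, so each $\sigma_t$ is a well-defined $*$-homomorphism on $\bigstar_\Gamma C^*(\mathbb{Z})$ by the universal property of the graph product. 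With $\pi$ faithful and $\sigma(C^*(A))=\mathbb{C}$ QD, the homotopy invariance proposition finishes the argument. Without this device, what you have is a correct outline whose central step is left unexecuted.
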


\begin{proof}
As mentioned above, this proof extends the proof of Proposition 7.4.5 in \cite{brownozawa}.  Say $A = \bigstar_\Gamma \mb{Z}$ with $V\Gamma = \left\{1, \dots, n\right\}$, and let $s_1, \dots, s_n$ be the generators for $A$ as in \eqref{raagdef}, identifying them with the generating unitaries in $C^*(A)$. Fix a faithful $*$-representation $\pi: C^*(A) \rightarrow B(\mc{H})$.  It is obvious that the commuting relations prescribed by $\Gamma$ for $s_1,\dots, s_n$ extend to the generated abelian von Neumann algebras $\pi(C^*(s_1))'',\dots, \pi(C^*(s_n))''$. Let $\sigma: C^*(A) \rightarrow B(\mc{H})$ denote the trivial representation; that is, $\sigma(s_i) = 1_\mc{H}$ for every $1 \leq i \leq n$. Now, for each $1 \leq i \leq n,$ there is a norm-continuous path of unitaries in $\pi(C^*(s_i))''$ from $\pi(s_i)$ to $1_\mc{H}$.  Let $\pi_i = \pi|_{C^*(s_i)}: C^*(s_i) \rightarrow \pi(C^*(s_i))''$ and $\sigma_i= \sigma|_{C^*(s_i)}: C^*(s_i) \rightarrow \pi(C^*(s_i))''$. Thanks to the universal property for $C^*(s_i) \cong C^*(\mb{Z})$, we have that $\pi_i$ and $\sigma_i$ are homotopic for $1 \leq i \leq n$. The universal property for graph products of $C^*$-algebras then tells us that since $\pi(C^*(s_1))'',\dots, \pi(C^*(s_n))''$ commute according to $\Gamma, \pi = \bigstar_\Gamma \pi_i$ and $\sigma = \bigstar_\Gamma \sigma_i$ are homotopic $*$-homomorphisms.  Since $\sigma(C^*(A)) = \mb{C}$ is QD, then by the above proposition, so is $\pi(C^*(A)) \cong C^*(A)$.  
\end{proof}

\noin Since subgroups of RAAGs are of major interest, we observe the following immediate corollary.

\begin{cor}
Let $A$ be a RAAG and $B \leq A$ be a subgroup. Then $B$ is QD.
\end{cor}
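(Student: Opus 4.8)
The plan is to deduce this corollary from Theorem \ref{raagsqd} together with the general fact that quasidiagonality passes to full group $C^*$-algebras of subgroups. The key observation is that if $B \leq A$ is a subgroup, then the inclusion $B \hookrightarrow A$ induces a canonical unital injective $*$-homomorphism $C^*(B) \rightarrow C^*(A)$. This injectivity is not automatic for arbitrary group homomorphisms, but here it follows from the universal property of the full group $C^*$-algebra: any unitary representation of $B$ extends (e.g.\ by inducing, or simply because $B$ sits inside $A$ and $A$ admits faithful representations restricting faithfully to $B$) to a representation of $A$, so the universal norm on $\mb{C}[B]$ agrees with the norm it inherits as a subalgebra of $C^*(A)$. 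Thus $C^*(B)$ embeds as a $C^*$-subalgebra of $C^*(A)$.

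First I would make this embedding precise: fix a faithful representation of $C^*(A)$ and note that its restriction to the copy of $\mb{C}[B]$ generated by the unitaries $\{u_b : b \in B\}$ has completion exactly $C^*(B)$, so $C^*(B) \hookrightarrow C^*(A)$ isometrically. Second, I would invoke Theorem \ref{raagsqd} to conclude that $C^*(A)$ is QD. The final step is the standard permanence fact that quasidiagonality is inherited by $C^*$-subalgebras: given a finite set $F \subset C^*(B)$ and $\varepsilon > 0$, view $F$ as a finite subset of $C^*(A)$, apply the defining completely positive contractive maps $\varphi: C^*(A) \rightarrow \mb{M}_n$ furnished by quasidiagonality of $C^*(A)$, and restrict $\varphi$ to $C^*(B)$. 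The restriction remains contractive and completely positive, the approximate multiplicativity $\|\varphi(ab)-\varphi(a)\varphi(b)\| < \varepsilon$ and the near-isometry $\|\varphi(a)\| > \|a\| - \varepsilon$ hold verbatim for $a,b \in F$, and since the inclusion is isometric the norm on the left is computed in $C^*(B)$. Hence $C^*(B)$ is QD, i.e.\ $B$ is QD.

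The only genuine subtlety, and the step I would treat most carefully, is the isometric embedding $C^*(B) \hookrightarrow C^*(A)$; everything downstream is routine permanence. One clean way to secure it is: every unitary representation of $B$ on a Hilbert space $\mc{H}$ induces a unitary representation of $A$ (for instance via the induced representation $\mathrm{Ind}_B^A$, which restricts to a multiple of the original representation of $B$), so for any element $x \in \mb{C}[B]$ the universal $C^*$-norm $\|x\|_{C^*(B)}$ is realized by some representation that extends to $A$, giving $\|x\|_{C^*(B)} \leq \|x\|_{C^*(A)}$; the reverse inequality is immediate since representations of $A$ restrict to representations of $B$. Therefore the two norms coincide and the inclusion is isometric, completing the argument.
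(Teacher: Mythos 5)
Your proof is correct and follows the same route as the paper, which simply cites the two facts that $C^*(B) \subset C^*(A)$ and that quasidiagonality passes to $C^*$-subalgebras. Your extra care in justifying the isometric embedding $C^*(B) \hookrightarrow C^*(A)$ via induced representations is a valid filling-in of a detail the paper leaves implicit, not a different argument.
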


\begin{proof}
This follows from the facts that $C^*(B) \subset C^*(A)$ and quasidiagonality is preserved under taking subalgebras.
\end{proof}

A consequence of quasidiagonality is the existence of an amenable trace, defined as follows.

\begin{dfn}\cite{invar}\label{amendef}
Let $\mc{A}$ be a unital $C^*$-algebra, and let $T(\mc{A})$ denote the collection of all tracial states on $\mc{A}$.  A trace $\tau \in T(\mc{A})$ is \emph{amenable} if there exists a sequence of u.c.p. maps $\varphi_k: \mc{A} \rightarrow \mb{M}_{n(k)}$ such that $\ds \lim_{k\rightarrow \infty} ||\varphi_k(ab) - \varphi_k(a)\varphi_k(b)||_{2,\text{tr}_{n(k)}} = 0$ and $\ds \tau(a) = \lim_{k\rightarrow \infty} \text{tr}_{n(k)} \circ \varphi_k(a)$ for every $a,b \in \mc{A}$.  Let $\text{AT}(\mc{A})$ denote the set of amenable traces on $\mc{A}$.
\end{dfn}

\noin So clearly we have the following corollary.

\begin{cor}
Let $A$ be a RAAG and $B \leq A$ be a subgroup.  Then $C^*(B)$ has a (non-trivial) amenable trace.
\end{cor}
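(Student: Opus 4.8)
The plan is to reduce everything to the previous corollary together with the standard fact that a unital quasidiagonal $C^*$-algebra always carries an amenable trace. First I would invoke the preceding corollary to conclude that $C^*(B)$ is QD, using that $C^*(B) \subset C^*(A)$ and that quasidiagonality passes to $C^*$-subalgebras. The remaining, and essentially formal, task is to manufacture an amenable trace directly from the definition of quasidiagonality and Definition \ref{amendef}.

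Here is how I would produce the trace. Quasidiagonality of $C^*(B)$ supplies, for each finite $F \subset C^*(B)$ and each $\varepsilon > 0$, a contractive completely positive map $\varphi: C^*(B) \to \mb{M}_n$ that is approximately multiplicative and approximately isometric on $F$ in operator norm. Exhausting $C^*(B)$ by an increasing sequence of finite sets while sending $\varepsilon \to 0$, I obtain a sequence of c.c.p. maps $\varphi_k: C^*(B) \to \mb{M}_{n(k)}$ that are asymptotically multiplicative and asymptotically isometric in operator norm. Since approximate multiplicativity makes $\varphi_k(1)$ an approximate projection of norm close to $1$, the standard device of compressing by a spectral projection of $\varphi_k(1)$ and rescaling lets me replace the $\varphi_k$ by nearby unital completely positive maps; so I may assume the $\varphi_k$ are u.c.p. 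Fixing a free ultrafilter $\mc{U}$, I then define $\tau(a) := \lim_{\mc{U}} \text{tr}_{n(k)}(\varphi_k(a))$.

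It remains to verify that $\tau$ is a tracial state witnessing amenability. As a pointwise ultralimit of the states $\text{tr}_{n(k)} \circ \varphi_k$, the functional $\tau$ is itself a state; in particular $\tau(1) = 1$, so it is non-trivial. For traciality I would note that operator-norm approximate multiplicativity of the $\varphi_k$ immediately gives approximate multiplicativity in the normalized Hilbert--Schmidt norm, since $\|x\|_{2,\text{tr}_{n(k)}} \le \|x\|$; combining this with the Cauchy--Schwarz bound $|\text{tr}_{n(k)}(z)| \le \|z\|_{2,\text{tr}_{n(k)}}$ and the trace property of $\text{tr}_{n(k)}$ on matrices yields $\tau(ab) = \lim_{\mc{U}} \text{tr}_{n(k)}(\varphi_k(a)\varphi_k(b)) = \lim_{\mc{U}} \text{tr}_{n(k)}(\varphi_k(b)\varphi_k(a)) = \tau(ba)$. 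The very same maps $\varphi_k$ are exactly the u.c.p. maps required in Definition \ref{amendef}, so $\tau$ is amenable.

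The only point requiring care is the bookkeeping that connects the two definitions: passing from the contractive c.p. maps of quasidiagonality to genuinely unital c.p. maps (via the compression above rather than a naive rescaling, since $\varphi_k(1)$ need not be invertible), and checking that the operator-norm estimates built into quasidiagonality feed correctly into the Hilbert--Schmidt estimates demanded by amenability. Once this is in place the argument is routine, which is why the statement follows \tql clearly\tqr; alternatively one could simply cite the general fact that unital QD $C^*$-algebras admit amenable traces, for instance from Brown's memoir \cite{invar}.
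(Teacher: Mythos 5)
Your proposal is correct and follows exactly the route the paper intends: the paper offers no explicit proof beyond ``so clearly,'' relying on the preceding corollary that $C^*(B)$ is QD together with the standard fact (from \cite{invar}) that a unital quasidiagonal $C^*$-algebra admits an amenable trace, which is precisely the fact you prove in detail via the compression-to-u.c.p. and ultralimit-of-traces argument. The only caveat is that, like the paper, you do not really address what ``non-trivial'' should mean beyond $\tau(1)=1$, but since the paper itself leaves this unexamined your write-up matches its level of justification.
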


Now that we know the set of amenable traces on a RAAG is nonempty, we wish to further analyze such amenable traces on RAAGs.  In \cite{invar}, Brown studies amenable traces and specific subclasses of amenable traces with stronger approximation properties.  A subclass of interest for this article is the following.

\begin{dfn}[\cite{invar}]\label{lfddef}
A trace $\tau \in T(\mc{A})$ is called \emph{locally finite dimensional} if there exist u.c.p maps $\varphi_k: \mc{A} \rightarrow \mb{M}_{n(k)}$ such that $\text{tr}_{n(k)} \circ \varphi_k \rightarrow \tau$ in the weak-$*$ topology and $\ds \lim_{k\rightarrow \infty} d(a, \mc{A}_{\varphi_k}) = 0$ for every $a \in \mc{A}$.  Here $\ds d(a, \mc{A}_{\varphi_k}) = \inf_{b \in \mc{A}_{\varphi_k}} ||a - b||$ and $\mc{A}_{\varphi_k}$ is the multiplicative domain of $\varphi_k$.  Let $\text{AT}_\text{LFD}(\mc{A})$ denote the set of locally finite dimensional traces on $\mc{A}$.
\end{dfn}

Consider the following fact for free groups.

\begin{thm}[\cite{invar}]\label{fnlfd}
$\text{AT}(C^*(\mb{F}_n)) = \text{AT}_\text{LFD}(C^*(\mb{F}_n))$.
\end{thm}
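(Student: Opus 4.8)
The plan is to prove the nontrivial inclusion $\mathrm{AT}(C^*(\mb{F}_n)) \subseteq \mathrm{AT}_\text{LFD}(C^*(\mb{F}_n))$, since the reverse containment holds for any unital $C^*$-algebra directly from the definitions (a locally finite dimensional trace is in particular amenable, as the u.c.p.\ maps $\varphi_k$ witnessing local finite dimensionality are automatically asymptotically multiplicative on the whole algebra by the multiplicative-domain condition). So let $\tau \in \mathrm{AT}(C^*(\mb{F}_n))$ be an arbitrary amenable trace; I want to produce u.c.p.\ maps $\psi_k : C^*(\mb{F}_n) \to \mb{M}_{m(k)}$ with $\mathrm{tr}_{m(k)} \circ \psi_k \to \tau$ weak-$*$ and $d(a, (C^*(\mb{F}_n))_{\psi_k}) \to 0$ for every $a$.

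First I would use the amenability of $\tau$ together with tracial stability. Amenability of $\tau$ gives u.c.p.\ maps $\varphi_k : C^*(\mb{F}_n) \to \mb{M}_{n(k)}$ that are asymptotically multiplicative in $\|\cdot\|_{2,\mathrm{tr}}$ and with $\mathrm{tr}_{n(k)} \circ \varphi_k \to \tau$. Passing to the tracial ultraproduct $(\mb{M}_{n(k)}, \mathrm{tr}_{n(k)})^\mc{U}$, the sequence $(\varphi_k)$ assembles (after compressing by the asymptotic multiplicativity) into an honest unital $*$-homomorphism $\pi : C^*(\mb{F}_n) \to (\mb{M}_{n(k)}, \mathrm{tr}_{n(k)})^\mc{U}$, recovering $\tau$ as the pullback of the ultraproduct trace. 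Now the key input: $C^*(\mb{F}_n) \cong \bigstar_\Gamma C^*(\mb{Z})$ where $\Gamma$ is the edgeless graph on $n$ vertices. The edgeless graph lies in the pincushion class $\ms{G}^{(\infty)}$ (every vertex is isolated), and each vertex algebra $C^*(\mb{Z}) \cong C(\mb{T})$ is commutative, hence $\ms{C}$-tracially stable by Theorem \ref{commts} for $\ms{C} = \textbf{matrix}$. Therefore by Theorem \ref{pcts} (or simply Proposition \ref{freeprop} applied to the free product directly), $C^*(\mb{F}_n)$ is \textbf{matrix}-tracially stable, so $\pi$ is approximately liftable: there are honest unital $*$-homomorphisms $\pi_k : C^*(\mb{F}_n) \to \mb{M}_{n(k)}$ with $\pi_k(b_j) \to \pi(b_j)$ in $\|\cdot\|_{2,\mathrm{tr}}$ on the generators.

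Next I would verify that the lifted maps $\pi_k$ are the desired witnesses for local finite dimensionality. Because each $\pi_k$ is an honest $*$-homomorphism, its multiplicative domain is all of $C^*(\mb{F}_n)$, so $d(a, (C^*(\mb{F}_n))_{\pi_k}) = 0$ trivially for every $a$ and every $k$; this is precisely the local-finite-dimensionality distance condition in the strongest possible form. It remains to check the trace-convergence $\mathrm{tr}_{n(k)} \circ \pi_k \to \tau$ weak-$*$. This follows because $\pi_k$ agrees with $\pi$ in the ultraproduct $\|\cdot\|_2$-norm on generators, hence on a dense $*$-subalgebra, and the ultraproduct trace of $\pi$ is $\tau$ by construction; so along the ultrafilter $\lim_\mc{U} \mathrm{tr}_{n(k)}(\pi_k(a)) = \tau(a)$ for all $a$, and after passing to a subsequence in $\mc{U}$ one gets genuine weak-$*$ convergence to $\tau$.

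The main obstacle I anticipate is the \emph{assembly step}: turning the merely asymptotically multiplicative u.c.p.\ maps $\varphi_k$ coming from amenability into a genuine $*$-homomorphism $\pi$ into the ultraproduct while simultaneously preserving the trace $\tau$. One must confirm that asymptotic multiplicativity in $\|\cdot\|_{2,\mathrm{tr}}$ (not in operator norm) is exactly what is needed for the ultraproduct map to be multiplicative, and that unitality and $*$-preservation survive the limit; this is where the precise choice of norm in Definition \ref{amendef} matters and must be matched to the Hilbert-Schmidt norm of the tracial ultraproduct. Once $\pi$ is in hand, the rest is a clean application of Theorem \ref{pcts} via Lemma \ref{techlemma}, and the conclusion that honest $*$-homomorphic lifts have full multiplicative domain makes the local-finite-dimensionality verification essentially automatic.
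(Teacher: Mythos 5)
Your proposal is correct and follows essentially the same route the paper takes: it converts the amenable trace into a unital $*$-homomorphism into a matrix ultraproduct (the paper invokes Theorem 3.1.7 of \cite{invar} for this step, which is the assembly you describe), then uses the \textbf{matrix}-tracial stability of $C^*(\mb{F}_n) \cong *_n\, C(\mb{T})$ to lift to honest $*$-homomorphisms $\pi_k$ into matrix algebras, whose full multiplicative domains immediately witness local finite dimensionality. The only cosmetic difference is that you get tracial stability from Proposition \ref{freeprop} and Theorem \ref{commts} directly rather than through the general Theorem \ref{pcts}, which is a perfectly fine shortcut for the edgeless graph.
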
 

\noin This result also clearly holds for any abelian RAAG (i.e. $\mb{Z}^n$); thus it is reasonable to ask if such a result holds for any RAAG.  Theorem \ref{fnlfd} is proved in \cite{invar} essentially by using the fact that $C^*(\mb{F}_n)$ is tracially stable for the class of finite factor von Neumann algebras.  Thus, applying Theorem \ref{pcts} yields the following generalization of Theorem \ref{fnlfd} that includes much more than just pincushion RAAGs--we thank Pieter Spaas for suggesting that the proof applies to such a general setting.

\begin{thm}
If $\mc{A}$ is a unital separable \textbf{matrix}-tracially stable $C^*$-algebra, then $\text{AT}(\mc{A}) = \text{AT}_\text{LFD}(\mc{A})$.
\end{thm}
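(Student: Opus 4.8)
The plan is to show the two inclusions $\text{AT}_\text{LFD}(\mc{A}) \subset \text{AT}(\mc{A})$ and $\text{AT}(\mc{A}) \subset \text{AT}_\text{LFD}(\mc{A})$. The first inclusion holds for any unital $C^*$-algebra, since every locally finite dimensional trace is in particular amenable: the defining u.c.p.\ maps $\varphi_k$ witness amenability once we observe that asymptotic multiplicativity in the $||\cdot||_{2,\text{tr}_{n(k)}}$ norm follows from the fact that elements are asymptotically well-approximated by elements of the multiplicative domain $\mc{A}_{\varphi_k}$, on which $\varphi_k$ is exactly multiplicative. So the content is entirely in the reverse inclusion, and the hypothesis of \textbf{matrix}-tracial stability is what drives it.

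For the reverse inclusion, I would start with an arbitrary $\tau \in \text{AT}(\mc{A})$ and use Definition \ref{amendef} to obtain u.c.p.\ maps $\varphi_k: \mc{A} \rightarrow \mb{M}_{n(k)}$ that are asymptotically multiplicative in $||\cdot||_{2,\text{tr}_{n(k)}}$ and satisfy $\text{tr}_{n(k)} \circ \varphi_k \to \tau$ weak-$*$. The key idea is to pass to the tracial ultraproduct: setting $\tau_k := \text{tr}_{n(k)}$, the sequence $\{\varphi_k\}$ assembles into a map $\Phi: \mc{A} \rightarrow (\mb{M}_{n(k)}, \text{tr}_{n(k)})^\mc{U}$ by $\Phi(a) = (\varphi_k(a))_\mc{U}$, and asymptotic multiplicativity in the $2$-norm is exactly the statement that $\Phi$ is a $*$-homomorphism into the ultraproduct. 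Moreover $\Phi$ is unital and the ultraproduct trace recovers $\tau$, i.e.\ $\tau = \text{tr}^\mc{U} \circ \Phi$.

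Now I would invoke \textbf{matrix}-tracial stability of $\mc{A}$: since each $\mb{M}_{n(k)}$ lies in the class \textbf{matrix}, the $*$-homomorphism $\Phi$ is approximately liftable. This produces, along some $E \in \mc{U}$, honest unital $*$-homomorphisms $\psi_k: \mc{A} \rightarrow \mb{M}_{n(k)}$ with $||\psi_k(a) - \varphi_k(a)||_{2,\text{tr}_{n(k)}} \to 0$ along $\mc{U}$ for each $a$. These $\psi_k$ are the desired witnesses that $\tau$ is locally finite dimensional: being $*$-homomorphisms, each $\psi_k$ has multiplicative domain equal to all of $\mc{A}$, so trivially $d(a, \mc{A}_{\psi_k}) = 0$; and since $\psi_k$ agrees with $\varphi_k$ asymptotically in $2$-norm, we still have $\text{tr}_{n(k)} \circ \psi_k \to \tau$ weak-$*$. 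A routine diagonalization over a countable dense subset of $\mc{A}$ (using separability) converts the ultrafilter-indexed liftings into an ordinary sequence along $\mb{N}$, which is the form required by Definition \ref{lfddef}.

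The main obstacle is ensuring that the lifted maps $\psi_k$ can simultaneously be taken to be \emph{unital $*$-homomorphisms} (so that their multiplicative domains are everything, forcing $d(a,\mc{A}_{\psi_k})=0$) \emph{and} to recover the weak-$*$ limit $\tau$ on the nose. Both of these are delivered by the precise form of approximate liftability in Lemma \ref{techlemma}: the liftings are genuine $*$-homomorphisms matching the original map in $2$-norm on any finite set of generators, which is enough to preserve the trace in the limit. The only care needed is the passage from the ultrafilter $\mc{U}$ back to a sequence indexed by $\mb{N}$, handled by separability and a standard diagonal argument, together with checking that weak-$*$ convergence of the composed traces survives this passage.
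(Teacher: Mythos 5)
Your proposal is correct and follows essentially the same route as the paper: realize the amenable trace as $\tau = \mathrm{tr}^{\mc{U}}\circ \Phi$ for a unital $*$-homomorphism $\Phi$ into a tracial ultraproduct of matrix algebras, apply \textbf{matrix}-tracial stability to lift $\Phi$ to honest $*$-homomorphisms $\psi_k:\mc{A}\to\mb{M}_{n(k)}$, and note that these have full multiplicative domain and recover $\tau$. The only (cosmetic) difference is that you build $\Phi$ directly from the u.c.p.\ maps in the definition of amenable trace, where the paper instead cites Brown's Theorem 3.1.7 to get a map into $R^{\mc{U}}$ and then approximates into $(\mb{M}_{n(k)})^{\mc{U}}$; your version also spells out the ultrafilter-to-sequence passage that the paper leaves implicit.
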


\begin{proof}
We generalize the argument for the proof of Proposition 4.1.14 in \cite{invar}.  Let $\tau \in \text{AT}(\mc{A})$.  By Theorem 3.1.7 of \cite{invar}, there is a $*$-homomorphism $\pi: \mc{A} \rightarrow R^\mc{U}$ ($R$ denotes the separably acting hyperfinite II$_1$-factor) such that $\tau = \tau_\mc{U} \circ \pi$.  By standard approximation arguments, we can assume that $\pi$ is a $*$-homomorphism into the ultraproduct $(\mb{M}_{n(k)})^\mc{U} \subset R^\mc{U}$ where $\mb{M}_{n(k)}$ is a unital matrix subalgebra of $R$.  Since $\mc{A}$ is \textbf{matrix}-tracially stable, we have that \[\pi: \mc{A} \rightarrow (\mb{M}_{n(k)})^\mc{U} \subset R^\mc{U}\] is approximately liftable.  So there is a sequence of $*$-homomorphisms $\pi_k: \mc{A}  \rightarrow \mb{M}_{n(k)}$ such that $(\pi_k(a))_\mc{U} = \pi(a)$ for every $a \in \mc{A}$.  Then it is clear to see that $\tau$ satisfies Definition \ref{lfddef}.
\end{proof}

\begin{cor}
If $\Gamma \in \ms{G}^{(\infty)}$, then $\text{AT}(C^*(\bigstar_\Gamma \mb{Z})) = \text{AT}_\text{LFD}(C^*(\bigstar_\Gamma \mb{Z}))$.
\end{cor}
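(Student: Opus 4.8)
The plan is to verify that $C^*(\bigstar_\Gamma \mb{Z})$ meets the hypotheses of the preceding theorem, namely that it is unital, separable, and \textbf{matrix}-tracially stable; the equality $\text{AT} = \text{AT}_\text{LFD}$ then follows immediately. First I would rewrite the algebra via the functoriality of graph products under $C^*(\bullet)$, obtaining $C^*(\bigstar_\Gamma \mb{Z}) \cong \bigstar_\Gamma C^*(\mb{Z})$, and recall that each vertex algebra is $C^*(\mb{Z}) \cong C(\mb{T})$, which is separable and commutative. Since $\Gamma$ is a finite graph, the group $\bigstar_\Gamma \mb{Z}$ is finitely generated, so $C^*(\bigstar_\Gamma \mb{Z})$ is unital and separable.

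The heart of the argument is a single application of Theorem \ref{pcts}. A small but essential point is that one should \emph{not} take the class to be \textbf{matrix} directly, since \textbf{matrix} fails to be closed under direct sums and hence does not satisfy the standing hypothesis on $\ms{C}$. Instead I would apply Theorem \ref{pcts} with $\ms{C} = \textbf{fvNa}$, the class of finite von Neumann algebras, which (as recorded in the earlier remark) is a class of real-rank-zero $C^*$-algebras closed under direct sums and unital corners. The hypotheses of Theorem \ref{pcts} are then met trivially for every vertex of $\Gamma$: each vertex algebra equals the commutative algebra $C(\mb{T})$, so condition (3) is satisfied by any vertex that is neither isolated nor a pin separated from other pins, while for the remaining isolated vertices and pins, conditions (1) and (2) demand \textbf{fvNa}-tracial stability, which Theorem \ref{commts} supplies for any separable unital commutative algebra over a class of real-rank-zero algebras. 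Hence $\bigstar_\Gamma C^*(\mb{Z})$ is \textbf{fvNa}-tracially stable, and by the remark this entails \textbf{matrix}-tracial stability.

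With \textbf{matrix}-tracial stability, separability, and unitality in hand, the preceding theorem applies verbatim and yields $\text{AT}(C^*(\bigstar_\Gamma \mb{Z})) = \text{AT}_\text{LFD}(C^*(\bigstar_\Gamma \mb{Z}))$. I expect no genuine obstacle in this argument; the only subtlety worth flagging is the passage through \textbf{fvNa} rather than \textbf{matrix}, which is forced precisely because Theorem \ref{pcts} requires the ambient class to be closed under direct sums, a property enjoyed by \textbf{fvNa} but not by \textbf{matrix}.
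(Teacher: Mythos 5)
Your proposal is correct and follows essentially the same route as the paper: identify $C^*(\bigstar_\Gamma \mb{Z})$ with $\bigstar_\Gamma C(\mb{T})$, apply Theorem \ref{pcts} with the class \fvna\ (whose vertex hypotheses are all satisfied since every vertex algebra is separable, unital, and commutative), and deduce \textbf{matrix}-tracial stability so that the preceding theorem applies. The paper's proof is a one-line version of exactly this; your explicit remark that one must route through \fvna\ rather than \textbf{matrix} itself is a correct and worthwhile clarification of a point the paper leaves implicit.
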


\begin{proof}
This follows from the fact that \[C^*(\bigstar_\Gamma \mb{Z}) \cong \bigstar_\Gamma C^*(\mb{Z}) \cong \bigstar_\Gamma C(\mb{T})\] is \fvna-tracially stable by Theorem \ref{pcts}.
\end{proof}

While the pincushion class of finite simplicial graphs is a large subclass that includes complete graphs, trees, and many more, it is still of significant interest to extend the results of this article to all finite simplicial graphs. For instance, the smallest non-pincushion graph is the square.
\begin{center}
\begin{tikzpicture}
\path (0,0) node [draw,shape=circle,fill=black] (p0) {}
(0,2) node [draw,shape=circle,fill=black]  (p1) {}
(2,2) node [draw,shape=circle,fill=black] (p2) {}
(2,0) node [draw,shape=circle,fill=black] (p3) {};
\draw (p0) -- (p1)
(p1) -- (p2)
(p2) -- (p3)
(p3) -- (p0);
\end{tikzpicture}
\end{center}
Thus we wish to resolve the following question.

\begin{?}
Let $\ms{C}$ be a class of \rrz $C^*$-algebras closed under taking direct sums and unital corners, and let $\Gamma$ be a finite simplicial graph.  For each $v \in V\Gamma$, let $X_v$ be a compact Hausdorff space; is $\bigstar_\Gamma C(X_v)$ $\ms{C}$-tracially stable?
\end{?}

\bibliographystyle{plain}
\bibliography{gpqdbib}{}

\def\lfhook#1{\setbox0=\hbox{#1}{\ooalign{\hidewidth
  \lower1.5ex\hbox{'}\hidewidth\crcr\unhbox0}}}
\begin{thebibliography}{10}

\bibitem{agol}
I.~Agol.
\newblock The virtual {H}aken conjecture.
\newblock {\em Doc. Math.}, 18:1045--1087, 2013.
\newblock With an appendix by Agol, Daniel Groves, and Jason Manning.

\bibitem{arzpau}
G.~Arzhantseva and L.~P\u{a}unescu.
\newblock Almost commuting permutations are near commuting permutations.
\newblock {\em J. Funct. Anal.}, 269(3):745--757, 2015.

\bibitem{saa}
S.~Atkinson.
\newblock Convex sets associated to {$C^*$}-algebras.
\newblock {\em J. Funct. Anal.}, 271(6):1604--1651, 2016.

\bibitem{gpucp}
S.~Atkinson.
\newblock Graph products of completely positive maps.
\newblock To appear in Journal of Operator Theory, arXiv:1706.07389.

\bibitem{gphaag}
S.~Atkinson.
\newblock On graph products of multipliers and the {H}aagerup property for
  {$C^*$}-dynamical systems.
\newblock Submitted, arXiv:1803.01881.

\bibitem{baudisch}
A.~Baudisch.
\newblock Subgroups of semifree groups.
\newblock {\em Acta Math. Acad. Sci. Hungar.}, 38(1-4):19--28, 1981.

\bibitem{beclub}
O.~Becker and A.~Lubotzky.
\newblock Group stability and property ({T}).
\newblock preprint, arXiv:1809.00632v1.

\bibitem{beluth}
O.~Becker, A.~Lubotzky, and A.~Thom.
\newblock Stability and invariant random subgroups.
\newblock preprint, arXiv:1801.08381.

\bibitem{boca2}
F.~Boca.
\newblock A note on full free product {$C^*$}-algebras, lifting and
  quasidiagonality.
\newblock In {\em Operator theory, operator algebras and related topics
  ({T}imi\c soara, 1996)}, pages 51--63. Theta Found., Bucharest, 1997.

\bibitem{invar}
N.~P. Brown.
\newblock Invariant means and finite representation theory of {$C^*$}-algebras.
\newblock {\em Mem. Amer. Math. Soc.}, 184(865):viii+105, 2006.

\bibitem{brownozawa}
N.~P. Brown and N.~Ozawa.
\newblock {\em {$C^*$}-algebras and finite-dimensional approximations},
  volume~88 of {\em Graduate Studies in Mathematics}.
\newblock American Mathematical Society, Providence, RI, 2008.

\bibitem{casfim}
M.~Caspers and P.~Fima.
\newblock Graph products of operator algebras.
\newblock {\em J. Noncommut. Geom.}, 11(1):367--411, 2017.

\bibitem{charney}
R.~Charney.
\newblock An introduction to right-angled {A}rtin groups.
\newblock {\em Geom. Dedicata}, 125:141--158, 2007.

\bibitem{chiswell}
I.~M. Chiswell.
\newblock Right-angled {C}oxeter groups.
\newblock In {\em Low-dimensional topology and {K}leinian groups
  ({C}oventry/{D}urham, 1984)}, volume 112 of {\em London Math. Soc. Lecture
  Note Ser.}, pages 297--304. Cambridge Univ. Press, Cambridge, 1986.

\bibitem{choi}
M.~D. Choi.
\newblock The full {$C^{\ast} $}-algebra of the free group on two generators.
\newblock {\em Pacific J. Math.}, 87(1):41--48, 1980.

\bibitem{choicomm}
M.~D. Choi.
\newblock Almost commuting matrices need not be nearly commuting.
\newblock {\em Proc. Amer. Math. Soc.}, 102(3):529--533, 1988.

\bibitem{davicomm}
K.~R. Davidson.
\newblock Almost commuting {H}ermitian matrices.
\newblock {\em Math. Scand.}, 56(2):222--240, 1985.

\bibitem{deglluth}
M.~De~{C}hiffre, L.~Glebsky, A.~Lubotzky, and A.~Thom.
\newblock stability, cohomology vanishing, and non-approximable groups.
\newblock preprint, arXiv:1711.10238.

\bibitem{droms3}
C.~Droms.
\newblock Graph groups, coherence, and three-manifolds.
\newblock {\em J. Algebra}, 106(2):484--489, 1987.

\bibitem{droms2}
C.~Droms.
\newblock Isomorphisms of graph groups.
\newblock {\em Proc. Amer. Math. Soc.}, 100(3):407--408, 1987.

\bibitem{droms1}
C.~Droms.
\newblock Subgroups of graph groups.
\newblock {\em J. Algebra}, 110(2):519--522, 1987.

\bibitem{elegra}
G.~Elek and \L. Grabowski.
\newblock Almost commuting matrices and the effective nullstellensatz.
\newblock preprint, arXiv:1708.05338.

\bibitem{exelor}
R.~Exel and T.~Loring.
\newblock Almost commuting unitary matrices.
\newblock {\em Proc. Amer. Math. Soc.}, 106(4):913--915, 1989.

\bibitem{filkac}
F.~Filonov and I.~Kachkovskiy.
\newblock A {H}ilbert-{S}chmidt analog of {H}uaxin {L}in's {T}heorem.
\newblock preprint, arXiv:1008.4002.

\bibitem{filsaf}
N.~Filonov and Y.~Safarov.
\newblock On the relation between an operator and its self-commutator.
\newblock {\em J. Funct. Anal.}, 260(10):2902--2932, 2011.

\bibitem{friror}
P.~Friis and M.~R\o~rdam.
\newblock Almost commuting self-adjoint matrices---a short proof of {H}uaxin
  {L}in's theorem.
\newblock {\em J. Reine Angew. Math.}, 479:121--131, 1996.

\bibitem{glebsky}
L.~Glebsky.
\newblock Almost commuting matrices with respect to normalized
  {H}ilbert-{S}chmidt norm.
\newblock preprint, arXiv:1002.3082.

\bibitem{green}
E.~R. Green.
\newblock {\em Graph Products of Groups}.
\newblock PhD thesis, University of Leeds, 1990.

\bibitem{hadros}
D.~Hadwin.
\newblock Strongly quasidiagonal {$C^*$}-algebras.
\newblock {\em J. Operator Theory}, 18(1):3--18, 1987.
\newblock With an appendix by Jonathan Rosenberg.

\bibitem{hadwin}
D.~Hadwin.
\newblock Free entropy and approximate equivalence in von {N}eumann algebras.
\newblock In {\em Operator algebras and operator theory ({S}hanghai, 1997)},
  volume 228 of {\em Contemp. Math.}, pages 111--131. Amer. Math. Soc.,
  Providence, RI, 1998.

\bibitem{hadwin-li}
D.~Hadwin and W.~Li.
\newblock A note on approximate liftings.
\newblock {\em Oper. Matrices}, 3(1):125--143, 2009.

\bibitem{hadshu}
D.~Hadwin and T.~Shulman.
\newblock Tracial {S}tability for {C}*-{A}lgebras.
\newblock {\em Integral Equations Operator Theory}, 90(1):90:1, 2018.

\bibitem{hadshu2}
D.~Hadwin and T.~Shulman.
\newblock Stability of group relations under small {H}ilbert-{S}chmidt
  perturbations.
\newblock preprint, arXiv:1706.08405.

\bibitem{hagwis}
F.~Haglund and D.~T. Wise.
\newblock Special cube complexes.
\newblock {\em Geom. Funct. Anal.}, 17(5):1551--1620, 2008.

\bibitem{halmos}
P.~R. Halmos.
\newblock Some unsolved problems of unknown depth about operators on {H}ilbert
  space.
\newblock {\em Proc. Roy. Soc. Edinburgh Sect. A}, 76(1):67--76, 1976/77.

\bibitem{jung}
K.~Jung.
\newblock Amenability, tubularity, and embeddings into {$R^\omega$}.
\newblock {\em Math. Ann.}, 338(1):241--248, 2007.

\bibitem{kacsaf}
I.~Kachkovskiy and Y.~Safarov.
\newblock Distance to normal elements in {$C^*$}-algebras of real rank zero.
\newblock {\em J. Amer. Math. Soc.}, 29(1):61--80, 2016.

\bibitem{li2}
B.~Li.
\newblock Regular dilation on graph products of {$\Bbb{N}$}.
\newblock {\em J. Funct. Anal.}, 273(2):799--835, 2017.

\bibitem{lin2}
H.~Lin.
\newblock Almost commuting selfadjoint matrices and applications.
\newblock In {\em Operator algebras and their applications ({W}aterloo, {ON},
  1994/1995)}, volume~13 of {\em Fields Inst. Commun.}, pages 193--233. Amer.
  Math. Soc., Providence, RI, 1997.

\bibitem{loringcomm1}
T.~A. Loring.
\newblock {$K$}-theory and asymptotically commuting matrices.
\newblock {\em Canad. J. Math.}, 40(1):197--216, 1988.

\bibitem{loringcomm2}
T.~A. Loring.
\newblock When matrices commute.
\newblock {\em Math. Scand.}, 82(2):305--319, 1998.

\bibitem{mlot}
W.~M\l{}otkowski.
\newblock {$\Lambda$}-free probability.
\newblock {\em Infin. Dimens. Anal. Quantum Probab. Relat. Top.}, 7(1):27--41,
  2004.

\bibitem{spenic}
A.~Nica and R.~Speicher.
\newblock {\em Lectures on the combinatorics of free probability}, volume 335
  of {\em London Mathematical Society Lecture Note Series}.
\newblock Cambridge University Press, Cambridge, 2006.

\bibitem{rec}
E.~Reckwerdt.
\newblock Weak amenability is stable under graph products.
\newblock {\em J. Lond. Math. Soc. (2)}, 96(1):133--155, 2017.

\bibitem{spewys}
R.~Speicher and J.~Wysocza\'nski.
\newblock Mixtures of classical and free independence.
\newblock {\em Arch. Math. (Basel)}, 107(4):445--453, 2016.

\bibitem{thom}
A.~Thom.
\newblock Finitary approximations of groups and their applications.
\newblock preprint, arXiv:1712.01052.

\bibitem{tww}
A.~Tikuisis, S.~White, and W.~Winter.
\newblock Quasidiagonality of nuclear {$C^\ast$}-algebras.
\newblock {\em Ann. of Math. (2)}, 185(1):229--284, 2017.

\bibitem{voicheis}
D.~V. Voiculescu.
\newblock Remarks on the singular extension in the {$C^{\ast} $}-algebra of the
  {H}eisenberg group.
\newblock {\em J. Operator Theory}, 5(2):147--170, 1981.

\bibitem{voiccomm}
D.~V. Voiculescu.
\newblock Asymptotically commuting finite rank unitary operators without
  commuting approximants.
\newblock {\em Acta Sci. Math. (Szeged)}, 45(1-4):429--431, 1983.

\bibitem{voicqd}
D.~V. Voiculescu.
\newblock A note on quasi-diagonal {$C^*$}-algebras and homotopy.
\newblock {\em Duke Math. J.}, 62(2):267--271, 1991.

\bibitem{wise}
D.~T. Wise.
\newblock {\em From riches to raags: 3-manifolds, right-angled {A}rtin groups,
  and cubical geometry}, volume 117 of {\em CBMS Regional Conference Series in
  Mathematics}.
\newblock Published for the Conference Board of the Mathematical Sciences,
  Washington, DC; by the American Mathematical Society, Providence, RI, 2012.

\end{thebibliography}

\end{document}